\documentclass[notitlepage,reqno,11pt]{amsart}
\usepackage{latexsym,amssymb, epsfig, amsmath,amsfonts, subfigure,amsthm}

\usepackage{rotating}
\usepackage[toc,page]{appendix}
\usepackage{color}
\usepackage{multirow}
\usepackage{relsize}
\usepackage{microtype}

\usepackage{bm}

\usepackage[colorlinks, allcolors=blue]{hyperref}

\usepackage[margin=1in]{geometry}


\raggedbottom

\numberwithin{equation}{section}
 \newtheorem{assumption}{Assumption}[section]
\newtheorem{lemma}{Lemma}[section]
\newtheorem{theorem}{Theorem}[section]

\newtheorem{coro}{Corollary}[section]

\newtheorem{remark}{Remark}[section]

\newlength{\defbaselineskip}
\setlength{\defbaselineskip}{\baselineskip}
\newcommand{\setlinespacing}[1]%
           {\setlength{\baselineskip}{#1 \defbaselineskip}}
\newcommand{\doublespacing}{\setlength{\baselineskip}%
                           {1 \defbaselineskip}}

\newcommand{\RR}{{\mathbb R}}

\def\E{\mathbb{E}}
\def\P{\mathbb{P}}

\newcommand{\sF}{{\mathcal{F}}}

\newcommand{\ep}{\epsilon}

\newcommand{\eps}  {\varepsilon}

\newcommand{\R}  {\mathbb{R}}
\newcommand{\N}  {\mathbb{N}}

\newcommand{\bone}{{\mathbf 1}}

\newcommand{\qforq}{\quad\mbox{for}\quad}

\newcommand{\qasq}{\quad\mbox{as}\quad}
\newcommand{\qinq}{\quad\mbox{in}\quad}

\newcommand{\non}{\nonumber}

\newcommand{\ttl}{\Large Epidemic models with varying infectivity}

\begin{document}

\title[]{\ttl}

\author[Rapha\"el \ Forien]{Rapha\"el Forien}
\address{INRAE, Centre INRAE PACA, Domaine St-Paul - Site Agroparc
84914 Avignon Cedex
FRANCE}
\email{raphael.forien@inrae.fr}

\author[Guodong \ Pang]{Guodong Pang}
\address{The Harold and Inge Marcus Department of Industrial and
Manufacturing Engineering,
College of Engineering,
Pennsylvania State University,
University Park, PA 16802 USA }
\email{gup3@psu.edu}

\author[{\'E}tienne \ Pardoux]{{\'E}tienne Pardoux}
\address{Aix--Marseille Universit{\'e}, CNRS, Centrale Marseille, I2M, UMR \ 7373 \ 13453\ Marseille, France}
\email{etienne.pardoux@univ.amu.fr}


\begin{abstract} 
\doublespacing
We introduce an epidemic model with varying infectivity and general exposed and infectious periods, where the infectivity of each individual  is a random function of the elapsed time since infection, those function being i.i.d. for the various individuals in the population. 
This approach models infection-age dependent  infectivity, and extends the classical SIR and SEIR models. 
We focus on the infectivity process  (total force of infection at each time), and prove a functional law of large number (FLLN). In the deterministic limit of this FLLN, the evolution of the mean infectivity  and of the proportion of susceptible individuals are determined by a two-dimensional deterministic integral equation.  From its solutions, we then obtain expressions for the evolution of the proportions of exposed,  infectious and recovered individuals. 
For the early phase, we study the stochastic model directly by using an approximate (non--Markovian) branching process, and show that the epidemic grows at an exponential rate on the event of 
non-extinction, which matches the rate of growth derived from the deterministic linearized  equations. 
We also use these equations to derive the expression for the basic reproduction number $R_0$ during the early stage of an epidemic, in terms of the average individual infectivity function and the exponential rate of growth of the epidemic, and apply our results to the Covid--19 epidemic. 
\end{abstract}

\keywords{epidemic model, varying infectivity, infection-age dependent infectivity,  deterministic integral equations, early phase of an epidemic,   basic reproduction number $R_0$, Poisson random measure}

\maketitle



\doublespacing

\allowdisplaybreaks

\section{Introduction}

Most of the literature on epidemic models is based upon ODE models which assume that the length of time during which a given individual is infectious follows an exponential distribution. More precisely, those deterministic models are law of large numbers limits, as the size of the population tends to infinity,  of stochastic models where all transitions from one compartment to the next have exponential distributions, see \cite{britton2018stochastic} for a recent account. However, it is largely recognized that for most diseases, the durations of the exposed and infectious periods are far from following an exponential distribution. In the case of influenza, a deterministic duration would probably be  a better approximation. Recently in \cite{PP-2020}, the last two authors of the present paper have obtained the functional law of large numbers (FLLN) limits for SIS, SIR, SEIR and SIRS models where in the stochastic model the duration of the stay in the I compartment (resp. both in the E and the I, resp. both in the I and the R  compartments) follow a very arbitrary distribution. 
Of course, in this case  the stochastic model is not a Markov model, which makes some of the proofs more delicate. Indeed, the fluctuating part of a Markov process is a martingale, and many tools exist to study tightness and limits of martingales, which are missing in the non--Markovian setting. Nevertheless, we were able in \cite{PP-2020} to use \textit{ad hoc} techniques in order to circumvent that difficulty, and we proved not only FLLNs, but also functional central limit theorems (FCLTs). While the classical ``Markovian'' deterministic models are ODEs, our more general and more realistic ``non--Markovian'' deterministic models are Volterra type integral equations of the same dimension as the classical ODE models, i.e., equations with memory. Recently in \cite{FPP2020a}, the authors used the approach in \cite{PP-2020} to describe the Covid-19 epidemic in France. The flexibility of the choice for the law of the infectious period was very helpful in order to write a realistic model with very few compartments, and our model follows better the data than Markov models.

The aim of the present paper is to go a step further in the direction of realistic models of epidemics, and to consider the case where the infectivity of infectious individuals depends upon their  time since infection. It has been established in \cite{he2020temporal} that in the case of the Covid-19 disease, the infectivity of infectious individuals decreases after symptom onset. In fact it is believed that in most infectious diseases, the infectivity of infectious individuals depends upon the time since infection. This was already argued almost a century ago by Kermack and McKendrick, two of the founders of epidemic modeling in \cite{KMK}.  In that paper, the authors assume both an infection age infectivity, and an infection age recovery rate. The latter can be thought of as the hazard function of the duration of the infectious period, which then is a general absolutely continuous distribution.  Like in the present paper, their model is a Volterra integral equation. The same deterministic model has also been described as an ``age of infection epidemic model'' in \cite{brauer2008age} and in the recent book \cite[Chapter 4.5]{BCF-2019}.   See also two recent papers in the study of Covid-19 pandemic \cite{Gaubert,foutel-rodier_individual-based_2020}, which use a transport PDE model (it is worth noting that PDEs have been commonly used to capture the effect of age of infection in the epidemic literature, see, e.g.,  \cite{hoppensteadt1974age,thieme1993may,inaba2004mathematical,magal2013two}). The novelty of the present paper is that we prove that our integral equation deterministic model it is the law of large numbers limit of a well specified individual based stochastic model. 

The most realistic assumption is probably that this infectivity first increases continuously from $0$, and then decreases back to $0$. We shall however allow jumps in the random infectivity function, in order in particular to include the classical case of a constant infectivity during the infectious period. We also want to allow a very arbitrary law for the infectious (or exposed/infectious) period(s), as was done in \cite{PP-2020}. In this work again, the FLLN limiting deterministic model is a Volterra type integral equation, which is of the same dimension as the corresponding classical ODE model, see Theorem \ref{thm-FLLN}. We treat only the case of SIR and SEIR models (see also Remark \ref{rem-SIS} on the SIS and SIRS models), but we intend to extend in later publications our approach to other types of models, including models with age classes and spatial distribution, see already \cite{PP-2020b} for multi--patch models with general exposed and infectious durations. We have also established in a separate publication the FCLT associated to the FLLN established in the present paper, see \cite{PP-2020c}.

Our approach in this paper is to assume that in the original stochastic finite population model, the infectivity of each individual  is a random function of the time elapsed since his/her infection, those functions associated to various individuals being independent and identically distributed (i.i.d.). The total force of infection at each time is the aggregate infectivity of all the individuals that are currently infectious.  We assume that the infectivity random functions are piecewise continuous with a finite number of discontinuities, which includes all the commonly seen examples, in particular, constant infectivity over a given time interval as a special case.  They are also allowed to start with a value zero for a period of time to generalize the SEIR model.  These random functions then determine the durations of the exposed and infectious periods, and therefore, their corresponding probability distributions, which can be very general. 

Under the i.i.d. assumptions of these infectivity random functions of the various individuals, 
we prove a FLLN for the infectivity process, together with the counting processes for the  susceptible, exposed, infectious and recovered individuals.  The mean infectivity and the proportion of susceptible individuals in the limit are uniquely determined by a two-dimensional Volterra integral equation. Given these two functions, the proportions of exposed, infectious and recovered individuals in the limit are expressed in terms of the two above quantities.  They generalize the integral equations in the standard SIR/SEIR models with general exposed and infectious periods in \cite{PP-2020}. Our proofs are based upon Poisson random measures associated with the infectivity process, which help us to establish tightness and convergence. This paper further develops the techniques in \cite{PP-2020}, since for establishing the mean infectivity equation, we cannot integrate by parts as was done in \cite{PP-2020}. See below Lemmas 4.4 and 4.5, which give a key argument for the proof of Lemma 4.6.

Our limiting integral equations can be easily solved numerically. 
 For the standard SIR/SEIR model with general exposed and infectious periods, the integral equations are implemented to estimate the state of the Covid-19 pandemic in France in \cite{FPP2020a}. In another recent work, Fodor et al.  \cite{FodorKatzKovacs} argue  that integral equations (in the case of deterministic infectious periods) should be used instead of ODEs since the latter may significantly underestimate the initial basic reproduction number $R_0$. We claim that our model may be used to better predict the trajectory of the epidemic, especially at the beginning of the epidemic and when certain control measures like lockdown and reopening are implemented. 

 We also study the early phase of the epidemic, during which the proportion of susceptible individuals remains close to $1$, which allows to linearize the system of equations. However, typically the epidemic starts with a very small number of infected individuals, so that we need to go back to the stochastic model if we want to describe that early phase. Thanks to a comparison with (non--Markov) branching processes, we are able to show that, conditioned upon non-extinction,  the epidemic grows at an exponential rate $\rho$, reaching a given proportion of infected individuals in the population after a length of time of the order of $\rho^{-1}\log(N)$, if $N$ is the total population size. After that time, we can follow the linearized deterministic model, whose rate of growth is the same $\rho$.

The rate $\rho$ is easily estimated from the data (if $d$ denote the ``doubling time'', i.e., the number of days necessary for the number of cases to double, $\rho=d^{-1}\log(2)$). It is then interesting to express
 the basic reproduction number $R_0$ in terms  of $\rho$ and of
 the average infectivity function, a formula which we deduce from the linearized Volterra equation, as was 
 already done  by \cite{wallinga_how_2007}, see their formula (2.7).  
 We compute explicitly the value of $R_0$ for different values of two unknown parameters for the case of the early phase of the Covid--19 epidemic in France, assuming a decrease of the infectivity compatible with the results in \cite{he2020temporal}. We see that the decrease of the infectivity with infection--age induces a decrease of $R_0$.
 
 The paper is organized as follows. In Section \ref{subsec:model}, we formulate our stochastic model, and make precise all the assumptions. In Section \ref{sec:flln}, we state  the FLLN,  Theorem \ref{thm-FLLN}. Section \ref{sec:early} is devoted to the early phase of the epidemic: we state Theorem \ref{thm:early_phase} which describes the behavior of the stochastic model, and Theorem \ref{thm:early_phase_deterministic}, which describes the behavior of the deterministic linearized model. In Section \ref{sec:estimating_contact_rate}, we express $R_0$ in terms of the exponential growth rate and the mean infectivity function, and in Section \ref{subsec:covid} we apply our techniques to the French Covid--19 epidemic during 2020. Section \ref{sec:early_phase} is devoted to the proof of Theorem \ref{thm:early_phase} and Theorem \ref{thm:early_phase_deterministic}, and Section \ref{sec:proofFLLN} to the proof of Theorem \ref{thm-FLLN}.

\section{Model and Results}

\subsection{Model description} \label{subsec:model}
 All random variables and processes are defined in a common complete probability space $(\Omega, \mathcal{F}, \P)$. 
We consider a generalized SEIR epidemic model where each infectious individual has an infectivity that is randomly varying with the time elapsed since infection. 
As usual, the population consists of four groups of individuals, susceptible, exposed, infectious and recovered. 
Let $N$ be the population size, and $S^N(t), E^N(t), I^N(t), R^N(t)$ denote the sizes of the four groups, respectively. 
We have the balance equation $N= S^N(t) + E^N(t) + I^N(t) + R^N(t)$ for $t\ge 0$. Assume that $R^N(0)=0$, $S^N(0)>0$ and $E^N(0)+I^N(0) >0$ such that $S^N(0)+E^N(0) +I^N(0)=N$. Let $A^N(t)$ be the cumulative number of individuals that become infected in $(0,t]$ for $t\ge 0$ and denote the associated event times by $\tau^N_i$, $i=1,\dots, A^N(t)$. 

Note that an infected individual is either exposed or infectious. More precisely, he/she is first exposed, then infectious. Let us first consider those individuals who are infected after time $0$ (i.e. they are in the S compartment at time $0$). The $i$--th infected individual is infected at time $\tau^N_i$. He/she is first exposed during the time interval $[\tau^N_i,\tau^N_i+\zeta_i)$. Then he/she is infectious during the time interval $(\tau^N_i+\zeta_i,\tau^N_i+\zeta_i+\eta_i)$,
and finally removed on the time interval $[\tau^N_i+\zeta_i+\eta_i,+\infty)$.
To this individual is attached an infectivity process $\{\lambda_i(t): t\ge0\}$, which is a random right--continuous function such that 
\begin{align} \label{eqn-lambda-def-1}
\lambda_i(t)\begin{cases}  = 0,&\text{if $0\le t<\zeta_i$},\\
> 0,&\text{if $\zeta_i<t<\zeta_i+\eta_i$},\\
= 0,&\text{if $t\ge\zeta_i+\eta_i$}.
\end{cases}
\end{align}
We shall formulate some assumptions on the functions $\lambda_i$ below. Let us just say for now that the collection of the functions $\{\lambda_i(\cdot)\}_{i\ge1}$  are i.i.d.
Since 
\begin{equation} \label{eqn-zeta-lambda}
\zeta_i=\inf\{t>0,\ \lambda_i(t)>0\},\quad\text{and }\zeta_i+\eta_i=\inf\{t>0,\ \lambda_i(r)=0,\ \forall r\ge t\},
\end{equation}
the collection of random vectors $(\zeta_i,\eta_i)_{i\ge 1}$ is also i.i.d.

Each initially exposed individual is associated with an infectivity process $\lambda^0_j(t)$, $j=1,\dots, E^N(0)$, with a c{\`a}dl{\`a}g path; the $\lambda^0_j$'s are assumed to be i.i.d.  and such that 
\begin{equation}\label{eqn-zeta-lambda-0}
\zeta^0_j=\inf\{t>0,\ \lambda^0_j(t)>0\}>0 \text{ a.s.}\quad\text{and }\zeta^0_j+\eta^0_j=\inf\{t>0,\ \lambda^0_j(r)=0,\ \forall r\ge t\}.
\end{equation}
Each initially infectious individual is associated with an infectivity process $\lambda^{0,I}_k(t)$, $k=1,\dots, I^N(0)$, with a c{\`a}dl{\`a}g path;  the $\lambda^{0,I}_k$'s are also assumed to be i.i.d. 
and such that 
\begin{equation}\label{eqn-zeta-lambda-0I}
\inf\{t>0,\ \lambda^{0,I}_k(t)>0\}=0 \text{ a.s.}\quad\text{and }\eta^{0,I}_k=\inf\{t>0,\ \lambda^{0,I}_k(r)=0,\ \forall r\ge t\}.
\end{equation}
We will write $(\zeta,\eta)$ (resp. $(\zeta^0,\eta^0)$, resp. $\eta^{0,I}$) for a vector which has the same law as $(\zeta_i,\eta_i)$ (resp. $(\zeta^0_j,\eta^0_j)$, resp. $\eta^{0,I}_k$).
Let $H(du,dv)$ denote the law of $(\zeta,\eta)$, $H_0(du,dv)$ that of $(\zeta^0,\eta^0)$ and $F_{0,I}$ the c.d.f. of $\eta^{0,I}$.
Moreover, we define 
\begin{align*}
\Phi(t)&:=\int_0^t\int_0^{t-u}H(du,dv)=\P(\zeta+\eta\le t),\ \Psi(t):=\int_0^t\int_{t-u}^{\infty}H(du,dv)=\P(\zeta\le t<\zeta+\eta),\\
\Phi_0(t)&:=\int_0^t\int_0^{t-u}H_0(du,dv)=\P(\zeta^0+\eta^0\le t),\ \Psi_0(t):=\int_0^t\int_{t-u}^{\infty}H_0(du,dv)=\P(\zeta^0\le t<\zeta^0+\eta^0),\\
F_{0,I}(t)&:=\P(\eta^{0,I}\le t)\, .
\end{align*}
We shall also write \[ H(du,dv)=G(du)F(dv|u),\quad H_0(du,dv)=G_0(du)F_0(dv|u),\]
i.e., $G$ is the c.d.f. of $\zeta$ and $F(\cdot|u)$ is the conditional law of $\eta$, given that $\zeta=u$, 
$G_0$ is the c.d.f. of $\zeta^0$ and $F_0(\cdot|u)$ is the conditional law of $\eta^0$, given that $\zeta^0=u$.
In the case of independent exposed and infectious periods, it is reasonable that the infectious periods of the initially exposed individuals have the same distribution as the newly exposed ones, that is, $F_{0}=F$. Note that 
$\Psi(t) = G(t) - \Phi(t)$ and $\Psi_0(t) = G_0(t) - \Phi_0(t)$. 
Also, let $G^c_0=1-G_0$, $G^c=1-G$, $F^c_{0,I}=1-F_{0,I}$, and $F^c=1-F$. 

We remark that our framework allows very general random infectivity functions $\lambda(t)$, which can be piecewise continuous (see Assumption \ref{AS-lambda}) and can also generate dependent and independent $\zeta$ and $\eta$ variables for each individual.   We give an example of independent $\zeta$ and $\eta$ variables. 
Let $\zeta$, $\eta$ and $h$ be random objects so that $\zeta$ is independent of the pair $(\eta,h)$, where $\zeta$ and $\eta$ are $\RR_+$ valued and $h$ is a random element of $C([0,1];\RR_+)$ satisfying $h(0)=h(1)=0$ and $h(t)>0$ for $0<t<1$, a.s. ($\eta$ and $h$ can be dependent).  We extend $h$ as an element of $C(\RR;\RR_+)$ by specifying that $h(t)=0$ if $t \notin [0,1]$.  Define $\lambda(t) = h(\zeta\eta^{-1}(\zeta^{-1}t-1))$ for any $t\ge0$. Then $\lambda(t) =0$ on $[0,\zeta]$, and again on $[\zeta+\eta,+\infty)$, where $\lambda(t) >0$ if $\zeta<t < \zeta+\eta$. By construction, $\zeta$ and $\eta$ are independent.

The total force of infection which is exerted on the susceptibles at time $t$ can be written as 
\begin{align} \label{eqn-mf-I}
\mathfrak{I}^N(t) = \sum_{j=1}^{E^N(0)}\lambda^0_j(t) +\sum_{k=1}^{I^N(0)}\lambda^{0,I}_k(t)
+\sum_{i=1}^{A^N(t)}\lambda_i (t-\tau^N_i) \,, \quad t \ge 0.
\end{align}
Thus, the instantaneous infectivity rate function at time $t$ is 
\begin{align} \label{eqn-Phi}
\Upsilon^N(t) = \frac{S^N(t)}{N} \mathfrak{I}^N(t), \quad t\ge 0. 
\end{align}
The infection process $A^N(t)$ can be expressed by 
\begin{align} \label{eqn-An-rep-1}
A^N(t)=\int_0^t\int_0^\infty{\bf1}_{u\le \Upsilon^N(s^-)}Q(ds,du), \quad t \ge 0,
\end{align}
where $Q$ is a standard Poisson random measure (PRM) on $\R^2_+$, and
we use ${\bf1}_{\{\cdot\}}$ for the indicator function.
One may observe that besides the PRM $Q$, 
the randomness in the epidemic dynamics comes only from 
the infectivity processes $\{\lambda^0_j(t)\}$, $\{\lambda^{0,I}_k(t)\}$ and $\{\lambda_i(t)\}$ (the infectious periods $\{\eta^0_j\}$, $(\eta^{0,I}_k)$ and $\{\eta_i\}$ are induced from them). 

The epidemic dynamics of the model can be described by 
\begin{align}
S^N(t) &\,=\,S^N(0)-A^N(t)\,, \\
E^N(t)&\,=\, \sum_{j=1}^{E^N(0)}{\bf1}_{\zeta^0_j>t}+\sum_{i=1}^{A^N(t)}{\bf1}_{\tau^N_i+\zeta_i>t}\,, \label{eqn-EN}\\
 I^N(t) &\,=\, \sum_{j=1}^{E^N(0)}{\bf1}_{\zeta^0_j\le t<\zeta^0_j+\eta^0_j}+\sum_{k=1}^{I^N(0)}{\bf1}_{\eta^{0,I}_k>t}+\sum_{i=1}^{A^N(t)}{\bf1}_{\tau^N_i+\zeta_i\le t<\tau^N_i+\zeta_i+\eta_i}\,, \label{eqn-IN}\\
  R^N(t) &\,=\, \sum_{j=1}^{E^N(0)}{\bf1}_{\zeta^0_j+\eta^0_j\le t}+\sum_{k=1}^{I^N(0)}{\bf1}_{\eta^{0,I}_k\le t}+\sum_{i=1}^{A^N(t)}{\bf1}_{\tau^N_i+\zeta_i+\eta_i\le t}\,. \label{eqn-RN}
\end{align}
In the case where $\zeta^0_j=0$ and $\zeta_i=0$,  the model is a generalized SIR model, and $E^N(t)\equiv0$.

We now make the following assumptions on the
infectivity functions and the initial quantities. 
We first state our assumptions on $\lambda^0$, $\lambda^{0,I}$ and $\lambda$.

\begin{assumption} \label{AS-lambda}
The random functions $\lambda(t)$ (resp. $\lambda^0(t)$ and resp. $\lambda^{0,I}(t)$  ), of which $\lambda_1(t), \lambda_2(t),\ldots$ (resp. $\lambda^0_1(t), \lambda^0_2(t), \ldots$ and resp. $\lambda^{0,I}_1(t), \lambda^{0,I}_2(t), \ldots$) are i.i.d. copies, satisfy the following assumptions.  
There exists a constant $\lambda^*<\infty$ such that $\sup_{t \in [0,T]} \max\{\lambda^0(t), \lambda^{0,I}(t), \lambda(t)\} \le \lambda^*$ almost surely, and in addition there exist a given number $k\ge1$, a random sequence $0=\xi^0\le\xi^1\le\cdots\le\xi^k=\eta$ and random functions $\lambda^j\in C(\R_+;\R_+)$, $1\le j\le k$
such that 
\begin{equation} \label{eqn-lambda}
 \lambda(t)=\sum_{j=1}^k\lambda^j(t){\bf1}_{[\xi^{j-1},\xi^j)}(t)\,.
\end{equation}
\end{assumption}
We define 
\[ \varphi_T(r):=\sup_{1\le j\le k}\sup_{0\le s, t\le T, |t-s|\le r}|\lambda^j(t)-\lambda^j(s)|\,.\]
It is clear that for each $T>0$, $\varphi_T$ is continuous and $\varphi_T(0)=0$.

Let $\bar{\lambda}^0(t) =\E[\lambda^0(t)]$, $\bar{\lambda}^{0,I}(t) =\E[\lambda^{0,I}(t)]$  and $\bar{\lambda}(t) =\E[\lambda(t)]$ for $t\ge 0$. 

It is clear that  $\bar{\lambda}^0(t), \bar{\lambda}^{0,I}(t)$ and $ \bar{\lambda}(t) $ are all c{\`a}d{\`a}g, and they are also uniformly bounded by $\lambda^*$.    
\begin{remark}
We think that $\lambda(t)$ being continuous is a good model of reality. However, the early phase of the function $\lambda(t)$ is not well known, since 
patients are tested only after symptom onset, and usually (this is the case in particular for the Covid--19) they may have been infectious (i.e., with $\lambda(t)>0$) prior to that. Consequently we should not exclude the possibility that $\lambda(t)$ jumps to its maximum at time $\zeta$, and the decreases continuously to $0$.

Moreover, in order to include the ``classical'' models where $\lambda(t)$ is first $0$ during the exposed period, and then equal to a positive constant during the infectious period, as well as possible models of infectivity that would be piecewise constant, we allow $\lambda(t)$ to have a given number of jumps. 
\end{remark}
\bigskip

For one of our results, we shall need the following assumption.
\begin{assumption} \label{AS-Lambda-Moment2}
Assume that
\begin{align*}
	\E\left[\left(\int_0^\infty\lambda(t)dt\right)^2\right]<\infty, && \E\left[\left(\int_0^\infty\lambda^0(t)dt\right)^2\right]<\infty.
\end{align*}
\end{assumption}
\begin{remark}
The assumption on the second moment of $\int_0^\infty \lambda(t)dt$ will be necessary in order to apply Theorem 3.2 from \cite{crump_general_1969} to the branching process approximation of the stochastic model for the early phase of the epidemic. Since we assume that $\lambda(t)\le\lambda^\ast$, for this second moment condition to be satisfied, it is sufficient that the  duration of the infectious period $\eta$ satisfies $\E[\eta^2]<\infty$, which certainly is not a serious restriction in practice. In our application to the Covid--19 in Section~\ref{subsec:covid}, we choose a law with compact support for $\eta$.
\end{remark}

Let $\bar{X}^N := N^{-1}X^N$ for any process $X^N$. Let $D=D(\R_+; \R)$ denote the space of $\R$--valued c{\`a}dl{\`a}g functions defined on $\R_+$. Throughout the paper, convergence in $D$ means convergence in the  Skorohod $J_1$ topology, see Chapter 3 of \cite{billingsley1999convergence}. 
 Also, $D^k$ stands for the $k$-fold product equipped with the product topology. 

\begin{assumption} \label{AS-FLLN}
Assume that there exist deterministic constants $\bar{E}(0),\bar{I}(0) \in [0,1]$ such that $0<\bar{E}(0)+\bar{I}(0)<1$, and
 $(\bar{E}^N(0),\bar{I}^N(0))\to (\bar{E}(0),\bar{I}(0)) \in \RR^2_+$ in probability as $N\to\infty$. 
\end{assumption}

Finally we make the following independence assumption.
\begin{assumption} \label{indep}
Assume that the triple
$(\lambda_i(\cdot),i\ge1;\ \lambda^0_j(\cdot),j\ge1;\ \lambda^{0,I}_k(\cdot), k\ge1)$, $(E^N(0),I^N(0))$ and $Q$ (the PRM upon which the construction of the process $A^N(\cdot)$ is based) are independent.
\end{assumption}

\subsection{FLLN}\label{sec:flln}
We now state the main result of this paper.

\begin{theorem} \label{thm-FLLN}
 Under Assumptions \ref{AS-lambda}, \ref{AS-FLLN} and \ref{indep}, 
\begin{equation} \label{eqn-FLLN-conv}
\big(\bar{S}^N, \bar{\mathfrak{I}}^N, \bar{E}^N, \bar{I}^N, \bar{R}^N\big) 
\to \big(\bar{S}, \bar{\mathfrak{I}}, \bar{E}, \bar{I}, \bar{R}\big) \qinq D^5 \qasq N \to \infty,
\end{equation}
in probability, locally uniformly in $t$. 
The limits $\bar{S}$ and $\bar{\mathfrak{I}}(t)$ are the unique solution of the following system of Volterra integral equations
\begin{align}
\bar{S}(t)&=1- \bar{E}(0)-\bar{I}(0)-\int_0^t\bar{S}(s)\bar{\mathfrak{I}}(s)ds\,, \label{eqn-barS}\\
\bar{\mathfrak{I}}(t)&=\bar{E}(0)\bar{\lambda}^0(t) + \bar{I}(0)\bar{\lambda}^{0,I}(t)+\int_0^t\bar{\lambda}(t-s)\bar{S}(s)\bar{\mathfrak{I}}(s)ds\,,\label{eqn-barfrakI}
\end{align}
and the limit $(\bar{E},\bar{I},\bar{R})$ is given by the following integral equations:
\begin{align}
 \bar{E}(t) &=\bar{E}(0)G_0^c(t)+\int_0^tG^c(t-s)\bar{S}(s)\bar{\mathfrak{I}}(s)ds\,,\label{eqn-barE} \\
 \bar{I}(t) &= \bar{I}(0) F_{0,I}^c(t) + \bar{E}(0)\Psi_0(t)  +\int_0^t \Psi(t-s) \bar{S}(s)\bar{\mathfrak{I}}(s)ds\,,\label{eqn-barI}\\
\bar{R}(t)&=\ \bar{I}(0) F_{0,I}(t) + \bar{E}(0) \Phi_0(t)  +\int_0^t \Phi(t-s) \bar{S}(s)\bar{\mathfrak{I}}(s)ds\, . \label{eqn-barR}
\end{align}
The limit $\bar{S}$ is in $C$, and the limits $\bar{\mathfrak{I}}, \bar{E},\bar{I},\bar{R}$ are in $D$. If $\bar{\lambda}^0$ and $\bar{\lambda}^{0,I}$ are continuous, then $\bar{\mathfrak{I}}$ is in $C$, and if $G_0$ and $F_{0,I}$ are continuous, then   $ \bar{E},\bar{I},\bar{R}$ are in $C$. 

\end{theorem}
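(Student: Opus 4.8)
The plan is to push every sum over infected individuals onto a single marked Poisson random measure and then split each scaled process into a predictable drift plus a martingale fluctuation. Since by Assumption \ref{indep} the functions $\lambda_i$ are i.i.d.\ and independent of $Q$ and of the initial data, I would realize the atoms $(\tau^N_i,\lambda_i)$ as those of a PRM $\bar Q(ds,du,d\lambda)$ on $\R_+\times\R_+\times D$ with intensity $ds\,du\,\mathbb{P}_\lambda(d\lambda)$, thinned by ${\bf1}_{u\le\Upsilon^N(s^-)}$, so that
\[\sum_{i=1}^{A^N(t)}\lambda_i(t-\tau^N_i)=\int_0^t\!\int_0^\infty\!\int_D \lambda(t-s){\bf1}_{u\le\Upsilon^N(s^-)}\,\bar Q(ds,du,d\lambda),\]
and every indicator sum in \eqref{eqn-EN}--\eqref{eqn-RN} is likewise an integral against $\bar Q$ of ${\bf1}_{s+\zeta(\lambda)>t}$, of ${\bf1}_{s+\zeta(\lambda)\le t<s+\zeta(\lambda)+\eta(\lambda)}$, and of ${\bf1}_{s+\zeta(\lambda)+\eta(\lambda)\le t}$, where $\zeta(\lambda),\eta(\lambda)$ are read off from $\lambda$ through \eqref{eqn-zeta-lambda}. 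This marked-PRM device is what replaces the integration by parts of \cite{PP-2020}.

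Writing $\bar Q=(\bar Q-\bar\nu)+\bar\nu$ with $\bar\nu(ds,du,d\lambda)=ds\,du\,\mathbb{P}_\lambda(d\lambda)$ and integrating out $u$ replaces ${\bf1}_{u\le\Upsilon^N(s^-)}$ by $\Upsilon^N(s^-)=N\bar S^N(s^-)\bar{\mathfrak{I}}^N(s^-)$, so the drift of $N^{-1}\sum_i\lambda_i(t-\tau^N_i)$ is exactly $\int_0^t\bar\lambda(t-s)\bar S^N(s)\bar{\mathfrak{I}}^N(s)\,ds$, and the drifts of the indicator sums produce the kernels $G^c,\Psi,\Phi$ appearing in \eqref{eqn-barE}--\eqref{eqn-barR}. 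For the fluctuation $M^N(t)$, namely the $N^{-1}$-scaled integral of $\lambda(t-s){\bf1}_{u\le\Upsilon^N(s^-)}$ against $\bar Q-\bar\nu$, the PRM isometry gives
\[\E\big[M^N(t)^2\big]=N^{-2}\int_0^t\E[\lambda(t-s)^2]\,\E[\Upsilon^N(s)]\,ds\le N^{-1}(\lambda^*)^3\,T,\]
using $\lambda\le\lambda^*$ and $\Upsilon^N\le N\lambda^*$, and the same bound holds for the indicator integrands. Hence every fluctuation term is $O_P(N^{-1/2})$, and I would upgrade this to uniform-in-$t$ control by a Doob/maximal inequality on the PRM integral---the content I expect Lemmas 4.4--4.6 to provide. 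Meanwhile the initial sums converge by the LLN for i.i.d.\ c\`adl\`ag functions together with Assumption \ref{AS-FLLN}: $N^{-1}\sum_j\lambda^0_j(t)\to\bar E(0)\bar\lambda^0(t)$, $N^{-1}\sum_k\lambda^{0,I}_k(t)\to\bar I(0)\bar\lambda^{0,I}(t)$, and the corresponding indicator sums yield $\bar E(0)G_0^c$, $\bar I(0)F_{0,I}^c$, $\bar E(0)\Psi_0$, $\bar E(0)\Phi_0$, $\bar I(0)F_{0,I}$.

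Next I would prove tightness of $(\bar S^N,\bar{\mathfrak{I}}^N,\bar E^N,\bar I^N,\bar R^N)$ in $D^5$ for the $J_1$ topology, using the drift representation, the vanishing fluctuations, the uniform bound $\lambda\le\lambda^*$, and the modulus $\varphi_T$ to control oscillations between the finitely many jumps allowed by Assumption \ref{AS-lambda}. Passing to the limit in the drift identities, every subsequential limit solves \eqref{eqn-barS}--\eqref{eqn-barR}; the equation for $\bar S$ comes from $\bar S^N=\bar S^N(0)-\bar A^N$ together with $\bar A^N(t)\to\int_0^t\bar S(s)\bar{\mathfrak{I}}(s)\,ds$. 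The closed $2$-dimensional system \eqref{eqn-barS}--\eqref{eqn-barfrakI} has a bounded kernel ($\bar\lambda\le\lambda^*$) and a nonlinearity $(s,i)\mapsto si$ that is Lipschitz on $0\le\bar S\le1$, $0\le\bar{\mathfrak{I}}\le\lambda^*$, so a Gr\"onwall/Picard argument gives existence and uniqueness on every $[0,T]$; this then determines $\bar E,\bar I,\bar R$ through \eqref{eqn-barE}--\eqref{eqn-barR} and upgrades subsequential to full convergence. Finally, $\bar S\in C$ is immediate from its integral form, while $\bar{\mathfrak{I}}$ inherits discontinuities only from $\bar\lambda^0,\bar\lambda^{0,I}$, and $\bar E,\bar I,\bar R$ only from $G_0,F_{0,I}$ (through $G_0^c,\Psi_0,\Phi_0,F_{0,I},\Psi,\Phi$), which gives the stated $C$-versus-$D$ dichotomy.

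The main obstacle is the mean infectivity equation \eqref{eqn-barfrakI}. Because each random trajectory $\lambda_i$ is coupled to its endogenous infection time $\tau^N_i$, one cannot evaluate $N^{-1}\sum_i\lambda_i(t-\tau^N_i)$ by the renewal/integration-by-parts route of \cite{PP-2020}. Carrying the entire function $\lambda$ as a mark of $\bar Q$ is precisely what collapses the drift to the convolution $\int_0^t\bar\lambda(t-s)\bar S(s)\bar{\mathfrak{I}}(s)\,ds$, and the delicate point---which I attribute to Lemmas 4.4--4.6---is to control the associated fluctuation uniformly in both the time $t$ and the infection-age lag $t-s$, so that the martingale part is genuinely negligible in $D$ rather than merely pointwise in $t$.
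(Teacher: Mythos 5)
Your architecture is essentially the paper's: a drift-plus-fluctuation decomposition of the infectivity sum, the LLN in $D$ for the initial terms, tightness in $D^5$, identification of subsequential limits with \eqref{eqn-barS}--\eqref{eqn-barR}, and Gr\"onwall for uniqueness. The marked-PRM realization of $(\tau^N_i,\lambda_i)$ is a legitimate (and equivalent) repackaging of the paper's setup, and your pointwise second-moment bound $\E[M^N(t)^2]\le N^{-1}(\lambda^*)^3T$ via the isometry is correct. The paper's decomposition differs slightly: it writes $\bar{\mathfrak{I}}^N_1=\breve{\mathfrak{I}}^N_1+V^N$ with $\breve{\mathfrak{I}}^N_1(t)=\int_0^t\bar\lambda(t-s)\,d\bar A^N(s)$ and $V^N(t)=N^{-1}\sum_i(\lambda_i-\bar\lambda)(t-\tau^N_i)$, so that the counting-measure martingale $\bar M^N_A$ and the mark fluctuation are handled separately; your single compensation merges the two. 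That difference is cosmetic.

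The genuine gap is the step from the pointwise variance bound to uniformity in $t$. You propose ``a Doob/maximal inequality on the PRM integral,'' but $t\mapsto M^N(t)=N^{-1}\int_0^t\!\int\!\int\lambda(t-s){\bf1}_{u\le\Upsilon^N(s^-)}(\bar Q-\bar\nu)(ds,du,d\lambda)$ is \emph{not} a martingale in $t$: the kernel $\lambda(t-s)$ moves with $t$, so for each fixed $t$ you have a compensated integral but no filtration-adapted martingale structure in the time variable, and Doob's inequality does not apply. This is exactly the obstruction the paper flags (``we cannot integrate by parts as was done in \cite{PP-2020}''), and its substitute is Lemma \ref{Lem-20}: pointwise convergence to $0$ in probability \emph{plus} an increment estimate of the form $\limsup_N\sup_{t\le T}\frac1\delta\P\bigl(\sup_{u\le\delta}|X^N(t+u)-X^N(t)|>\ep\bigr)\to0$ as $\delta\to0$ suffices for locally uniform convergence. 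Verifying that increment condition for the fluctuation term is where Assumption \ref{AS-lambda} is really used: the increment $\lambda_i(t+u-\tau^N_i)-\lambda_i(t-\tau^N_i)$ is split into the continuous pieces, controlled by the deterministic modulus $\varphi_{T+\delta}(\delta)$, and the terms where some jump time $\xi^j_i$ is crossed, which are controlled by a Chebyshev bound computed against an auxiliary PRM with mean measure $ds\,du\,F_j(d\xi)$, yielding an $O(\delta^2)$ bound compatible with the $\frac1\delta$ normalization. You invoke $\varphi_T$ only for tightness of the processes themselves, not for the fluctuation control, and without this (or an equivalent chaining/increment argument) the claim that the martingale part is ``genuinely negligible in $D$ rather than merely pointwise in $t$'' remains unproved. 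The rest of your outline (initial terms, the $G^c,\Psi,\Phi$ kernels for $\bar E,\bar I,\bar R$, uniqueness, and the $C$-versus-$D$ dichotomy) is sound and matches the paper.
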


\begin{remark}
If we suppose only that Assumptions \ref{AS-FLLN} and \ref{indep} are valid, and $\sup_{t \in [0,T]} \max\{\lambda^0(t), \\ \lambda^{0,I}(t), \lambda(t)\} \le \lambda^*$ almost surely, then Theorem \ref{thm-FLLN} remains valid, but with the convergence in probability in $D^5$ being replaced by the convergence in probability in $L^p_{loc}(\R_+;\R^5)$, for any $p\ge1$.
\end{remark}

\begin{figure}[h]
	\centering
	\includegraphics[width=0.47\textwidth]{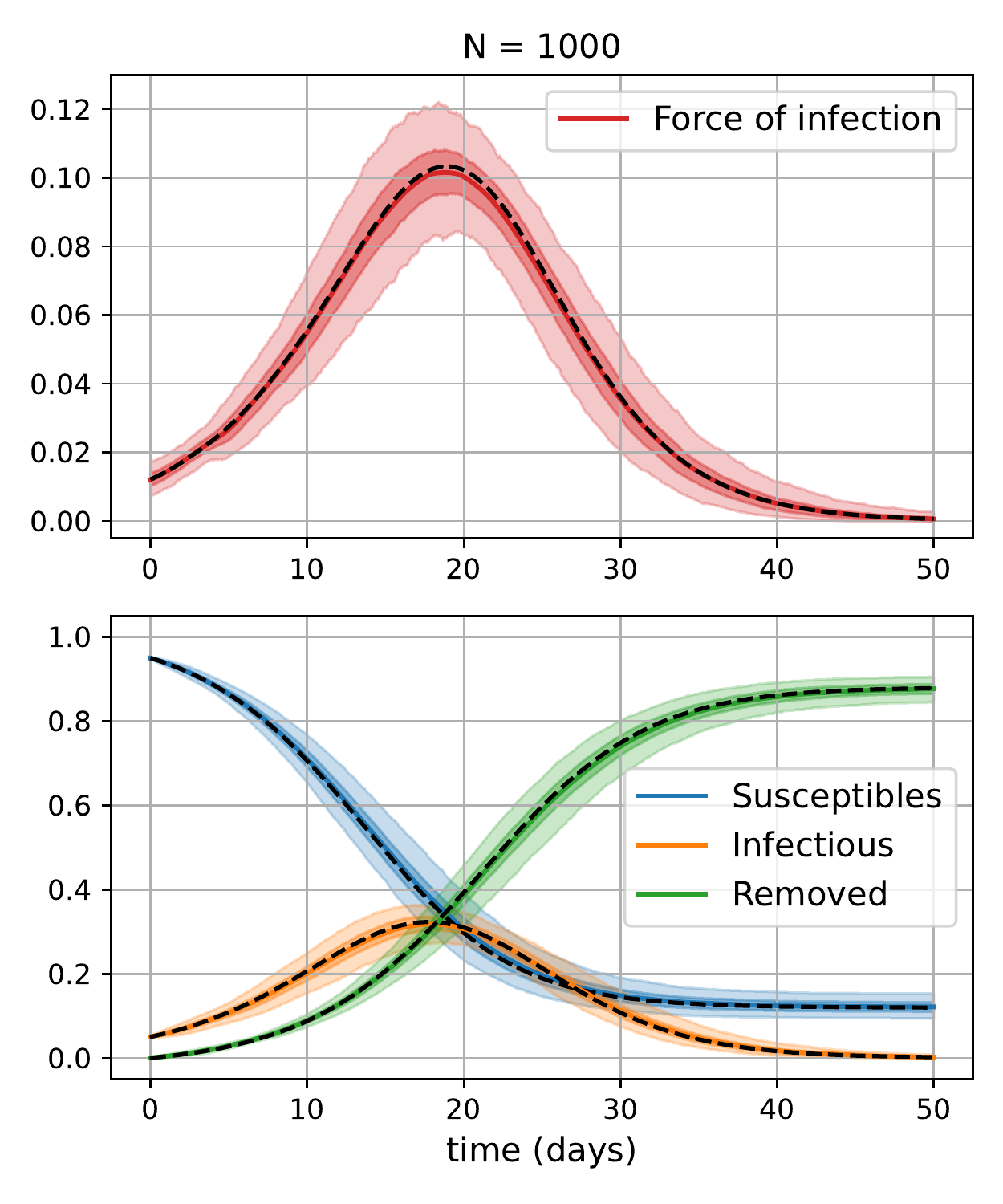}\hspace{0.02\textwidth}%
	\includegraphics[width=0.47\textwidth]{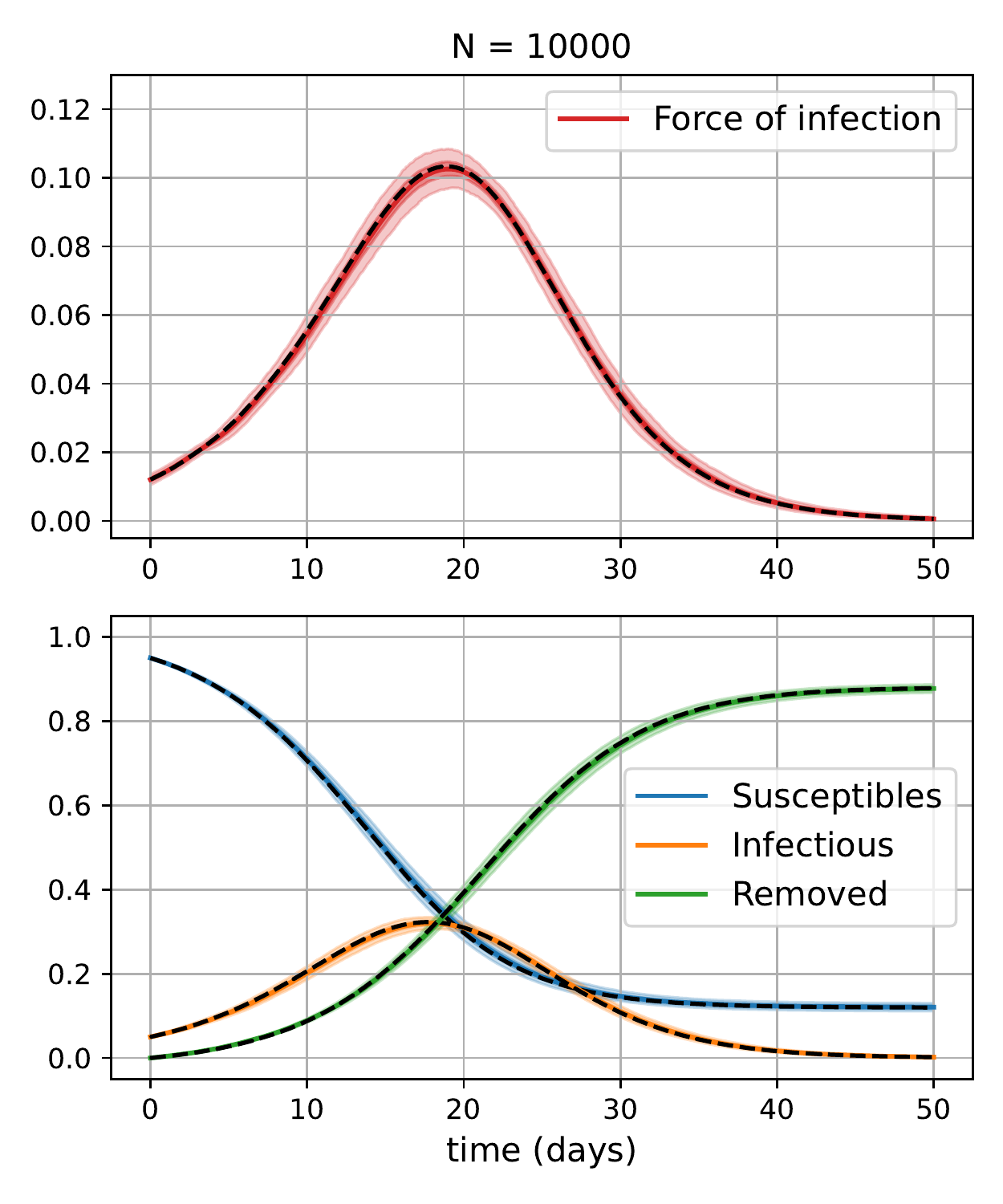}
	\caption{Numerical illustration of the FLLN obtained in Theorem~\ref{thm-FLLN} for the SEIR/SIR model (see below). Each graphic shows the mean of 1,000 independent simulations of the stochastic SEIR/SIR model (continuous lines) and the corresponding deterministic solution to \eqref{eqn-barS}-\eqref{eqn-barR} (black dashed lines), each started with $ \overline{I}^N(0) = \overline{I}(0) = 0.05 $.
	For each curve, the dark (resp. light) shaded areas around the curves represent the intervals containing 50\% (resp. 95\%) of the simulations.
	The two compartments E and I have been merged so as not to burden the graphic with another pair of curves (see below). The population size $ N = 10^3 $ on the left, $N= 10^4 $ on the right. The model and the distribution of $ (\zeta, \eta, \lambda) $ are as described in Subsection~\ref{subsec:covid} below, with $ p_R = 0.8 $, $ \alpha = 0.7 $.} \label{fig:FLLN}
\end{figure}

\paragraph{\bf The SEIR/SIR model}
Suppose now we do not want to follow the disease progression in the detail adopted so far. Rather, we merge the compartments E (exposed) and I  (infectious) into a single compartment I, where now I stands for infected, whether exposed or infectious. Doing this, we do not modify at all our model. 
Each newly infected individual belongs to the I compartment from the time of infection $\tau^N_i$ until the end of the infectious period $\tau^N_i+\zeta_i +\eta_i$, where again 
$\zeta_i+\eta_i=\inf\{t>0,\ \lambda_i(r)=0,\ \forall r\ge t\}$. 
Of course, between time $\tau^N_i$ and time $\tau^N_i+\zeta_i$, $\lambda_i(t)=0$ (recall that 
$\zeta_i=\inf\{t,\ \lambda_i(t)>0\}$), so that he/she is not infectious, but exposed. 
Likewise, each initially infected individual belongs to the I compartment from time 0 up to time $ \zeta_j^0 + \eta_j^0 $, where $ \zeta_j^0 +  \eta_j^0 = \inf \lbrace t \geq 0 : \lambda_j^0(r) = 0, \forall r \geq t \rbrace $.
Note that $ \zeta_j^0 = 0 $ if $ \lambda_j^0(0) > 0 $ (if the individual is already infectious at time 0).
As a result, \eqref{eqn-EN} and \eqref{eqn-IN} are replaced by
\begin{align} \label{eqn_IN_merged}
I^N(t) = \sum_{k=1}^{I^N(0)} {\bf 1}_{t < \zeta^0_k + \eta^0_k} + \sum_{i=1}^{A^N(t)} {\bf 1}_{t < \tau^N_i + \zeta_i + \eta_i},
\end{align}
and $ E^N(t) = 0 $ in all the other equations.
The force of infection is then
\begin{align} \label{eqn_mathfrakI_merged}
\mathfrak I^N(t) = \sum_{k=1}^{I^N(0)} \lambda^0_k(t) + \sum_{i=1}^{A^N(t)} \lambda_i(t-\tau_i^N).
\end{align}
We call this model the SEIR/SIR model, since it is an SIR model, but with 
I meaning ``infected'', and the state E is implicit, i.e. we do not exclude that individuals, when they become infected, are first exposed, then later infectious.
Define
\begin{align*}
F(t)&=\P(\zeta+\eta\le t),\quad\text{where }\zeta+\eta=\inf\{t>0,\ \lambda(r)=0,\ \forall r\ge t\},\\
F_0(t)&=\P(\zeta^0+\eta^0\le t),\quad\text{where }\zeta^0+\eta^0=\inf\{t>0,\ \lambda^0(r)=0,\ \forall r\ge t\}\,.
\end{align*}
With those notations, the deterministic LLN SEIR/SIR  model reads as follows.
 \begin{align}
 \bar{S}(t)&=1-\bar{I}(0)-\int_0^t\bar{S}(s)\bar{\mathfrak{I}}(s)ds\,, \label{seir-sirS}\\
\bar{\mathfrak{I}}(t)&=\bar{I}(0)\bar{\lambda}^{0,I}(t)+\int_0^t\bar{\lambda}(t-s)\bar{S}(s)\bar{\mathfrak{I}}(s)ds\,,
\label{seir-sirJ}\\
 \bar{I}(t) &= \bar{I}(0) F_{0}^c(t)   +\int_0^t F^c(t-s) \bar{S}(s)\bar{\mathfrak{I}}(s)ds\,,\label{seir-sirI}\\
\bar{R}(t)&=\ \bar{I}(0) F_{0}(t)   +\int_0^t F(t-s) \bar{S}(s)\bar{\mathfrak{I}}(s)ds\, .\label{seir-sirR}
 \end{align}

Now in the particular case where $\lambda^0(\cdot)$ and $\lambda(\cdot)$ are such that
$\zeta=\zeta^0=0$ a.s. (i.e., an infected individual is immediately infectious), there is no exposed period, then the above model is the generalized SIR model with varying infectivity.

Figure~\ref{fig:FLLN} illustrates the FLLN of Theorem~\ref{thm-FLLN} for the SEIR/SIR model, for two values of the population size ($ 10^3 $ and $ 10^4 $).
Each figure displays the mean of 1,000 independent simulations, the trajectory of the deterministic equations \eqref{eqn-barS}-\eqref{eqn-barR}, and the intervals containing 50\% and 95\% of the trajectories.
The details of the model and the distribution of $ (\zeta, \eta, \lambda) $ used in the simulations are described in Subsection~\ref{subsec:covid} below.
In each case, the mean of the simulations is almost superposed with the solution to the deterministic equations, and for $ N = 10^4 $, the envelopes are very concentrated around the means.
This is not surprising in view of the FCLT proved in \cite{PP-2020c}.
Indeed, this theorem implies that the trajectory of the (renormalised) stochastic process $ (\overline{S}^N(t), \overline{\mathfrak I}^N(t), \overline{I}^N(t), \overline{R}^N(t), t \geq 0) $ is (with high probability) at a distance of the order of $ N^{-1/2} $ from that of the deterministic limit.
The simulations obtained in Figure~\ref{fig:FLLN} confirm this, and the width of the 50\% and 95\% intervals are exactly proportional to $ N^{-1/2} $.

\begin{remark}
The above result generalizes both our SIR and our SEIR FLLN results in \cite{PP-2020}. 

The SIR model in \cite{PP-2020} is the particular case of the present result, where 
$\lambda(t)=\lambda{\bf1}_{t<\eta}$, $\eta$ being the random duration of the infectious period. In this case, $\bar{\lambda}(t)=\lambda F^c(t)$, if $F$ is the c.d.f. of $\eta$, and $F^c=1-F$. Note that in this case
$\bar{\mathfrak{I}}(t)=\lambda \bar{I}(t)$. Therefore, if we divide the $\bar{\mathfrak{I}}$ equation by $\lambda$, we find equation \eqref{eqn-barI}, 
which is also equation (2.4) in \cite{PP-2020}. If we assume that the law of $\eta$ is exponential, then we are in the case of the classical SIR model.

The SEIR model in \cite{PP-2020} corresponds to the situation where 
$\lambda(t)=\lambda{\bf1}_{\zeta\le t<\zeta+\eta}$, where $\zeta$ is the duration of the exposed period (the time when the individual is infected, but not yet infectious), and $\eta$ is as above, while
$\lambda^0(t)=\lambda{\bf1}_{\zeta^0\le t<\zeta^0+\eta^0}$. Then 
$\bar{\lambda}(t)=\lambda[\P(\zeta\le t)-\P(\zeta+\eta\le t)]=\lambda\Psi(t)$. If we divide the $\bar{\mathfrak{I}}$ equation by $\lambda$, we find equation \eqref{eqn-barI}, which is also (3.15) in \cite{PP-2020}. If moreover $\zeta$ and $\eta$ are independent exponential random variables, then we are reduced to the classical SEIR model.
\end{remark}

 \begin{remark} \label{rem-SIS}
 For the generalized SIS model, since $\bar{S}(t)=1-\bar{I}(t)$, it is clear that the epidemic dynamics in the FLLN is determined by the two--dimensional functions $\big(\bar{\mathfrak{I}}, \bar{I}\big)$ via the following integral equations:
  \begin{align*}
\bar{\mathfrak{I}}(t)&=\bar{I}(0)\bar{\lambda}^{0,I}(t)+\int_0^t\bar{\lambda}(t-s)(1-\bar{I}(s))\bar{\mathfrak{I}}(s)ds\,,\\
 \bar{I}(t) &= \bar{I}(0) F_{0,I}^c(t)   +\int_0^t F^c(t-s) (1-\bar{I}(s))\bar{\mathfrak{I}}(s)ds\,. 
 \end{align*}
 Recall that as shown in Theorem 2.3 of \cite{PP-2020}, in the SIS with general infectious periods, $\bar{\mathfrak{I}}(s) = \lambda \bar{I}(s)$, and the epidemic dynamics is determined by the one--dimensional integral equation for $\bar{I}$.
 
 For the generalized SIRS model,  the variables $(\zeta_i, \eta_i)$ in our setup represent the infectious and recovered/immune periods of newly infected individuals, and similarly the variables $(\zeta^0_j, \eta^0_j)$ represent the infectious and immune periods of initially infectious individuals. We assume that there is no initially immune individuals. Let $I^N, R^N$ be the processes counting infectious and recovered/immune individuals (corresponding to the notation $E^N$ and $I^N$ in the SEIR model).  
 Of course, instead of \eqref{eqn-lambda-def-1}, the infectivity function $\lambda(t)$ should be positive only in the infectious periods $[0,\zeta_i)$. Similarly, $\lambda^0_j(t)$ should be positive only over $[0, \zeta^0_j)$. 
 The definitions of the variables $(\zeta_i, \eta_i)$, $(\zeta_j^0, \eta^0_j)$  in \eqref{eqn-zeta-lambda} and \eqref{eqn-zeta-lambda-0} also need to be modified accordingly in the natural way.
 The distribution functions $G_0, F_{0,R}$ are for initially infectious and immune periods, and $G,F$ for newly infectious and immune periods, similarly for the notation $\Psi, \Psi_0, \Phi, \Phi_0$. 
Then the epidemic dynamics of the generalized SIRS model in the FLLN is determined by the three--dimensional functions $\big(\bar{\mathfrak{I}}, \bar{I}, \bar{R}\big)$ via the following integral equations:
\begin{align*} 
\bar{\mathfrak{I}}(t)&=\bar{I}(0)\bar{\lambda}^0(t)+\int_0^t\bar{\lambda}(t-s)\big(1- \bar{I}(s) - \bar{R}(s)\big)\bar{\mathfrak{I}}(s)ds\,,\\
 \bar{I}(t) &=\bar{I}(0)G_0^c(t)+\int_0^tG^c(t-s)\big(1- \bar{I}(s) - \bar{R}(s)\big) \bar{\mathfrak{I}}(s)ds\,, \\
 \bar{R}(t) &=  \bar{I}(0)\Psi_0(t)  +\int_0^t \Psi(t-s) \big(1- \bar{I}(s) - \bar{R}(s)\big) \bar{\mathfrak{I}}(s)ds\,.
\end{align*}
 Also recall that as shown in Theorem 3.3 of \cite{PP-2020}, in the SIRS model with general infectious and recovered periods, $\bar{\mathfrak{I}}(s) = \lambda \bar{I}(s)$, and the epidemic dynamics is determined by the two--dimensional integral equation for $\big(\bar{I}, \bar{R}\big)$.
 \end{remark}

\subsection{The early phase of the epidemic}\label{sec:early}

Theorem~\ref{thm-FLLN} shows that the deterministic system of equations \eqref{eqn-barS}-\eqref{eqn-barfrakI} accurately describes the evolution of the stochastic process defined in Subsection~\ref{subsec:model} when the initial number of infectious individuals is of the order of $ N $.
But epidemics typically start with only a handful of infectious individuals, and it takes some time before the epidemic enters the regime of Theorem~\ref{thm-FLLN}.
Exactly how long this takes depends on the population size $ N $ and on the growth rate of the epidemic.
To determine this growth rate, we study the behavior of the stochastic process when the initial number of infectious individuals is kept fixed as $ N \to \infty $.

In order to simplify the notations, we shall use the reduced model introduced in \eqref{eqn_IN_merged} and \eqref{eqn_mathfrakI_merged}, where exposed and infectious individuals are merged in a single infected compartment I.
We now suppose that $ I^N(0) = I(0) $ is a fixed random variable taking values in $ \lbrace 1, \ldots, N_0 \rbrace $ for some $ N_0 \geq 1 $, and we take $ N \geq N_0 $ throughout this section.

Let
\begin{align} \label{def_R0}
R_0 = \int_{0}^{\infty} \overline{\lambda}(t) dt,
\end{align}
and let $ \rho \in \R $ be the unique solution of
\begin{align} \label{def_rho}
\int_{0}^{\infty} \overline{\lambda}(t) e^{-\rho t} dt = 1.
\end{align}
The quantity $ R_0 $ is the well--known basic reproduction number, \textit{i.e.}, the average number of individuals infected by a typical infected individual in a large, fully susceptible population.
It is also well known that, if $ R_0 \leq 1 $, the total number of infections remains small as $ N \to \infty $, \textit{i.e.}, $ \limsup_{t \to \infty} A^N(t) $ converges in probability as $ N \to \infty $ to a random variable $ Z $ taking values in $ \N $, almost surely, see Corollary~1.2.6 in \cite{britton2018stochastic}.
If $ R_0 > 1 $, however, with positive probability, a major outbreak takes place, \textit{i.e.}, a positive fraction of the $ N $ individuals is infected at some point during the course of the epidemic.
The time needed in order to observe this major outbreak has been studied for Markovian epidemic models in \cite{barbour1975duration}.
More precisely, it has been shown that, starting from a fixed number of individuals, on the event that there is a major outbreak, the first time at which the proportion of infected individuals is at least $ \varepsilon > 0 $ is
\begin{align*}
\frac{1}{\rho} \log(N) + \mathcal{O}(1),
\end{align*}
as $ N \to \infty $, for any $ \varepsilon > 0 $ small enough, where $ \rho > 0 $ is given by \eqref{def_rho} (it can easily be seen that $ \rho > 0 $ if and only if $ R_0 > 1 $).
The aim of this section is to extend this result to our non--Markovian setting.

We thus let, for $ \varepsilon \in (0,1) $,
\begin{align*}
T^N_\varepsilon := \inf \lbrace t \geq 0 : A^N(t) \geq \varepsilon N \rbrace
\end{align*}
and, for any $ \alpha \in (0,1) $,
\begin{align*}
\mathcal{T}^N_\alpha := \inf \lbrace t \geq 0 : A^N(t) \geq N^\alpha \rbrace.
\end{align*}
Here and in what follows, we shall use $ X^N \Rightarrow X $ to denote the convergence in distribution of a sequence of random variables $ (X^N, N \geq 1) $ to a random variable $ X $ as $ N \to \infty $, \textit{i.e.}, $ X^N \Rightarrow X $ if and only if, for any continuous and bounded real-valued function $ \Phi $, $ \E \left[ \Phi(X^N) \right] \to \E \left[ \Phi(X) \right] $ as $ N \to \infty $.
We then have the following result, which we prove in Section~\ref{sec:early_phase}.

\begin{theorem} \label{thm:early_phase}
	Under Assumptions~\ref{AS-lambda} and \ref{AS-Lambda-Moment2},
	 for any $ \varepsilon > 0 $ such that $ \varepsilon < 1-\frac{1}{R_0} $, as $ N \to \infty $,
	\begin{align*}
	\frac{T^N_\varepsilon}{\log(N)} \Rightarrow \frac{1}{\rho} X,
	\end{align*}
	where $ X = +\infty $ with probability $ q $ and $ X = 1 $ otherwise, for some $ q \in (0,1) $.
	Moreover, for any $ \alpha \in (0,1) $,
	\begin{align*}
	\frac{\mathcal{T}^N_\alpha}{\log(N)} \Rightarrow \frac{\alpha}{\rho} X.
	\end{align*}
\end{theorem}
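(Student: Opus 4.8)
The plan is to approximate the infection process $A^N$ in its early phase by general (Crump--Mode--Jagers) branching processes, in which each infected individual, at age $a$ since its own infection, produces new infections at rate $\lambda(a)$, the $\lambda$'s being i.i.d.\ across individuals. On the probability space carrying the PRM $Q$ I would build, by a monotone coupling through $Q$, two such processes $Z^-$ and $Z^+$ with $Z^-(t)\le A^N(t)\le Z^+(t)$. Since $\Upsilon^N(t)=(S^N(t)/N)\mathfrak I^N(t)\le\mathfrak I^N(t)$, accepting a birth whenever a point of $Q$ falls below the individual infectivities yields the upper process $Z^+$, of mean reproduction measure $\bar\lambda(t)\,dt$, with $A^N(t)\le Z^+(t)$ for all $t$; and as long as $A^N(t)\le\delta N$ we have $S^N(t)/N\ge 1-\delta$, so scaling the intensities by $1-\delta$ yields the lower process $Z^-$ with $A^N(t)\ge Z^-(t)$ until $A^N$ leaves $[0,\delta N]$. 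Since we follow $A^N$ only up to a level $o(N)$, both comparisons hold on the relevant window, and monotonicity gives $\{Z^-\ \text{survives}\}\subseteq\{Z^+\ \text{survives}\}$.

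The key input is the growth of these branching processes. The mean reproduction measure of $Z^+$ is $\bar\lambda(t)\,dt$, whose Malthusian parameter is exactly the $\rho$ of \eqref{def_rho} and whose mean offspring number is the $R_0$ of \eqref{def_R0}; its extinction probability is the $q$ of the statement, with $q<1$ precisely because $R_0>1$. Assumption~\ref{AS-Lambda-Moment2} supplies the second moment of $\int_0^\infty\lambda(t)\,dt$, which is the hypothesis of Theorem~3.2 of \cite{crump_general_1969}; it yields $e^{-\rho t}Z^+(t)\to W^+$ a.s.\ and in $L^2$, with $\{W^+>0\}$ coinciding almost surely with non-extinction. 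The same holds for $Z^-$, whose Malthusian parameter $\rho_\delta<\rho$ satisfies $\rho_\delta\uparrow\rho$ and whose extinction probability $q_\delta\to q$ as $\delta\downarrow 0$. Writing $\tilde T^+_m=\inf\{t:Z^+(t)\ge m\}$ and likewise $\tilde T^-_m$, on non-extinction $Z^+(t)/e^{\rho t}\to W^+>0$ forces $\tilde T^+_{N^\alpha}/\log N\to\alpha/\rho$, and similarly $\tilde T^-_{N^\alpha}/\log N\to\alpha/\rho_\delta$, while on extinction these times are $+\infty$ for all large $N$. On the event $\{Z^-\ \text{survives}\}$ of probability $1-q_\delta$, where $Z^+$ survives too, the sandwich $\tilde T^+_{N^\alpha}\le\mathcal T^N_\alpha\le\tilde T^-_{N^\alpha}$ traps $\mathcal T^N_\alpha/\log N$ in $[\alpha/\rho,\alpha/\rho_\delta]$; since moreover $\P(A^N\ \text{reaches}\ N^\alpha)$ is squeezed between $1-q_\delta$ and $\P(Z^+\ \text{reaches}\ N^\alpha)\to 1-q$, letting $\delta\downarrow 0$ yields $\mathcal T^N_\alpha/\log N\Rightarrow(\alpha/\rho)X$.

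For $T^N_\varepsilon$ I would push the same couplings up to a small macroscopic level and then invoke the FLLN. For fixed $\delta<\varepsilon$ the comparisons remain valid until $A^N$ reaches $\delta N$, and the sandwich $\tilde T^+_{\delta N}\le\inf\{t:A^N(t)\ge\delta N\}\le\tilde T^-_{\delta N}$, with $\tilde T^+_{\delta N}/\log N\to 1/\rho$ and $\tilde T^-_{\delta N}/\log N\to 1/\rho_\delta$, shows that on survival $A^N$ reaches $\delta N$ at time $\frac1\rho\log N+O(1)$. From proportion $\delta$, a FLLN started from the configuration present at that time (a version of Theorem~\ref{thm-FLLN}) carries the epidemic to proportion $\varepsilon$ in a further deterministic $O(1)$ time, so on survival $T^N_\varepsilon/\log N\in[1/\rho,1/\rho_\delta]$; letting $\delta\downarrow 0$ gives $T^N_\varepsilon/\log N\to 1/\rho$, while on extinction $T^N_\varepsilon=+\infty$ for all large $N$. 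Hence $T^N_\varepsilon/\log N\Rightarrow(1/\rho)X$ with the same $X$. The hypothesis $\varepsilon<1-1/R_0$ is used precisely in this FLLN step: it guarantees that $\bar S$ remains above $1/R_0$ until the infected proportion reaches $\varepsilon$, so the crossing occurs within the exponential-growth phase and this last, $O(1)$, time is finite.

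The main obstacle is twofold. First, one must make the monotone sandwich fully rigorous: the lower comparison holds only until $A^N$ leaves $[0,\delta N]$, so it must be checked that this exit occurs strictly after the relevant level with high probability, and that the survival/extinction dichotomies of $Z^-$, $Z^+$ and of the epidemic genuinely coincide as $\delta\downarrow 0$, so that a single event $\{X=+\infty\}$ of probability $q$ governs both limits; the FLLN restart from the empirical configuration at the hitting time likewise requires a form of Theorem~\ref{thm-FLLN} accommodating that general initial profile. Second, and more delicate, is the Kesten--Stigum step $\P(W^+>0)=1-q$: this is what upgrades ``$Z^+$ survives'' to ``$e^{-\rho t}Z^+(t)$ has a strictly positive limit'', and hence what pins the hitting times at $(\alpha/\rho)\log N$ and $(1/\rho)\log N$ rather than at merely $O(\log N)$. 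It is exactly here that the $L^2$ hypothesis of Assumption~\ref{AS-Lambda-Moment2}, via Theorem~3.2 of \cite{crump_general_1969}, is indispensable.
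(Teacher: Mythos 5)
Your coupling-plus-CMJ-branching-process skeleton is exactly the paper's: the monotone sandwich through the PRM (the paper's Lemma~\ref{lemma:coupling}), the identification of $\rho$, $R_0$ and the extinction probability $q$, the use of Theorem~3.2 of \cite{crump_general_1969} via Assumption~\ref{AS-Lambda-Moment2} to get $e^{-\rho_\varepsilon t}A_\varepsilon(t)\to W_\varepsilon$ with $\{W_\varepsilon>0\}$ the survival event, the continuity $\P(W_\varepsilon=0)\to\P(W_0=0)$ as $\varepsilon\downarrow0$, and the resulting squeeze for $\mathcal T^N_\alpha$ are all carried out in the paper in essentially the form you describe (with the care you gloss over — checking that the lower coupling window covers the time needed — supplied by first proving the lower bound $T^N_\varepsilon\geq\frac{\alpha+\delta}{\rho}\log N$ and then using it to validate the coupling up to that time).

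Where you genuinely diverge is the upper bound on $T^N_\varepsilon$, and there your argument has a real gap. You propose to ride the branching comparison up to level $\delta N$ and then ``invoke the FLLN'' restarted from the configuration present at that random time. Theorem~\ref{thm-FLLN} does not cover this restart: it requires the initially infected individuals to carry i.i.d.\ infectivity profiles with a prescribed law, whereas at the hitting time of $\delta N$ the infected population consists of individuals with random infection ages $T^N_\delta-\tau^N_i$ whose residual infectivities are neither i.i.d.\ nor of known law; making this rigorous would require both a FLLN with a general initial age structure and a convergence result for the empirical age distribution at the (random) hitting time — you name this as an ``obstacle'' but do not fill it, and it is substantial (the paper explicitly declares a closely related comparison to be beyond its scope). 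The paper avoids the restart entirely: it builds a single time-inhomogeneous lower branching process $A^N_-$ on the same PRM, whose thinning rate switches from $1-\eta$ to $1-\varepsilon'$ at the \emph{deterministic} time $\frac{1-\delta}{\rho}\log(N)$, with $\varepsilon'\in(\varepsilon,1-\frac{1}{R_0})$ so that $\rho_{\varepsilon'}>0$; decomposing $A^N_-$ after the switch into i.i.d.\ daughter branching processes and applying the law of large numbers to their number shows $A^N_-\big(\frac{1+\delta'}{\rho}\log N\big)>N$ on survival, which gives the upper bound on $T^N_\varepsilon$ purely within the branching framework. If you want to complete your proof along your own lines, you must either prove the restarted FLLN with general initial age profiles, or replace that step by a construction of this inhomogeneous type. (As a minor point, the condition $\varepsilon<1-\frac{1}{R_0}$ enters the paper's proof exactly to guarantee supercriticality of the post-switch thinned process, which is the branching-process face of the ``$\bar S$ stays above $1/R_0$'' heuristic you invoke.)
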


Theorem~\ref{thm:early_phase} essentially says that, on an event of probability close to $ 1-q $, $ t \mapsto A^N(t) $ grows approximately like (a constant times) $ t \mapsto e^{\rho t} $ until it becomes of the order of $ N $.
This exponential growth comes from the fact that, as long as $ \overline{S}^N(t) \approx 1 $, the infected individuals behave almost like a branching process (which in our case is non--Markovian, and is of the type studied in \cite{crump_general_1968,crump_general_1969}).
Since $ A^N(t) \approx e^{\rho t} $, this approximation is good as long as $ t \ll \frac{1}{\rho} \log(N) $, at which time the proportion of susceptible individuals is no longer close to one, and the branching process approximation breaks down.
We shall also see in the proof of Theorem~\ref{thm:early_phase} that $ q $ is equal to the extinction probability of this approximating branching process.

\begin{remark} \label{rk:eps_R0}
	The condition $ \varepsilon < 1-\frac{1}{R_0} $ comes from the fact that, as long as $ \overline{S}(t) < \frac{1}{R_0} $, each infected individual infects on average more than one susceptible individual.
	Hence the proportion of susceptible individuals needs to become lower than this threshold for the epidemic to die out (on the event that there is a major outbreak).
	As a result, $ A^N(t) $ has to exceed $ \varepsilon N $ for some time $ t < \infty $ for any $ \varepsilon < 1-\frac{1}{R_0} $.
\end{remark}

The fact that the number of infected individuals grows exponentially at rate $ \rho $ as long as the proportion of susceptible individuals stays close to one can also be seen from the deterministic equations by taking $ \overline{S}(t) = 1 $ in \eqref{seir-sirJ} (as well as \eqref{seir-sirI} and \eqref{seir-sirR}).
This substitution leads to the following (linear) system (recall that in this section $F$ is the distribution function of the r.v. $\zeta+\eta$): 
\begin{equation}\label{eqlin0}
\begin{split}
{\mathfrak{I}}(t) &= {I}(0)  \bar{\lambda}^0(t) + \int_0^t \bar{\lambda}(t-s) {\mathfrak{I}}(s)ds \,,\\
{I}(t) &= {I}(0) F_{0}^c(t)   +\int_0^t F^c(t-s) {\mathfrak{I}}(s)ds\,,\\
{R}(t)&= R(0)+  {I}(0) F_{0}(t)   +\int_0^t F(t-s) {\mathfrak{I}}(s)ds\, . 
\end{split}
\end{equation}
We prove the following in Section~\ref{sec:early_phase}.

\begin{theorem} \label{thm:early_phase_deterministic}
Assume that Assumption \ref{AS-lambda} holds true.
	For $ \rho \in \R $, suppose that $ \E \big[ e^{-\rho (\zeta + \eta)} \big] < \infty $ and define
	\begin{align} \label{def_i_r}
	\bm{i} := \int_{0}^{\infty} F^c(s) \rho e^{-\rho s} ds, && \bm{r} := 1 - \bm{i},
	\end{align}
	and
	\begin{align*}
	\overline{\lambda}_\rho(t) := \frac{\int_{0}^{\infty} \overline{\lambda}(t+s) e^{-\rho s} ds}{\int_{0}^{\infty} F^c(s) e^{-\rho s} ds}, && F_\rho^c(t) := \frac{\int_{0}^{\infty} F^c(t+s) e^{-\rho s} ds}{\int_{0}^{\infty} F^c(s) e^{-\rho s} ds}.
	\end{align*}
	Suppose first that $ R_0 > 1 $ and that $ \rho > 0 $ is the solution to \eqref{def_rho}. 
	Then, if $ \overline{\lambda}^0 = \overline{\lambda}_\rho $ and $ F_0 = F_\rho $, the linear system \eqref{eqlin0} admits the following solution
	\begin{align} \label{det_solution_exponential}
	\mathfrak{I}(t)=\rho\, e^{\rho t}, \quad I(t)=\bm{i}\, e^{\rho t}, \quad R(t) = \bm{r}\, e^{\rho t}\, \quad t \geq 0.
	\end{align}
	If, however, $ R_0 < 1 $ and $ \rho < 0 $ (still satisfying \eqref{def_rho}), then the linear system \eqref{eqlin0} (with $ \overline{\lambda}^0 = \overline{\lambda}_\rho $ and $ F_0 = F_\rho $) admits the following solution
	\begin{align*}
	\mathfrak I (t) = -\rho e^{\rho t}, \quad I(t) = -\bm{i} e^{\rho t}, \quad R(t) = R(0) + \bm{r} (1-e^{\rho t}), \quad t \geq 0.
	\end{align*}
\end{theorem}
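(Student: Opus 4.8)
The plan is to prove this by direct substitution. The system \eqref{eqlin0} is a \emph{linear} system of Volterra equations, and the assertion is only that the stated exponential functions \emph{solve} it (uniqueness is not claimed), so it suffices to plug them in and check the three identities. The one computation underlying all of them is the evaluation of a convolution of a kernel $K \in \{\overline{\lambda}, F^c\}$ against $s \mapsto \rho e^{\rho s}$. Writing $C_\rho := \int_0^\infty F^c(s) e^{-\rho s}\,ds$, the change of variable $u = t-s$ followed by the splitting $\int_0^t = \int_0^\infty - \int_t^\infty$ and a further substitution $u = t+v$ in the tail yields the master identity
\begin{equation*}
\int_0^t K(t-s)\,\rho e^{\rho s}\,ds = \rho e^{\rho t}\int_0^\infty K(u) e^{-\rho u}\,du \; - \; \rho \int_0^\infty K(t+v) e^{-\rho v}\,dv .
\end{equation*}
The definitions of $\overline{\lambda}_\rho$ and $F_\rho^c$ are tailor-made so that the residual tail $\int_0^\infty K(t+v) e^{-\rho v}\,dv$ equals $C_\rho\,\overline{\lambda}_\rho(t)$ when $K = \overline{\lambda}$ and $C_\rho\,F_\rho^c(t)$ when $K = F^c$.

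First I would treat the $\mathfrak I$ equation in the case $R_0 > 1$, $\rho > 0$. Taking $K = \overline{\lambda}$ and using $\int_0^\infty \overline{\lambda}(u) e^{-\rho u}\,du = 1$ from \eqref{def_rho}, the master identity gives $\int_0^t \overline{\lambda}(t-s)\rho e^{\rho s}\,ds = \rho e^{\rho t} - \rho C_\rho\,\overline{\lambda}_\rho(t)$. Since $I(0) = \bm{i} = \rho C_\rho$ and $\overline{\lambda}^0 = \overline{\lambda}_\rho$ by hypothesis, the leading term $I(0)\overline{\lambda}^0(t) = \rho C_\rho\,\overline{\lambda}_\rho(t)$ cancels the residual tail exactly, leaving $\mathfrak I(t) = \rho e^{\rho t}$. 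This step also confirms that the correct initial value is forced to be $I(0) = \bm{i}$.

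Next, for the $I$ equation I take $K = F^c$; the master identity gives $\int_0^t F^c(t-s)\rho e^{\rho s}\,ds = \rho C_\rho e^{\rho t} - \rho C_\rho F_\rho^c(t) = \bm{i} e^{\rho t} - \bm{i} F_\rho^c(t)$, and the boundary term $I(0) F_0^c(t) = \bm{i} F_\rho^c(t)$ (using $F_0 = F_\rho$) restores the missing piece to give $I(t) = \bm{i} e^{\rho t}$. For the $R$ equation I would write $F = 1 - F^c$, so that $\int_0^t F(t-s)\rho e^{\rho s}\,ds = (e^{\rho t}-1) - (\bm{i} e^{\rho t} - \bm{i} F_\rho^c(t))$ by the elementary $\int_0^t \rho e^{\rho s}\,ds = e^{\rho t}-1$; adding $R(0) = \bm{r}$ and $I(0)F_0(t) = \bm{i}(1 - F_\rho^c(t))$ and invoking $\bm{i} + \bm{r} = 1$ collapses everything to $R(t) = \bm{r} e^{\rho t}$.

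The case $R_0 < 1$, $\rho < 0$ is verified by the identical computation with $\mathfrak I(t) = -\rho e^{\rho t}$ and the sign changes propagated through; the only structural difference is the $R$ equation, which now reads $R(0) + \bm{r}(1-e^{\rho t})$ since $R$ relaxes rather than grows. I do not expect a genuine obstacle, as the argument is a verification rather than a construction; the one point requiring care is the convergence of the tail integrals $\int_0^\infty K(t+v)e^{-\rho v}\,dv$ and of $C_\rho$ when $\rho < 0$, which is exactly what the integrability hypothesis $\E\big[e^{-\rho(\zeta+\eta)}\big] < \infty$ supplies, together with the uniform bound $\overline{\lambda} \le \lambda^*$ from Assumption~\ref{AS-lambda}. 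Conceptually, the content of the theorem is that the exponential profile is \emph{exact} (not merely asymptotic) precisely when the initial infectivity and infection-age law are placed in the stable-age profiles $\overline{\lambda}_\rho$ and $F_\rho$, so the real work is simply confirming that these definitions engineer the advertised cancellations.
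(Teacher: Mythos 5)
Your proposal is correct and follows essentially the same route as the paper: direct substitution of the exponential ansatz, a change of variables splitting $\int_0^t = \int_0^\infty - \int_t^\infty$ so that the boundary terms $I(0)\overline{\lambda}_\rho(t)$ and $I(0)F_\rho^c(t)$ cancel the tail integrals, and the relation \eqref{def_rho} to close the $\mathfrak I$ equation. The only cosmetic difference is that you verify the $R$ equation directly via $F = 1-F^c$, whereas the paper deduces it from the identity $I(t)+R(t)=I(0)+R(0)+\int_0^t\mathfrak I(s)\,ds$; these are the same computation.
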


The deterministic system \eqref{eqlin0} can be thought of as an approximation of the expectation of the stochastic process $ (\mathfrak{I}^N(t), I^N(t), R^N(t)) $ when $ \overline{S}^N(t) \approx 1 $.
Note that if we take the exponentially growing solution \eqref{det_solution_exponential} and if we set
\begin{align*}
	A(t) := I(t) + R(t) - (I(0) + R(0))
\end{align*}
(which corresponds to the number of newly infected individuals up to time $ t $), then, since $ \bm{i}+\bm{r} = 1 $, $ A(t) = e^{\rho t}-1 $ and
\begin{align} \label{expected_A}
	A\left( \frac{\alpha}{\rho} \log(N) \right) = N^\alpha - 1 \sim N^\alpha.
\end{align}
Hence Theorems~\ref{thm:early_phase} and \ref{thm:early_phase_deterministic} show that the stochastic model and the linear deterministic system \eqref{eqlin0} have the same asymptotical behavior, on the event that there is a major outbreak, for times of the form $ \frac{\alpha}{\rho} \log(N) $, $ \alpha \in (0,1)$.
This is further illustrated in Figure~\ref{fig:exp_growth}, which displays the mean of a subset 1,000 independent copies of $ t \mapsto I(0) + A^N(t) $ for which the epidemic didn't go extinct at the beginning.
We see on the figure that, after an initial stochastic phase, whose duration may vary between different realizations, the cumulative number of infected individuals indeed grows at the expected rate $ \rho $.
We also see that the slope of $t\mapsto I(0) + A^N(t)$ starts to decline when $A^N(t)$ exceeds $N/10$ (hence when 
$\bar{S}^N(t)$ becomes less than $0.9$), which is to be expected from the deterministic model.

\begin{figure}[ht]
	\centering
	\includegraphics[width=0.7\textwidth]{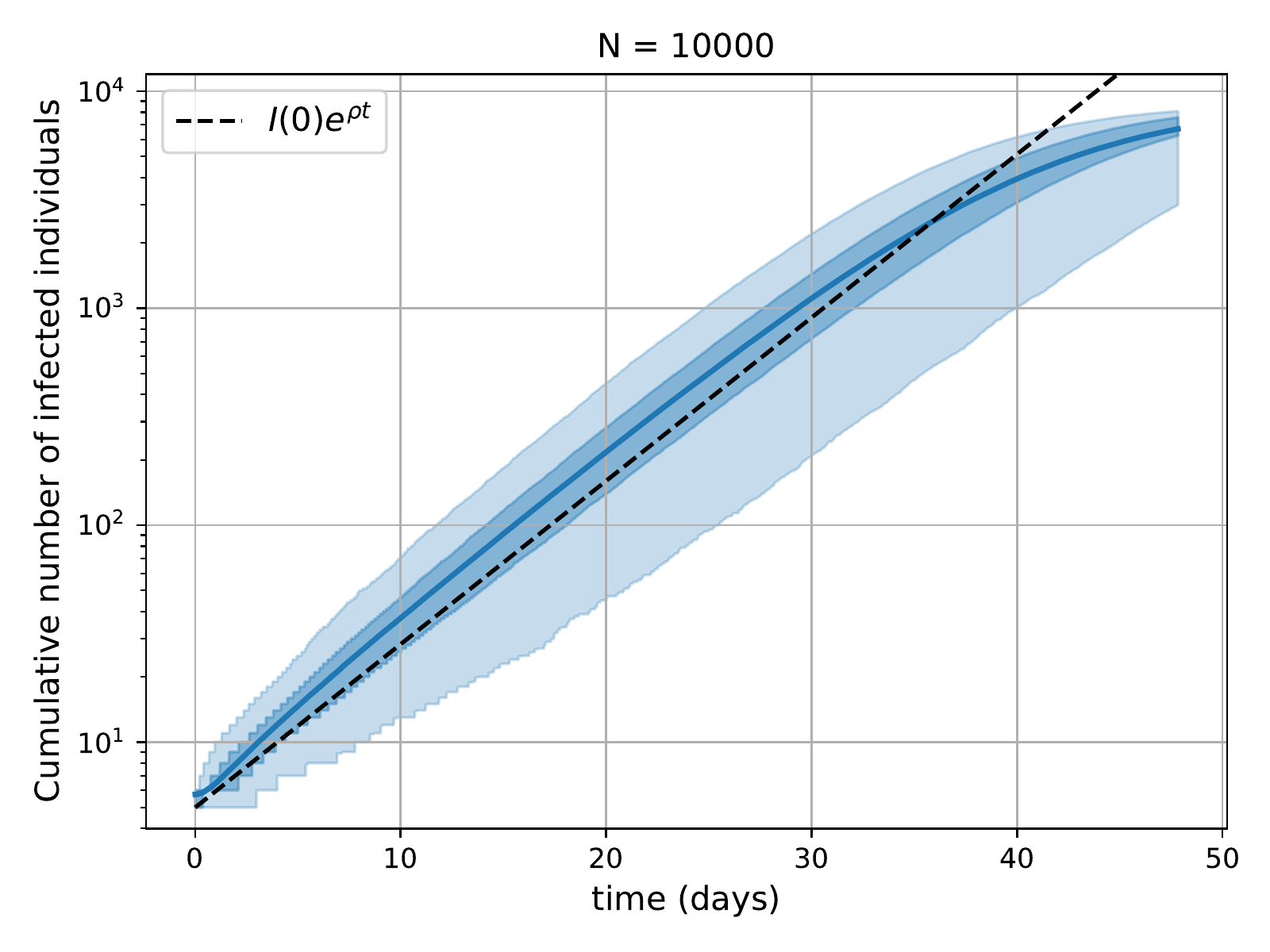}
	\caption{Exponential growth of the cumulative number of infected individuals $ t \mapsto I(0) + A^N(t) $ in the stochastic model. The figure shows the mean (blue line), 50\% envelope (dark blue region) and 95\% envelope (light blue region) of the subset of 1,000 independent simulations for which the epidemic did not go extinct at the beginning. Each simulation was started with $ I(0) = 5 $ infectious individuals and a population size of $ N = 10^4 $. The dashed black line shows the expected exponential growth during this early phase $ t \mapsto I(0) e^{\rho t} $ (the factor $ I(0) $ arises from the branching property). The mean of the sample is slightly above the dashed line, owing to the bias resulting from the fact that only trajectories leading to a major outbreak were kept.} \label{fig:exp_growth}
\end{figure}

In the case of Markovian (SIR) epidemic models, Theorem~2 of \cite{barbour1975duration} states that the full duration of the epidemic (\textit{i.e.}, the time to extinction of the I population) $ T_N $, when starting from a single infected individual, satisfies
\begin{align*}
	\P\left( T_N - a \log(N) - c \geq x \right) \to (1-q)\, \P\left( W \geq x \right), \quad N \to \infty,
\end{align*}
for some constants $ a > 0 $ and $ c \in \R $, where $ W $ is a linear combination of two independent Gumbel random variables.
Moreover, $ a = \frac{1}{\rho} + \frac{1}{\rho'} $, where $ \rho $ is the same as in Theorem~\ref{thm:early_phase} and $ \rho' $ is the rate of decay of the number of infected individuals during the final stage of the epidemic.
In addition, Theorem~1.1 in \cite{barbour_escape_2015} shows that the  stochastic process can be coupled with a branching process so that the two follow the same trajectory up to the time $ \min(T^N_0, \mathcal{T}^N_\alpha) $, for $ \alpha = 7/12 $, except on an event of asymptotical negligible probability.
Moreover, Theorem~1.1 in \cite{barbour_escape_2015} also says that, for times of the form $ \mathcal{T}^N_\alpha + t $, for $ 0 \leq t \leq \frac{1-\alpha}{\rho} \log(N) + T $, the trajectory of the stochastic process is, with high probability, at most at distance $ k N^{-\gamma} $ of the trajectory of a solution of the deterministic (non-linear) equations \eqref{seir-sirS}--\eqref{seir-sirR}, whose initial condition is of the form
\begin{align*}
	\overline{S}(0) = 1-\frac{I(0)}{N}, && \overline{I}(0) = \frac{I(0)}{N},
\end{align*}
up to a time shift which stays of the order of 1 as $ N \to \infty $, and which accounts for the stochastic fluctuations when the number of infected individuals is small.
We expect that a similar result holds in our non-Markovian setting, but proving this would require a careful comparison of the stochastic model with the deterministic model started from an $ \mathcal{O}(1/N) $ initial proportion of infected individuals over timescales of the order of $ \log(N) $, and this would go beyond the scope of this paper.

The second part of the statement (when $ R_0 < 1 $) describes what takes place when the daily number of new infections is decreasing, either because a large fraction of the population has been infected (or vaccinated) or because effective containment measures have been put into place (\textit{e.g.}, a strict lockdown).
In the former case, $ \overline{S}(t) $ is not close to one, and $ \overline{\lambda} $ should be replaced by $ \overline{S}(t) \overline{\lambda} $ in order to determine $ \rho $ and $ \overline{\lambda}_\rho $ (assuming that $ \overline{S}(t) $ varies slowly at this point).

Note that if we replace $ I(0) $, $ R(0) $, $ \overline{\lambda}^0 $ and $ F_0 $ by their values in Theorem~\ref{thm:early_phase_deterministic}, and if we set, for $ t < 0 $,
\begin{align*}
\mathfrak I (t) = \rho e^{\rho t}, && I(t) = \bm{i} e^{\rho t}, && R(t) = \bm{r} e^{\rho t},
\end{align*}
then we have
\begin{align*}
\mathfrak I(t) = \int_{-\infty}^{t} \overline{\lambda}(t-s) \mathfrak I(s) ds, &&
I(t) = \int_{-\infty}^{t} F^c(t-s) \mathfrak I(s) ds, &&
R(t) = \int_{-\infty}^{t} F(t-s) \mathfrak I(s) ds.
\end{align*}
Hence \eqref{eqlin0} can also be interpreted as the (expected) behavior of an epidemic which has started from an infinitesimal number of infected individuals very far back in the past.
Incidentally, substituting $ \mathfrak I(t) = \rho e^{\rho t} $ in the first equation yields exactly \eqref{def_rho}.

\subsection{Estimating the basic reproduction number for an ongoing an epidemic} \label{sec:estimating_contact_rate}

The function $ \overline{\lambda} $ (as well as $ F $) depends on many factors.
Some of these factors are related to the evolution of the pathogen inside an infected individual's organism, and how easily it can be transmitted to neighboring individuals, and some of these factors depend on the intensity of social contacts in the population, in particular on physical contacts between individuals when they meet (hand shaking, kiss, hug, or none of those).
This function is affected by changes in social contacts and collective behaviors, including public policies aimed at mitigating the effects of the epidemic, and the use of face masks.
For example,  during the Covid-19 pandemic, many countries implemented strict lockdowns in order to curb the spread of the disease, which drastically reduced the rate of infectious contacts and significantly affected the growth rate of the number of newly infected individuals.
In order to estimate the impact of such policies in terms of the dynamics of the epidemic, we thus need to be able to gather some information on the contact rate $ \overline{\lambda} $ from the available data at some given time.

Let us suppose that $ \overline{\lambda} $ is only known up to a constant factor $ \mu > 0 $, \textit{i.e.}, 
\begin{align*}
\overline{\lambda}(t) = \mu\, \overline{g}(t), \quad t \geq 0,
\end{align*}
where $ \mu $ is unknown but $ \overline{g} $ is known (for example from medical data on viral shedding).
We can then estimate $ \mu $ (and $ R_0 $) from the growth rate $ \rho $, which can be measured easily at the beginning of the epidemic ($ \rho = \log(2) / d $, where $ d $ is the doubling time of the daily number of newly infected individuals), using the relation \eqref{def_rho}.
The following is thus a corollary of Theorem~\ref{thm:early_phase}.

\begin{coro} \label{coro-R0}
	Let $ \rho $ be the growth rate of the number of infected individuals.
	Then
	\begin{align*}
	\mu = \left( \int_{0}^{\infty} \overline{g}(t) e^{-\rho t} ds \right)^{-1},
	\end{align*}
	and the basic reproduction number $ R_0 $ is given by
	\begin{align} \label{eqn-R0}
	R_0 = \frac{\int_{0}^{\infty} \overline{g}(t) dt}{\int_{0}^{\infty} \overline{g}(t) e^{-\rho t} dt}.
	\end{align}
\end{coro}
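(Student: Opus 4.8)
The plan is to derive the corollary as an immediate consequence of the defining relation \eqref{def_rho} for $\rho$ together with the definition \eqref{def_R0} of $R_0$, once the empirically measured growth rate has been identified with the analytic rate $\rho$. The first, and essentially only conceptual, step is this identification. Theorem~\ref{thm:early_phase} shows that, on the event of a major outbreak, the cumulative number of infections $A^N$ grows like $e^{\rho t}$ with $\rho$ given by \eqref{def_rho}, and Theorem~\ref{thm:early_phase_deterministic} confirms that the linearized deterministic system grows at exactly the same rate. Hence the doubling time $d$ observed in the data satisfies $\rho = \log(2)/d$, and the rate extracted from the data coincides with the solution of \eqref{def_rho}. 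This is what licenses us to use \eqref{def_rho} with the measured $\rho$.

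Given this, I would substitute $\overline{\lambda}(t) = \mu\,\overline{g}(t)$ into \eqref{def_rho}. Since $\mu$ is a constant, it factors out of the integral, giving
\begin{align*}
\mu \int_0^\infty \overline{g}(t)\, e^{-\rho t}\, dt = 1.
\end{align*}
Because $\overline{g}\ge 0$ is not identically zero and is bounded by $\lambda^\ast/\mu$ under Assumption~\ref{AS-lambda}, the integral $\int_0^\infty \overline{g}(t)e^{-\rho t}\,dt$ is finite and strictly positive, so this relation can be solved uniquely for $\mu$, yielding $\mu = \big(\int_0^\infty \overline{g}(t) e^{-\rho t}\, dt\big)^{-1}$.

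Finally, I would insert this value of $\mu$ into the definition \eqref{def_R0}, using $\overline{\lambda}=\mu\,\overline{g}$ once more:
\begin{align*}
R_0 = \int_0^\infty \overline{\lambda}(t)\, dt = \mu \int_0^\infty \overline{g}(t)\, dt = \frac{\int_0^\infty \overline{g}(t)\, dt}{\int_0^\infty \overline{g}(t)\, e^{-\rho t}\, dt},
\end{align*}
which is precisely \eqref{eqn-R0}. I do not expect any genuine analytic obstacle here: the computation reduces to a single substitution and factoring out the constant $\mu$. The only point that deserves care is the justification, deferred to Theorems~\ref{thm:early_phase} and \ref{thm:early_phase_deterministic}, that the growth rate visible in the data is the same $\rho$ that solves \eqref{def_rho}; without that identification the formulas would be formal manipulations rather than a usable estimator for $\mu$ and $R_0$.
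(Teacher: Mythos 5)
Your proposal is correct and follows exactly the route the paper intends: the corollary is obtained by substituting $\overline{\lambda}=\mu\,\overline{g}$ into \eqref{def_rho}, factoring out the constant $\mu$ to solve for it, and then plugging the result into \eqref{def_R0}, with the identification of the empirical growth rate with the analytic $\rho$ justified by Theorems~\ref{thm:early_phase} and \ref{thm:early_phase_deterministic}. The paper gives no separate written proof precisely because the argument is this one-line substitution, so there is nothing to add.
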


In the literature, $ (\int_{0}^{\infty} \overline{g}(t) dt )^{-1}\overline{g}(t) $ is called the generation interval distribution (it is the distribution of the interval between the time at which an individual is infected and the time at which its ``children'' are infected).
The relation \eqref{eqn-R0} is thus (2.7) in \cite{wallinga_how_2007}. Note that $R_0$ is the mean multiplicative factor of the epidemic from one generation to the next, while $\rho$ is a growth factor in continuous time. 

Note that, by the second part of Theorem~\ref{thm:early_phase_deterministic}, \eqref{eqn-R0} remains valid on any interval during which $ \overline{S}(t) \approx \overline{S}(t_0) $ remains approximately constant (but not necessarily close to 1), even when $ \rho \leq 0 $.
In that case, one should add a factor $ \overline{S}(t_0) $ in front of $ \mathfrak I(s) $ on the right hand sides of \eqref{eqlin0}, and we obtain
\begin{align*}
	\mu\, \overline{S}(t_0) \int_0^\infty\bar g(s)e^{-\rho_e s} ds = 1.
\end{align*}
Hence if we define the \textit{effective} reproduction number $ R_e $ by $ R_e := \overline{S}(t_0) \int_{0}^{\infty} \overline{\lambda}(t) dt $ (\textit{i.e.}, the average number of secondary infections when $ \overline{S}(t) = \overline{S}(t_0) $), we have
\begin{align*}
	R_e = \overline{S}(t_0) R_0 = \frac{\int_{0}^{\infty} \overline{g}(s) ds}{\int_{0}^{\infty} \overline{g}(s) e^{-\rho_e s} ds}\,.
\end{align*}

\begin{remark}
	Note that the exponent $\rho$ is a quantity which is deduced from the observation of the epidemic (it is closely related to the ``doubling time'' of the number of cases).
	The above results give us $\mu$ and $R_0$ in terms of $\rho$ and the function $\bar{g}(t)$.
	If $\lambda(t)$ is deterministic, so are $ g(t) $ and $\eta$ and thus
	$$
	R_0 = \frac{\int_\zeta^{\zeta+\eta}g(s)ds}{ \int_\zeta^{\zeta+\eta} g(s) e^{-\rho s}ds}.   
	$$
	If, in addition, $\bar g(t) \equiv g>0$ for $ \zeta \leq t < \zeta + \eta $, then this simplifies to the well--known result
	$$
	R_0= \frac{\rho\eta} {e^{-\rho\zeta}(1- e^{-\rho \eta})}. 
	$$
\end{remark}

\begin{remark}
	Theorem \ref{thm:early_phase_deterministic} and its Corollary generalize Proposition 2 and Corollary 3 in \cite{FPP2020a}, in the case $\lambda(t)= \lambda{\bf1}_{\zeta\le t<\zeta+\eta}$ for some constant $\lambda>0$,  and the pair $(\zeta,\eta)$ is an arbitrary $\R_+^2$--valued random vector.
	In that case, our formula for $R_0$ reduces to \[ R_0=\frac{\rho\, \E[\eta]}{\E[e^{-\rho\zeta}(1-e^{-\rho\eta})]}\,.\]
	In the particular case where $\zeta$ and $\eta$ are independent exponential random variables, with parameters $\nu$ and $\gamma$, 
	the above formula becomes
	\[ R_0=\left(1+\frac{\rho}{\nu}\right)\left(1+\frac{\rho}{\gamma}\right)\,.\]
	From this we deduce the formula in the classical SIR case by choosing $\nu=+\infty$, i.e., 
	$$R_0=1+\frac{\rho}{\gamma}.$$
\end{remark}

\subsection{Application to the Covid--19 epidemic} \label{subsec:covid}
We now want to explain how the type of model described in this paper can be used to model the Covid--19 epidemic. As we have seen, the increase in realism with respect to the classical ``Markovian'' models (where the infectivity is constant and fixed across the population, and the Exposed and Infectious periods follow an exponential distribution) is paid by replacing a system of ODEs by  a system of Volterra integral equations. However, we have a small benefit in that the flexibility induced by the fact that the law of $\lambda$ is arbitrary allows us to reduce the number of compartments in the model, so that we can replace a system of ODEs 
by a system of Volterra type equations of smaller dimension. 

\begin{figure}[ht]
	\centering
	\includegraphics[width=.8\linewidth]{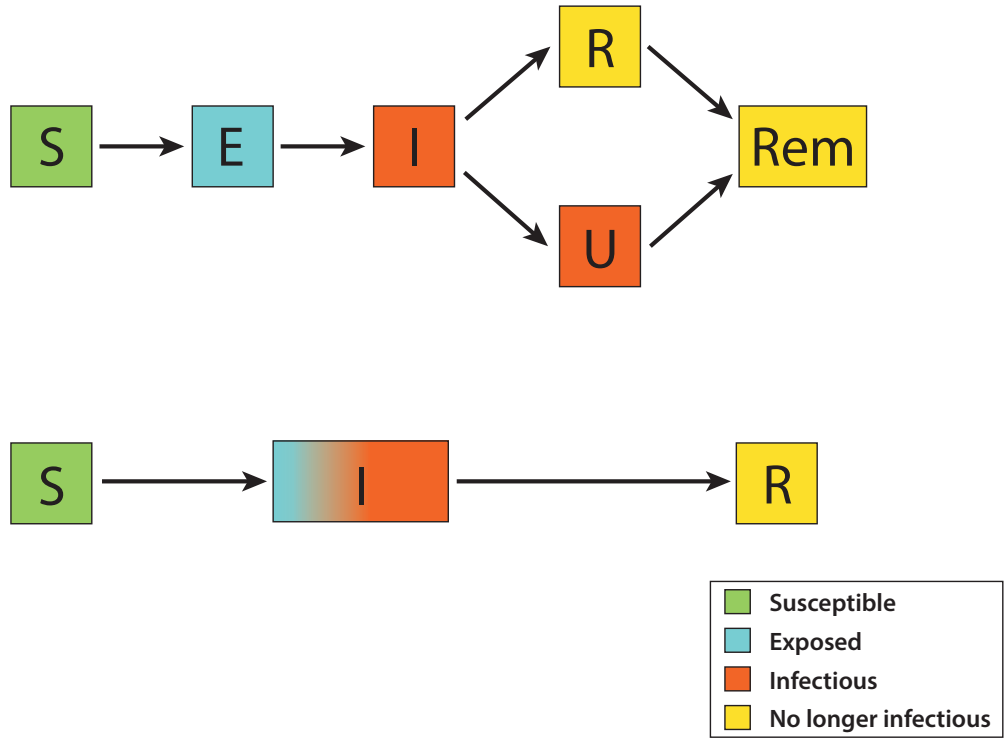}
	\caption{Flow chart of the SEIRU model of \cite{LMSW} and of our SIR model. We are able to replace the six compartments of the SEIRU model with only three compartments by using the equations described in Theorem~\ref{thm-FLLN}.}\label{fig:SEIR}
\end{figure} 

To be more specific, let us describe the SEIRU model of \cite{LMSW}. An individual who is infected is first ``Exposed'' E, then ``Infectious" I. Soon after, the infectious individual either develops significant symptoms, and then will be soon ``Reported'' R, and isolated so that he/she does not infect any more; while the alternative is that this infectious individual is asymptomatic: he/she develops no or very mild symptoms, so remains ``Unreported" U, and continues to infect susceptible individuals for a longer period. 
Both unreported and reported cases eventually enter the ``Removed" (Rem.) compartment.
In this model, there are 6 compartments: S like susceptible, E like exposed, I like infectious, R like reported, U like unreported, and Rem like removed.

Our approach allows us to have a more realistic version of this model with only 3 compartments (see Figure \ref{fig:SEIR}): S like susceptible, I like infected (first exposed, then infectious), R like removed (which includes the Reported individuals, since they do not infect any more, and will recover soon or later).  As already explained, we do not need to distinguish between the exposed and infectious, since the function $\lambda$ is allowed to remain equal to zero during a certain time interval starting from the time of infection. More importantly, since the law of $\lambda$ is allowed to be bimodal, we can accommodate in the same compartment I individuals who remain infectious for a short duration of time, and others who will remain infectious much longer (but probably with a lower infectivity). Moreover, since we know, see \cite{he2020temporal}, that the infectivity decreases after a maximum which in the case of symptomatic individuals, seems  to take place shortly before symptom onset, our varying infectivity model allows us to use a model corresponding to what the medical science tells us about this illness. Note that our version of the SEIRU model from \cite{LMSW} is the same as the one which we have already used in \cite{FPP2020a} (except that there we had to distinguish the E and the I compartments). However, the main novelty here is that the infectivity decreases after a maximum near the beginning of the infectious period.

\begin{figure}[ht]
	\centering
	\includegraphics[width=0.5\textwidth]{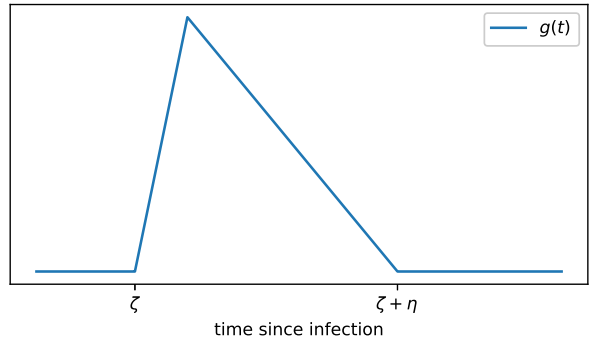}
	\caption{Profile of the function $g(t)$ used in our computation of $ R_0 $ as a function of $ \zeta $ and $ \eta $. The function increases linearly (up to a value 1 or $ \alpha $ depending on whether the individual is reported or unreported) on the interval $ [\zeta, \zeta + \eta/5] $ and then decreases linearly on $ [\zeta + \eta/5, \zeta + \eta] $.} \label{fig:g_t_profile}
\end{figure}

More precisely, we consider that $ t \mapsto g(t) $ increases linearly on the interval $ [\zeta, \zeta + \eta / 5] $, from 0 to 1 for reported individuals, and from 0 to $ \alpha $ for unreported individuals, and that it then decreases linearly to 0 on the interval $ [\zeta + \eta / 5, \zeta + \eta] $, as shown on Figure~\ref{fig:g_t_profile}.
We then take $ (X_1, X_2) $ a pair of independent Beta random variables with parameters (2, 2) and we assume that
\begin{align*}
	\zeta = 2 + 2 X_1, && \eta = \begin{cases}
	3 + X_2 & \text{ for reported individuals,} \\
	8 + 4 X_2 & \text{ for unreported individuals.}
	\end{cases}
\end{align*}
This joint law of $ (\zeta, \eta) $ is the one that was used in \cite{FPP2020a}
to study the Covid--19 epidemic in France (where the infectivity was assumed to be constant and uniform among individuals in this work), and these values are compatible with the results described in \cite{he2020temporal}.

Numerical results are presented in Figure~\ref{fig:R0} for three growth rates (0.277, -0.06, 0.032) which are derived from the doubling/halving times of the number of hospital deaths during the first wave (doubling time of 2.5 days), the first lockdown (halving time of 11.6 days) and the second wave (doubling time of 21.4 days) of the Covid--19 epidemic in France \cite{FPP2020a}. 
We note that, when $ \rho > 0 $ (resp. when $ \rho < 0 $), $R_0$ is increasing (resp. decreasing) with the proportion of unreported individuals and with $\alpha$. 
We also note that with the same durations of the exposed and infectious periods, but with $\lambda(t)$ constant, $R_0$ would be larger, which is not surprising, since in the present model the decrease of $\bar{\lambda}(t)$ reduces the effect of the factor $e^{-\rho t}$ in the integrals in the denominator, which makes $R_0>1$ for $\rho>0$.

\begin{figure}[h!]
	\centering
	\includegraphics[width=.5\textwidth]{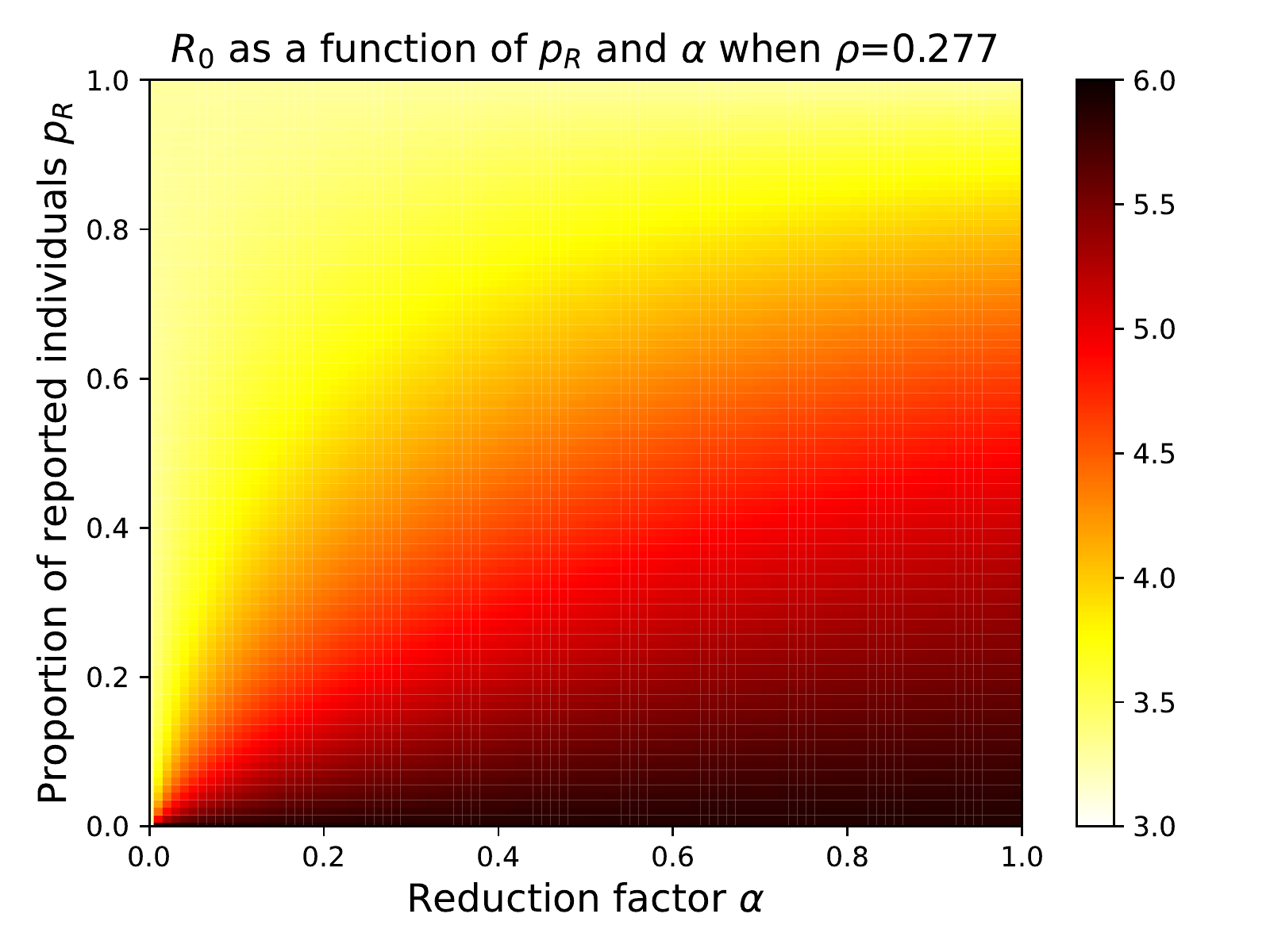}%
	\includegraphics[width=.5\textwidth]{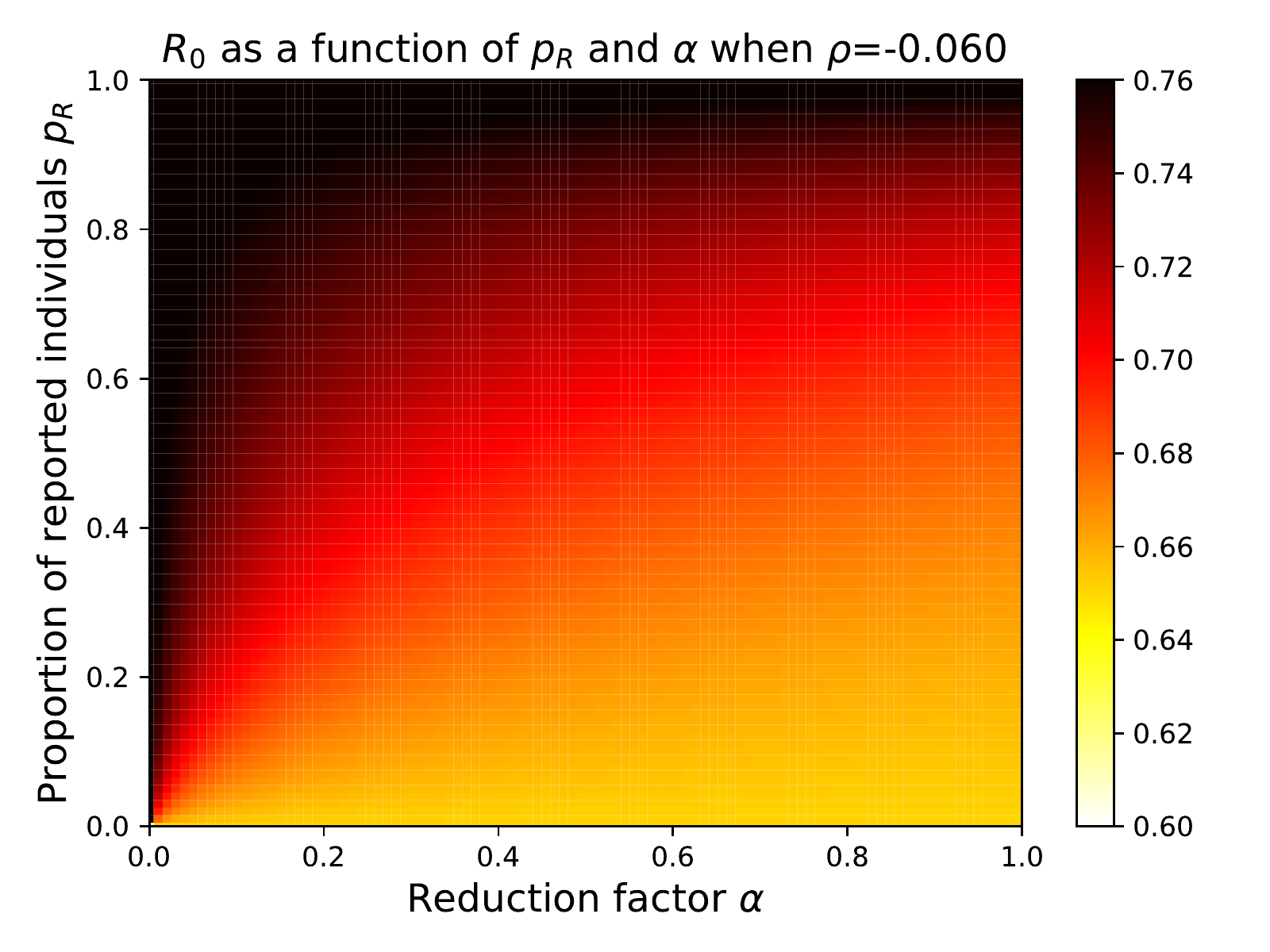}\par
	\includegraphics[width=.5\textwidth]{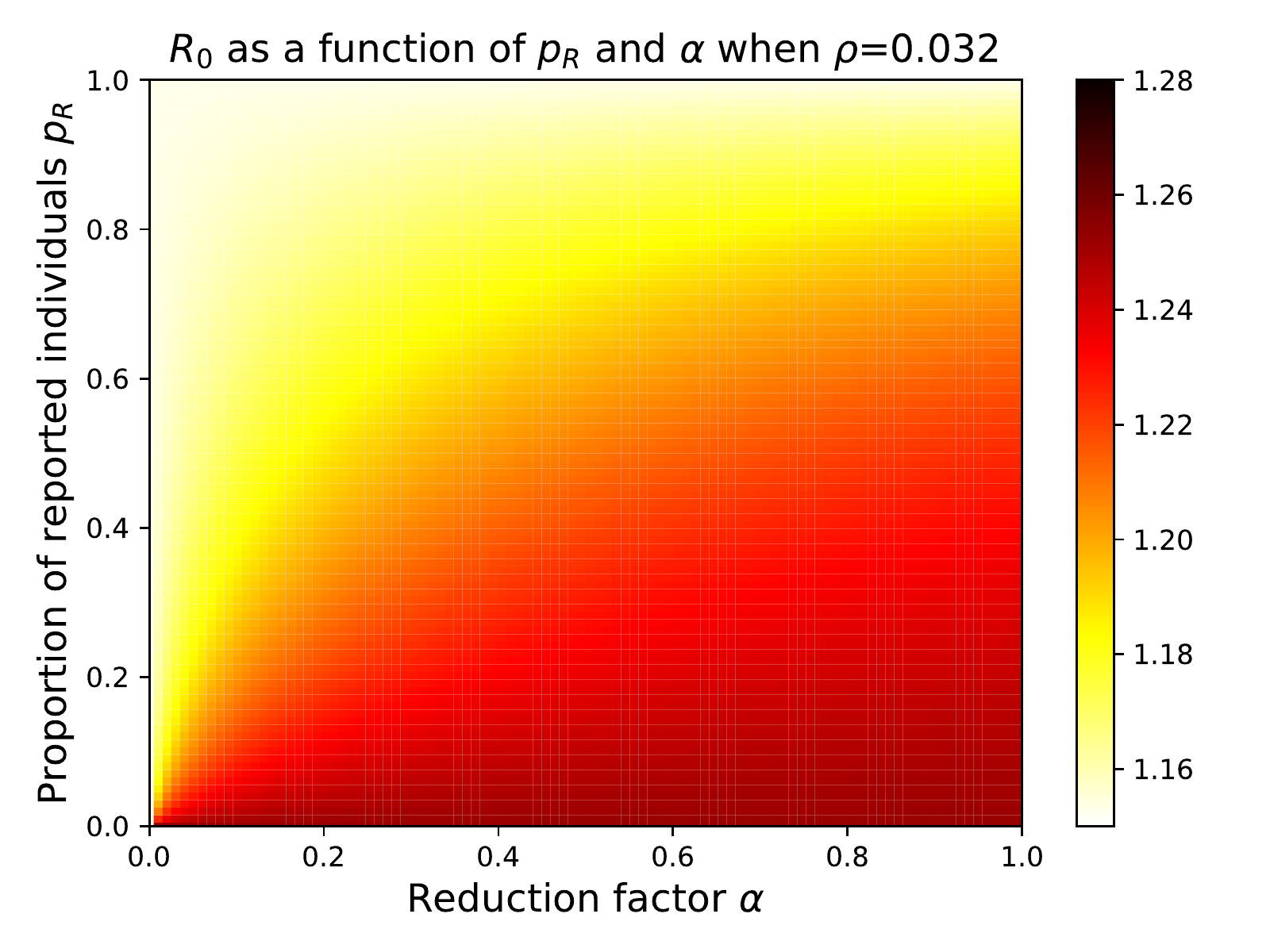}\par
	\caption{Heatmap of the value of $ R_0 $ for three growth rates: 0.277 (doubling time of 2.5 days), -0.06 (halving time of 11.6 days) and 0.032 (doubling time of 21.4 days), corresponding to three phases of the Covid--19 epidemic in France. In each graphic, the horizontal coordinate is the factor $ \alpha $ (which is the relative infectivity of unreported individuals compared to reported individuals), and the vertical coordinate is the proportion of reported individuals $ p_R $. Note that the range of values varies significantly with the growth rate $ \rho $ (from 3 up to 6 in the leftmost graphic, from 0.6 to 0.76 in the middle one and from 1.15 up to 1.28 in the rightmost graphic).} \label{fig:R0}
\end{figure}

\section{The early phase of the epidemic} \label{sec:early_phase}

The aim of this section is to prove Theorem~\ref{thm:early_phase} and Theorem~\ref{thm:early_phase_deterministic}.
In particular, we assume in this section that $\E \left[ \left( \int_0^\infty \lambda(t) dt \right)^2 \right] < \infty$ and that Assumption~\ref{AS-lambda} is satisfied.
The first step is to couple the stochastic process $ (A^N(t), \mathfrak I^N(t), t \geq 0) $ with two branching processes such that, at least up to some stopping time, the stochastic process $ A^N $ stays between the two branching processes.
To do this, we redefine the model of Subsection~\ref{subsec:model} in the following way.
Let $ (\lambda^0_k(\cdot), k \geq 1) $ be as before and let $ Q $ be a PRM on $ \R_+^2 \times D $ with intensity $ ds \otimes du \otimes P(d\lambda) $, where $ P $ is the probability distribution of $ \lambda(\cdot)$.
We then set
\begin{align*}
&\mathfrak I^N(t) := \sum_{k=1}^{I(0)} \lambda^0_k(t) + \int_{0}^{t} \int_{0}^{\infty}\int_D \lambda(t-s) {\bf 1}_{u \leq \Upsilon^N(s^-)} Q(ds, du, d\lambda), \\
&A^N(t) := \int_{0}^{t} \int_{0}^{\infty} \int_D {\bf 1}_{u \leq \Upsilon^N(s^-)} Q(ds, du, d\lambda),
\end{align*}
with $ \Upsilon^N(t) = \frac{S^N(t)}{N} \mathfrak I^N(t) $ and $ S^N(t) = N - I(0) - A^N(t) $ as before.
Then, for $ \varepsilon \in [0,1) $, we define
\begin{align*}
&\mathfrak{I}_\varepsilon(t) := \sum_{k=1}^{I(0)} \lambda^0_k(t) + \int_{0}^{t} \int_{0}^{\infty}\int_D \lambda(t-s) {\bf 1}_{u \leq (1-\varepsilon) \mathfrak{I}_\varepsilon(s^-)} Q(ds, du, d\lambda), \\
&A_\varepsilon(t) := \int_{0}^{t} \int_{0}^{\infty}\int_D {\bf 1}_{u \leq (1-\varepsilon) \mathfrak{I}_\varepsilon(s^-)} Q(ds, du, d\lambda).
\end{align*}
Recall that, for any $ \varepsilon \in [0,1) $,
\begin{align*}
T^N_\varepsilon = \inf \lbrace t \geq 0 : A^N(t) \geq \varepsilon N \rbrace.
\end{align*}

\begin{lemma} \label{lemma:coupling}
	For each $ N \geq N_0 $, the process $ (\mathfrak I^N(t), S^N(t), A^N(t), t \geq 0) $ has the same distribution as the one defined in Subsection~\ref{subsec:model}.
	Moreover,
	\begin{align} \label{upper_bound_branching}
	\forall t \geq 0, \quad \mathfrak I^N(t) \leq \mathfrak{I}_0(t), \quad A^N(t) \leq A_0(t),
	\end{align}
	and, for all $ 0 < \varepsilon < \varepsilon' $, for $ N \geq \frac{N_0+1}{\varepsilon'-\varepsilon} $, almost surely,
	\begin{align} \label{lower_bound_branching}
	\forall t \leq T^N_\varepsilon, \quad \mathfrak I^N(t) \geq \mathfrak{I}_{\varepsilon'}(t), \quad A^N(t) \geq A_{\varepsilon'}(t).
	\end{align}
\end{lemma}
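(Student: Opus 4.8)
The plan is to construct all three processes pathwise from the \emph{same} marked Poisson random measure $Q$ and the same initial infectivity functions $\lambda^0_1,\dots,\lambda^0_{I(0)}$, and then to compare them through a monotone coupling. The key observation is that, since $\frac{S^N(t)}{N}\le 1$ and $\mathfrak I^N(t)\le (I(0)+A^N(t))\lambda^\ast\le N\lambda^\ast$, the acceptance threshold $\Upsilon^N(s^-)$ never exceeds $N\lambda^\ast$; hence only the points of $Q$ lying in $[0,T]\times[0,N\lambda^\ast]\times D$ can trigger an infection, and these are a.s.\ finitely many on any bounded time interval $[0,T]$. Listing them in increasing order of their time coordinate, $(s_1,u_1,\lambda^{(1)}),(s_2,u_2,\lambda^{(2)}),\dots$, each of $\mathfrak I^N,\mathfrak I_0,\mathfrak I_{\varepsilon'}$ is built by the same recursion: starting from $\sum_{k=1}^{I(0)}\lambda^0_k(\cdot)$, at each $s_n$ one tests whether $u_n$ lies below the corresponding (left-limit) threshold and, if so, adds the nonnegative function $\lambda^{(n)}(\cdot-s_n)$. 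This makes the three processes simultaneously well defined and sets the stage for the comparison.

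For the distributional identity, I would invoke the marking theorem for Poisson random measures: the projection of $Q$ onto $\R_+^2$ is a standard PRM, and conditionally on its atoms the marks are i.i.d.\ with law $P$, independent of the atom locations. Because the acceptance test at each $s_n$ is a predictable functional of the past (it depends on $\Upsilon^N(s_n^-)$, hence only on the previously revealed atoms and marks), the marks attached to the \emph{accepted} points form, in their order of occurrence, an i.i.d.\ sequence with law $P$ that is independent of the driving thinning. This is exactly the structure of Subsection~\ref{subsec:model}, where the infection times arise from thinning a standard PRM by $\Upsilon^N$ and the functions $\lambda_i$ are i.i.d.\ $\sim P$ and independent of $Q$ and the initial data. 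Hence $(\mathfrak I^N,S^N,A^N)$ has the law claimed.

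For the two bounds I would argue by induction over $n$ that the set of points accepted by the $N$-process is sandwiched between those accepted by the $\varepsilon'$- and the $0$-processes, which, since all added functions are nonnegative, immediately yields the stated inequalities between the $\mathfrak I$'s (and hence between the $A$'s, which merely count accepted points). Suppose the comparison holds up to $s_n^-$. For the upper bound, $\Upsilon^N(s_n^-)=\frac{S^N(s_n^-)}{N}\mathfrak I^N(s_n^-)\le \mathfrak I^N(s_n^-)\le \mathfrak I_0(s_n^-)$, so any point accepted by the $N$-process is accepted by the $0$-process; starting from the common initial condition this gives $\mathfrak I^N\le \mathfrak I_0$ and $A^N\le A_0$ on all of $\R_+$. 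For the lower bound, restrict to $s_n\le T^N_\varepsilon$: then $A^N(s_n^-)<\varepsilon N$, so $S^N(s_n^-)>N-N_0-\varepsilon N$, and the hypothesis $N\ge \frac{N_0+1}{\varepsilon'-\varepsilon}$ gives $\frac{S^N(s_n^-)}{N}>1-\varepsilon'$; combined with the inductive hypothesis $\mathfrak I^N(s_n^-)\ge \mathfrak I_{\varepsilon'}(s_n^-)$ this yields $\Upsilon^N(s_n^-)\ge(1-\varepsilon')\mathfrak I^N(s_n^-)\ge(1-\varepsilon')\mathfrak I_{\varepsilon'}(s_n^-)$, the acceptance threshold of the $\varepsilon'$-process, so every $\varepsilon'$-infection is an $N$-infection and the reverse inequalities follow on $[0,T^N_\varepsilon]$.

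The main obstacle is the self-referential nature of the three defining equations: because each acceptance threshold depends on the very process it generates, one cannot compare the integrals directly and must instead propagate the comparison jump by jump, being careful that the threshold ordering only has to be checked at the left limits $s_n^-$, where the inductive hypothesis is already available, and that for the lower bound the favorable bound $\frac{S^N}{N}\ge 1-\varepsilon'$ is valid only up to the stopping time $T^N_\varepsilon$. Verifying that the mark sequence remains i.i.d.\ under the predictable selection (Part 1) is the other point requiring care, and is handled by the marking theorem.
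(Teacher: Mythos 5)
Your proposal is correct and takes essentially the same route as the paper: both couple the three processes through a common marked PRM and propagate the comparison via the acceptance thresholds evaluated at left limits, the paper phrasing this as a contradiction at the first time the ordering fails ($\tau_0:=\inf\{t:\mathfrak I^N(t)>\mathfrak I_0(t)\}$) where you phrase it as an induction over the ordered atoms --- the same argument in two guises. The only substantive difference is that the paper declares the distributional identity ``straightforward'' while you spell out the marking/thinning justification, which is a welcome but not divergent addition.
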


We note that, even though the distribution of $ (\mathfrak I^N, A^N, S^N) $ is the same as in Subsection~\ref{subsec:model}, this construction yields a different coupling between $ (\mathfrak I^{N_1}, A^{N_1}, S^{N_1}) $ and $ (\mathfrak I^{N_2}, A^{N_2}, S^{N_2}) $ for $ N_1 \neq N_2 $.
\begin{proof}
	The fact that this new construction does not change the law of the process $ (\mathfrak I^N, S^N, A^N) $ is straightforward.
	For the second part of the statement,
		 let
		\begin{align*}
		\tau_0 := \inf \lbrace t \geq 0 : \mathfrak I^N(t) > \mathfrak{I}_0(t) \rbrace.
		\end{align*}
		By construction, if $ \tau_0 < \infty $, there exist $ s \leq \tau_0 $ and $ u > 0 $ such that
		\begin{align*}
			Q\left( \lbrace s \rbrace \times \lbrace u \rbrace \times D \right) = 1
		\end{align*}
		and 
		\begin{align*}
			\mathfrak I_0(s^-) < u \leq \Upsilon^N(s^-).
		\end{align*}
		Since $ \Upsilon^N(t) \leq \mathfrak I^N(t) $, this implies $ \mathfrak I_0(s^-) < \mathfrak I^N(s^-) $ for some $ s \leq \tau_0 $.
		This contradicts the definition of $ \tau_0 $, hence $ \tau_0 = + \infty $ and $ \mathfrak I^N(t) \leq \mathfrak I_0(t) $ for all $ t \geq 0 $.
		By the definition of $ A^N $ and $ A_0 $, this also implies $ A^N(t) \leq A_0(t) $ for all $ t \geq 0 $.
	
	For the lower bound \eqref{lower_bound_branching}, we note that, for $ t \leq T^N_\varepsilon $,
	\begin{align*}
	\Upsilon^N(t) &= \left( 1-\frac{I(0) + A^N(t)}{N} \right) \mathfrak I^N(t) \\
	&\geq \left( 1- \frac{N_0+1}{N} - \varepsilon \right) \mathfrak I^N(t) \\
	&\geq (1-\varepsilon') \mathfrak I^N(t),
	\end{align*}
	for $ N \geq (N_0+1)/(\varepsilon'-\varepsilon) $.
	The lower bound then follows by a similar argument as above.
\end{proof}

We note that the process $ A_\varepsilon(\cdot) $ does not depend on $ N $, and that it is a branching process which belongs to the class of processes studied in \cite{crump_general_1968,crump_general_1969}.
The following result is then Theorem~3.2 in \cite{crump_general_1969}.

\begin{lemma} \label{lemma:exp_branching}
	Under Assumptions~\ref{AS-lambda} and \ref{AS-Lambda-Moment2},
	for each $ \varepsilon \in [0,1) $, there exists a random variable $ W_\varepsilon \geq 0 $ such that
	\begin{align*}
	A_\varepsilon(t) e^{-\rho_\varepsilon t} \to W_\varepsilon, \quad \text{ almost surely as } t \to \infty,
	\end{align*}
	where $ \rho_\varepsilon \in \R $ is the (unique) solution to
	\begin{align} \label{def_rho_eps}
	(1-\varepsilon) \int_{0}^{\infty} \overline{\lambda}(t) e^{-\rho_\varepsilon t} dt = 1.
	\end{align}
\end{lemma}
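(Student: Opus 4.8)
The plan is to recognise the process $A_\varepsilon(\cdot)$ as a general age-dependent (Crump--Mode--Jagers) branching process and then to apply Theorem 3.2 of \cite{crump_general_1969} directly; all the work lies in making the branching description precise and in checking the hypotheses of that theorem. First I would give the genealogical reinterpretation of $(\mathfrak I_\varepsilon, A_\varepsilon)$. In the defining equations the total birth intensity is $(1-\varepsilon)\mathfrak I_\varepsilon(s^-) = (1-\varepsilon)\sum_i \lambda_i(s-\tau_i)$, a sum over the individuals already present of their own infectivity evaluated at their current age. The superposition and thinning properties of the driving PRM $Q$ then let me attach each birth to a unique parent so that, conditionally on the family of infectivity functions, every individual generates its offspring according to an independent inhomogeneous Poisson process with age-intensity $(1-\varepsilon)\lambda(a)\,da$. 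Since this attribution only adds genealogical labels and does not alter the law of $A_\varepsilon$, it suffices to prove the claim for the equivalent process built directly: $I(0)$ ancestors reproducing by independent Cox processes directed by $(1-\varepsilon)\lambda^0_k(\cdot)$, and each descendant reproducing by an independent Cox process directed by $(1-\varepsilon)\lambda(\cdot)$, with $A_\varepsilon(t)$ counting the births in $(0,t]$. This is exactly a process of the type studied in \cite{crump_general_1968,crump_general_1969}.

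Next I would compute the mean reproduction measure and the Malthusian parameter. Conditionally on its infectivity function a generic individual has offspring point process Poisson with intensity $(1-\varepsilon)\lambda(a)\,da$, so the mean reproduction measure is $\mu(da)=(1-\varepsilon)\bar\lambda(a)\,da$, of total mass $(1-\varepsilon)R_0$. This measure is absolutely continuous, hence non-lattice, and finite because Assumption \ref{AS-Lambda-Moment2} gives $R_0=\int_0^\infty\bar\lambda(a)\,da<\infty$. The Malthusian parameter is the root of $\int_0^\infty e^{-\rho a}\mu(da)=1$, which is precisely \eqref{def_rho_eps}; its existence and uniqueness follow from the continuity and strict monotonicity of $\rho\mapsto(1-\varepsilon)\int_0^\infty e^{-\rho a}\bar\lambda(a)\,da$, exactly as recorded for \eqref{def_rho}. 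The content of the lemma is the supercritical regime $\rho_\varepsilon>0$ (equivalently $(1-\varepsilon)R_0>1$), which is the only one invoked through the coupling of Lemma \ref{lemma:coupling}.

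The remaining task, and the only place where Assumption \ref{AS-Lambda-Moment2} is essential, is to verify the second-moment hypothesis of Theorem 3.2 of \cite{crump_general_1969}. The number of offspring of a generic individual is, conditionally on $\lambda$, Poisson with mean $(1-\varepsilon)\int_0^\infty\lambda(a)\,da$, so its second moment equals $\E\big[((1-\varepsilon)\int_0^\infty\lambda(a)\,da)^2+(1-\varepsilon)\int_0^\infty\lambda(a)\,da\big]$, which is finite precisely by Assumption \ref{AS-Lambda-Moment2}; the analogous bound for the ancestors uses the companion moment condition on $\lambda^0$. Together with the finiteness and non-lattice character of $\mu$, this matches the hypotheses of the cited theorem and yields $e^{-\rho_\varepsilon t}A_\varepsilon(t)\to W_\varepsilon$ almost surely for a single ancestor. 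To pass to the $I(0)\le N_0$ ancestors present here I would decompose $A_\varepsilon=\sum_{k=1}^{I(0)}A_\varepsilon^{(k)}$ into the independent subtrees rooted at each ancestor; the differing reproduction law $(1-\varepsilon)\lambda^0_k$ of an ancestor affects only the limit variable $W_\varepsilon^{(k)}$ and not the common rate $\rho_\varepsilon$, which is fixed by $\bar\lambda$ through the descendants, so summing gives $e^{-\rho_\varepsilon t}A_\varepsilon(t)\to W_\varepsilon:=\sum_k W_\varepsilon^{(k)}\ge 0$.

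I expect the main obstacle to be the first step: rigorously extracting the i.i.d.\ Crump--Mode--Jagers genealogy, with conditionally independent Poisson offspring processes, from the self-referential single-PRM construction of $(\mathfrak I_\varepsilon, A_\varepsilon)$. Once the process is correctly identified as a general age-dependent branching process with mean reproduction measure $\mu$, the rest is the routine computation of $\mu$, the verification of the second-moment condition via Assumption \ref{AS-Lambda-Moment2}, and a citation of Theorem 3.2 of \cite{crump_general_1969}.
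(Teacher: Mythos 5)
Your proposal is correct and follows essentially the same route as the paper: identify $A_\varepsilon$ as a Crump--Mode--Jagers branching process with conditionally Poisson offspring of age-intensity $(1-\varepsilon)\lambda(a)\,da$, and verify the hypotheses of Theorem~3.2 of \cite{crump_general_1969}, the key point being that the offspring number has second moment $\E\big[(1-\varepsilon)\int_0^\infty\lambda(t)\,dt\big]+\E\big[\big((1-\varepsilon)\int_0^\infty\lambda(t)\,dt\big)^2\big]<\infty$ by Assumption~\ref{AS-Lambda-Moment2}. The paper's proof is terser (it also records the integrability $\int_0^\infty(\overline{\lambda}(t))^p\,dt\le(\lambda^*)^{p-1}R_0$ from the uniform bound $\lambda\le\lambda^*$, which your finiteness/non-lattice remark plays the role of), but the substance is the same.
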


\begin{proof}
	We need to check the conditions of Theorem~3.2 in \cite{crump_general_1969}.
	First, since $ \lambda(t) \leq \lambda^* $, for any $ p > 1 $,
	\begin{align*}
		\int_{0}^{\infty} (\overline{\lambda}(t))^p dt \leq (\lambda^*)^{p-1} \int_{0}^{\infty} \overline{\lambda}(t) dt = (\lambda^*)^{p-1} R_0,
	\end{align*}
	which we have assumed to be finite.
	On the other hand, if $ N $ is the number of offsprings of a given individual, then, using the properties of the Poisson distribution,
	\begin{align*}
		\E \left[ N^2 \right] = \E \left[ \int_{0}^{\infty} \lambda(t) dt \right] + \E \left[ \left( \int_{0}^{\infty} \lambda(t) dt\right)^2 \right]  
		<\infty,
	\end{align*}
	by assumption (this is also true if the individual was initially infected, replacing $ \lambda $ by $ \lambda^0 $ above).
	This concludes the proof.
\end{proof}

\begin{remark}
	The condition $\varepsilon \le 1-\frac{1}{R_0} $ in Theorem \ref{thm:early_phase} ensures that there exists a positive solution $\rho_\varepsilon>0$ to the equation \eqref{def_rho_eps}, i.e., that the branching process 
	$ A_\varepsilon(\cdot) $ is supercritical. This will be used in the proof of Theorem \ref{thm:early_phase}. See also Remark~\ref{rk:eps_R0}.
\end{remark}

\begin{lemma} \label{lemma:rho_eps}
	If $ \rho $ satisfies \eqref{def_rho} and $ \rho_\varepsilon $ is given by \eqref{def_rho_eps}, then, for all $ \varepsilon \in (0,1) $,
	\begin{align*}
	0 \leq \rho - \rho_\varepsilon \leq \frac{\varepsilon}{1-\varepsilon} \left( \int_{0}^{\infty} \overline{\lambda}(t) t e^{-\rho t} dt \right)^{-1}.
	\end{align*}
\end{lemma}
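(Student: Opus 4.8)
The plan is to derive both inequalities from the monotonicity and convexity of the Laplace transform of $\overline{\lambda}$. I would set
\[
M(r) := \int_{0}^{\infty} \overline{\lambda}(t)\, e^{-rt}\, dt,
\]
defined for those $r \in \R$ where the integral is finite. Since $\overline{\lambda} \geq 0$ and is not a.e.\ zero, $M$ is strictly decreasing on its domain. The defining equations \eqref{def_rho} and \eqref{def_rho_eps} read precisely $M(\rho) = 1$ and $M(\rho_\varepsilon) = (1-\varepsilon)^{-1}$. Because $(1-\varepsilon)^{-1} > 1$ for every $\varepsilon \in (0,1)$, we have $M(\rho_\varepsilon) > M(\rho)$, and strict monotonicity forces $\rho_\varepsilon < \rho$; this is the lower bound $\rho - \rho_\varepsilon \geq 0$.

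For the upper bound I would subtract the two identities to obtain
\[
\frac{\varepsilon}{1-\varepsilon} = M(\rho_\varepsilon) - M(\rho) = \int_{0}^{\infty} \overline{\lambda}(t)\left( e^{-\rho_\varepsilon t} - e^{-\rho t} \right) dt.
\]
Writing $e^{-\rho_\varepsilon t} - e^{-\rho t} = e^{-\rho t}\big( e^{(\rho - \rho_\varepsilon) t} - 1 \big)$ and invoking the elementary inequality $e^x - 1 \geq x$ (here with $x = (\rho - \rho_\varepsilon) t \geq 0$), the integrand is bounded below by $(\rho - \rho_\varepsilon)\, t\, \overline{\lambda}(t)\, e^{-\rho t}$. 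Hence
\[
\frac{\varepsilon}{1-\varepsilon} \geq (\rho - \rho_\varepsilon) \int_{0}^{\infty} \overline{\lambda}(t)\, t\, e^{-\rho t}\, dt,
\]
and dividing by the (strictly positive) integral yields exactly the claimed upper bound. An alternative but equivalent route would be the mean value theorem applied to $M$, using $-M'(\xi) = \int_0^\infty \overline{\lambda}(t) t e^{-\xi t} dt$ for some $\xi \in (\rho_\varepsilon, \rho)$ together with $e^{-\xi t} \geq e^{-\rho t}$; the convexity inequality $e^x-1\ge x$ is cleaner and avoids selecting an intermediate point.

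I would note that $\int_0^\infty \overline{\lambda}(t)\, t\, e^{-\rho t}\, dt$ is strictly positive since $\overline{\lambda}$ is positive on a set of positive measure (otherwise $R_0 = 0$ and there is no epidemic), and that it is finite in the relevant supercritical regime $\rho > 0$ because $\overline{\lambda} \leq \lambda^*$ gives $\int_0^\infty \lambda^* t\, e^{-\rho t}\, dt = \lambda^*/\rho^2 < \infty$; the division is legitimate whenever this quantity is finite and positive. There is no serious obstacle here: the entire argument rests on the convexity of $t \mapsto e^{-rt}$ encoded in $e^x - 1 \geq x$, combined with the monotonicity of $M$. The only points needing a word of care are the finiteness and positivity of the weighted integral, and the existence and uniqueness of the solutions $\rho, \rho_\varepsilon$ to the defining equations, both of which are granted by the hypotheses and the preceding results (in particular Lemma~\ref{lemma:exp_branching} provides $\rho_\varepsilon$).
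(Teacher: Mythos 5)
Your proof is correct and follows essentially the same route as the paper: both derive $\frac{\varepsilon}{1-\varepsilon}=\int_0^\infty\overline{\lambda}(t)\left(e^{-\rho_\varepsilon t}-e^{-\rho t}\right)dt$ from the two defining equations, deduce $\rho\geq\rho_\varepsilon$ from positivity/monotonicity, and use the pointwise bound $e^{-\rho_\varepsilon t}-e^{-\rho t}\geq t e^{-\rho t}(\rho-\rho_\varepsilon)$ to conclude. Your added remarks on the finiteness and strict positivity of $\int_0^\infty\overline{\lambda}(t)\,t\,e^{-\rho t}\,dt$ are sensible details the paper leaves implicit.
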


\begin{proof}
	From the definitions of $ \rho $ and $ \rho_\varepsilon $,
	\begin{align*}
	\int_{0}^{\infty} \overline{\lambda}(t) \left( e^{-\rho_\varepsilon t} - e^{-\rho t} \right) dt = \frac{\varepsilon}{1-\varepsilon}.
	\end{align*}
	Hence it is clear that $ \rho \geq \rho_\varepsilon $.
	In addition, $ e^{-\rho_\varepsilon t} - e^{-\rho t} \geq t e^{-\rho t} (\rho-\rho_\varepsilon) $, from which the stated inequality follows.
\end{proof}

\begin{lemma} \label{lemma:continuity_W_eps}
	Let $ (W_\varepsilon, \varepsilon \in [0,1)) $ be the family of random variables defined in Lemma~\ref{lemma:exp_branching}.
	Then
	\begin{align*}
		\lim_{\varepsilon \downarrow 0} \P (W_\varepsilon = 0) = \P (W_0 = 0).
	\end{align*}
\end{lemma}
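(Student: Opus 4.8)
The plan is to identify $\lbrace W_\varepsilon = 0 \rbrace$ with the extinction event of the branching process $A_\varepsilon(\cdot)$ and to reduce the claim to the continuity, at $\varepsilon = 0$, of the corresponding extinction probability, which I characterize through a fixed-point equation. Under Assumption~\ref{AS-Lambda-Moment2}, Theorem~3.2 of \cite{crump_general_1969} (the result invoked in Lemma~\ref{lemma:exp_branching}) gives that $W_\varepsilon > 0$ almost surely on survival and $W_\varepsilon = 0$ on extinction, so $\P(W_\varepsilon = 0)$ is exactly the extinction probability of $A_\varepsilon$. Writing $\Lambda := \int_0^\infty \lambda(t)\,dt$ and $\Lambda^0_k := \int_0^\infty \lambda^0_k(t)\,dt$, the number of children of a typical individual is a mixed Poisson variable with random parameter $(1-\varepsilon)\Lambda$, so its probability generating function is
\[
f_\varepsilon(s) = \E\big[\exp(-(1-\varepsilon)(1-s)\Lambda)\big], \qquad s \in [0,1].
\]
Let $q_\varepsilon \in [0,1)$ denote the smallest fixed point of $f_\varepsilon$; it is strictly less than $1$ because $f_\varepsilon'(1) = (1-\varepsilon)R_0 > 1$ for $\varepsilon < 1-\tfrac{1}{R_0}$, which is the regime of Theorem~\ref{thm:early_phase}. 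Conditioning on the $I(0) \le N_0$ ancestors and their infectivities, the total extinction probability factorizes as
\[
\P(W_\varepsilon = 0) = \E\Big[\textstyle\prod_{k=1}^{I(0)} \exp\big(-(1-\varepsilon)(1-q_\varepsilon)\Lambda^0_k\big)\Big].
\]

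It thus suffices to prove that $q_\varepsilon \to q_0$ as $\varepsilon \downarrow 0$: the convergence of $\P(W_\varepsilon = 0)$ to $\P(W_0 = 0)$ then follows by dominated convergence, since the integrand is bounded by $1$, the product ranges over the finite set $\lbrace 1,\dots,I(0)\rbrace$, and $\Lambda^0_k < \infty$ almost surely because $\lambda^0_k \le \lambda^*$ vanishes beyond $\zeta^0_k+\eta^0_k$. For the convergence of the fixed points I would first record two monotonicity facts. Since $(1-s)\Lambda \ge 0$, the map $\varepsilon \mapsto f_\varepsilon(s)$ is nondecreasing for each fixed $s$, and a comparison of these convex generating functions (equivalently, the monotone coupling of $A_\varepsilon$ in $\varepsilon$ built on the common $Q$ and $\lambda^0_k$, exactly as in the proof of Lemma~\ref{lemma:coupling}) shows that $\varepsilon \mapsto q_\varepsilon$ is nondecreasing, with $q_0 \le q_\varepsilon$ for every $\varepsilon > 0$. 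Consequently $q_* := \lim_{\varepsilon \downarrow 0} q_\varepsilon = \inf_{\varepsilon > 0} q_\varepsilon$ exists and satisfies $q_0 \le q_* \le q_{\varepsilon_0} < 1$ for any fixed $\varepsilon_0 \in (0, 1-\tfrac{1}{R_0})$.

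Next I would pass to the limit in the identity $f_\varepsilon(q_\varepsilon) = q_\varepsilon$. The convergence $f_\varepsilon \to f_0$ is uniform on $[0,1]$ with the elementary bound $|f_\varepsilon(s) - f_0(s)| \le \varepsilon\,\E[\Lambda] = \varepsilon R_0$, obtained from $|e^{-a} - e^{-b}| \le |a-b|$ with $a = (1-\varepsilon)(1-s)\Lambda$ and $b = (1-s)\Lambda$. Combining this uniform bound with the continuity of $f_0$ and with $q_\varepsilon \to q_*$ yields $f_0(q_*) = q_*$, so $q_*$ is a fixed point of $f_0$. Since $f_0$ is convex with $f_0'(1) = R_0 > 1$, its only fixed point in $[0,1)$ is $q_0$; as $q_0 \le q_* < 1$, I conclude $q_* = q_0$, which is what remained to be shown.

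The main obstacle is precisely this last step: a priori the decreasing limit $q_*$ of the fixed points could equal the trivial fixed point $1$ of $f_0$ rather than $q_0$, which is the usual discontinuity of the extinction probability as a branching process degenerates toward criticality. What rules this out here is the strict supercriticality available for every fixed $\varepsilon_0 > 0$ in the regime $\varepsilon < 1-\tfrac{1}{R_0}$, which forces $q_{\varepsilon_0} < 1$ and hence, by the monotonicity of $\varepsilon \mapsto q_\varepsilon$, keeps the whole limit $q_*$ bounded away from $1$.
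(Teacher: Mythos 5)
Your proof is correct and follows essentially the same route as the paper's: identify $\P(W_\varepsilon=0)$ with the extinction probability of the branching process, express it via the fixed point $q_\varepsilon$ of the conditionally-Poisson offspring generating function, show $q_\varepsilon\to q_0$ from the (locally) uniform convergence of the generating functions, and conclude by dominated convergence. The only difference is that you spell out the step the paper dismisses as ``straightforward'' --- in particular the monotonicity of $\varepsilon\mapsto q_\varepsilon$, which keeps the limit of the fixed points away from the trivial fixed point $1$ --- which is a useful clarification but not a different argument.
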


\begin{proof}
	In \cite{crump_general_1969}, it is shown that $ \P(W_\varepsilon = 0) $ is the probability of extinction of a branching process in which each individual born after time 0 leaves a conditionally Poisson number of offsprings with parameter $ (1-\varepsilon) \int_{0}^{\infty} \lambda(t) dt$.
	Thus if $ X_0 $ denote the random variable corresponding to the number of offsprings of the $ I(0) $ individuals alive at time 0, then
	\begin{align} \label{eq_P_W_eps_0}
		\P(W_\varepsilon = 0) = \E \left[ q_\varepsilon^{X_0} \right],
	\end{align}
	where $ q_\varepsilon $ is the unique fixed point in $ (0,1) $ of the function $ s \mapsto h_\varepsilon(s) $ defined by
	\begin{align*}
		h_\varepsilon(s) := \E \left[ s^{X_\varepsilon} \right],
	\end{align*}
	where $ X_\varepsilon $ is conditionally Poisson with parameter $ (1-\varepsilon) \int_{0}^{\infty} \lambda(t) dt $.
	It is then straightforward to see that $ h_\varepsilon $ converges to $ h_0 $ locally uniformly when $ \varepsilon \downarrow 0 $, and, as a result, $ q_\varepsilon \to q_0 $.
	We then conclude from \eqref{eq_P_W_eps_0} and the dominated convergence theorem.
\end{proof}

We can now prove Theorem~\ref{thm:early_phase}.

\begin{proof}[Proof of Theorem~\ref{thm:early_phase}]
	We begin by a lower bound on $ T^N_\varepsilon $.
	By \eqref{upper_bound_branching}, for any $ \delta \in (0,1) $,
	\begin{align*}
	A^N\left( \frac{1-\delta}{\rho} \log(N) \right) \leq A_0\left( \frac{1-\delta}{\rho} \log(N) \right).
	\end{align*}
	Noting that $ \rho_0 = \rho $, by Lemma~\ref{lemma:exp_branching}, almost surely, for all $ N $ large enough,
	\begin{align*}
	A_0\left( \frac{1-\delta}{\rho} \log(N) \right) \leq N^{1-\delta} (W_0 + \delta).
	\end{align*}
	But $ N^{1-\delta}(W_0 + \delta) < \varepsilon N $ for $ N $ large enough.
	It follows that, for any $ \delta \in (0,1) $,
	\begin{align} \label{lower_bound_T_eps}
	\liminf_{N \to \infty} \frac{T^N_\varepsilon}{\log(N)} \geq \frac{1-\delta}{\rho}, \quad \text{ almost surely.}
	\end{align}
	By the same argument, for any $ \delta \in (0,\alpha) $ and $ \alpha \in (0,1) $,
	\begin{align} \label{bound_inf_T_alpha}
	\liminf_{N \to \infty} \frac{\mathcal{T}^N_\alpha}{\log(N)} \geq \frac{\alpha - \delta}{\rho}.
	\end{align}
	
	On the event $ \lbrace W_0 = 0 \rbrace $, the branching process $ (A_0, \mathfrak{I}_0) $ goes extinct (\textit{i.e.}, $ \mathfrak{I}_0(t) = 0 $ for all $ t $ large enough), and
	\begin{align*}
	\lim_{t \to \infty} A_0(t) < +\infty.
	\end{align*}
	As a result, for any $ t > 0 $,
	\begin{align*}
	A^N\left( t \log(N) \right) &\leq A_0(t\log(N)) \\
	&\leq \lim_{s \to \infty} A_0(s).
	\end{align*}
	Hence $ \mathcal{T}^N_\alpha > t \log(N) $ for all $ t > 0 $ for all $ N $ such that $ N^\alpha > \lim_{t \to \infty} A_0(t) $.
	Hence
	\begin{align}\label{bound_inf_T_alpha_ext}
	\liminf_{N \to \infty} \frac{\mathcal{T}^N_\alpha}{\log(N)} = +\infty,
	\end{align}
	almost surely on the event $ \lbrace W_0 = 0 \rbrace $ for any $ \alpha \in (0,1) $.
	Since $ T^N_\varepsilon \geq \mathcal{T}^N_\alpha $ for $ \alpha \in (0,1) $ and $ N $ large enough, we also obtain
	\begin{align} \label{T_esp_ext}
	\liminf_{N \to \infty} \frac{T^N_\varepsilon}{\log(N)} = + \infty,
	\end{align}
	almost surely on the same event.
	
	We now prove the upper bound on $ \mathcal{T}^N_\alpha $ on the event $ \lbrace W_0 > 0 \rbrace $.
	By Lemma~\ref{lemma:coupling}, for any $ \delta \in (0,1-\alpha) $ and $ \varepsilon \in (0,1/2) $, for $ N $ large enough,
	\begin{align*}
	A^N\left( \frac{\alpha + \delta}{\rho} \log(N) \wedge T^N_\varepsilon \right) \geq A_{2\varepsilon} \left( \frac{\alpha + \delta}{\rho} \log(N) \wedge T^N_\varepsilon \right).
	\end{align*}
	By \eqref{lower_bound_T_eps}, $ T^N_\varepsilon \geq \frac{\alpha + \delta}{\rho} \log(N) $ for all $ N $ large enough (choosing a different $ \delta $ in \eqref{lower_bound_T_eps} if needed) and, by Lemma~\ref{lemma:exp_branching},
	\begin{align*}
	A_{2\varepsilon}\left( \frac{\alpha+\delta}{\rho} \log(N) \right) \geq \frac{W_{2\varepsilon}}{2} N^{\frac{\rho_{2\varepsilon}}{\rho}(\alpha + \delta)},
	\end{align*}
	almost surely for $ N $ large enough.
	By Lemma~\ref{lemma:rho_eps}, we can choose $ \varepsilon $ small enough that
	\begin{align*}
	\frac{\rho_{2\varepsilon}}{\rho} (\alpha + \delta) > \alpha.
	\end{align*}
	As a result,
	\begin{align} \label{T_alpha_W_eps}
	\P \left( \left\lbrace \limsup_{N \to \infty} \frac{\mathcal{T}^N_\alpha}{\log(N)} > \frac{\alpha+\delta}{\rho} \right\rbrace \cap \lbrace W_0 > 0 \rbrace \right) \leq \P \left( \lbrace W_{2\varepsilon} = 0 \rbrace \cap \lbrace W_0 > 0 \rbrace \right).
	\end{align}
	Since, by construction, $ A_{2\varepsilon}(t) \leq A_0(t) $,
	\begin{align*}
	\P \left( \lbrace W_{2\varepsilon} = 0 \rbrace \cap \lbrace W_0 > 0 \rbrace \right) = \P (W_0 > 0) - \P(W_{2\varepsilon} > 0).
	\end{align*}
	The right hand side can then be made arbitrarily small by choosing $ \varepsilon $ small enough by Lemma~\ref{lemma:continuity_W_eps}.
	Since the left hand side in \eqref{T_alpha_W_eps} does not depend on $ \varepsilon $, we conclude that
	\begin{align} \label{bound_sup_T_alpha_surival}
	\limsup_{N \to \infty} \frac{\mathcal{T}^N_\alpha}{\log(N)} \leq \frac{\alpha + \delta}{\rho},
	\end{align}
	almost surely on $ \lbrace W_0 > 0 \rbrace $.
	Combining \eqref{bound_inf_T_alpha}, \eqref{bound_inf_T_alpha_ext} and \eqref{bound_sup_T_alpha_surival}, we obtain that, for any $ \alpha \in (0,1) $, almost surely,
	\begin{align*}
	\frac{\mathcal{T}^N_\alpha}{\log(N)} \to \begin{cases}
	\frac{\alpha}{\rho} & \text{ if } W_0 > 0 \\
	+\infty & \text{ otherwise.}
	\end{cases}
	\end{align*}
	This convergence thus holds in distribution for the original model defined in Subsection~\ref{subsec:model}.
	
	We now prove the upper bound on $ T^N_\varepsilon $ on the event $ \lbrace W_0 > 0 \rbrace $ for $ \varepsilon < 1-\frac{1}{R_0} $.
	To do this, we define, for $ \delta \in (0,1) $, $ \varepsilon' \in (\varepsilon, 1-\frac{1}{R_0}) $ and $ \eta \in (0,1) $,
	\begin{align*}
	&\mathfrak{I}^N_-(t) := \sum_{k=1}^{I(0)} \lambda^0_k(t) + \int_{0}^{t} \int_{0}^{\infty} \int_D \lambda(t-s) {\bf 1}_{u \leq q^N(s) \mathfrak{I}^N_-(s^-)} Q(ds, du, d\lambda),\\
	&A^N_-(t) := \int_{0}^{t} \int_{0}^{\infty} \int_D {\bf 1}_{u \leq q^N(s) \mathfrak{I}^N_-(s^-)} Q(ds, du, d\lambda),
	\end{align*}
	where
	\begin{align*}
	q^N(t) = \begin{cases}
	1-\eta & \text{ if } 0 \leq t \leq \frac{1-\delta}{\rho} \log(N) \\
	1-\varepsilon' & \text{ otherwise.}
	\end{cases}
	\end{align*}
	We note that, for $ t \leq \frac{1-\delta}{\rho} \log(N) $, $ (\mathfrak{I}^N_-(t), A^N_-(t)) = (\mathfrak{I}_\eta(t), A_\eta(t)) $ and, by a similar argument as in Lemma~\ref{lemma:coupling}, for all $ N $ large enough, using \eqref{lower_bound_T_eps},
	\begin{align} \label{coupling_A_minus}
	\forall t \leq T^N_\varepsilon, \quad \mathfrak I^N(t) \geq \mathfrak{I}^N_-(t), \quad A^N(t) \geq A^N_-(t).
	\end{align}
	In addition, for any $ \delta' > 0 $,
	\begin{align*}
	A^N_-\left( \frac{1+\delta'}{\rho} \log(N) \right) = A_\eta\left( \frac{1-\delta}{\rho} \log(N) \right) \frac{A^N_-\left( \frac{1-\delta}{\rho} \log(N) + \frac{\delta+\delta'}{\rho} \log(N) \right)}{A_\eta\left( \frac{1-\delta}{\rho} \log(N) \right)}.
	\end{align*}
	By Lemma~\ref{lemma:exp_branching}, for all $ N $ large enough
	\begin{align} \label{lower_bound_A_eta}
	A_\eta\left( \frac{1-\delta}{\rho} \log(N) \right) \geq \frac{W_\eta}{2} N^{\frac{\rho_\eta}{\rho}(1-\delta)}.
	\end{align}
	Next we note that we can write, for $ t \geq 0 $,
	\begin{align*}
	A^N_-\left( \frac{1-\delta}{\rho}\log(N) + t \right) = \sum_{i=1}^{A_\eta\left( \frac{1-\delta}{\rho} \log(N) \right)} \tilde{A}_i(t),
	\end{align*}
	where $ (\tilde{A}_i(t), t \geq 0)_{i\ge1} $ is a family of i.i.d. branching processes of the form
	\begin{align*}
		&\tilde{A}_i(t) = \int_{0}^{t} \int_{0}^{\infty} \int_D {\bf 1}_{u \leq \mathfrak (1-\varepsilon') \tilde{I}_i(s^-)} \tilde{Q}_i(ds, du, d\lambda), \\
		&\tilde{\mathfrak I}_i(t) = \tilde{\lambda}^0_i(t) + \int_{0}^{t} \int_{0}^{\infty} \int_D \lambda(t-s) {\bf 1}_{u \leq (1-\varepsilon') \tilde{\mathfrak I}(s^-)} \tilde{Q}_i(ds, du, d\lambda),
	\end{align*}
where $\{Q, \tilde{Q}_1, \tilde{Q}_2,\ldots\}$ are i.i.d., and $Q$ is the PRM which was used in the definition of the branching process $A_\eta$ up to time $\frac{1-\delta}{\rho} \log(N)$.
	Since $ \varepsilon' < 1-\frac{1}{R_0} $, $ \tilde{A}_i $ is supercritical and has growth rate $ \rho_{\varepsilon'} > 0 $.
	Moreover, by Lemma~\ref{lemma:exp_branching}, $ e^{-\rho_{\varepsilon'} t} \tilde{A}_i(t) \to \tilde{W}_i $ as $ t \to \infty $, where the $ \tilde{W}_i $ are i.i.d. and such that $ \P(\tilde{W}_i > 0) > 0 $.
	As a result, on $ \lbrace W_{\eta} > 0 \rbrace $, from \eqref{lower_bound_A_eta},
	\begin{align*}
		A_\eta\left( \frac{1-\delta}{\rho} \log(N) \right) \to \infty
	\end{align*}
	and, by the law of large numbers, as $ N \to \infty $,
	\begin{align*}
	\frac{A^N_-\left( \frac{1-\delta}{\rho} \log(N) + \frac{\delta+\delta'}{\rho} \log(N) \right)}{A_\eta\left( \frac{1-\delta}{\rho} \log(N) \right)} N^{-\frac{\rho_{\varepsilon'}}{\rho}(\delta + \delta')} \to \E [ \tilde{W}_1 ] > 0.
	\end{align*}
	Hence on the event $ \lbrace W_\eta > 0 \rbrace $, for some constant $ C > 0 $ and for $ N $ large enough,
	\begin{align*}
	A^N_-\left( \frac{1+\delta'}{\rho} \log(N) \right) \geq \frac{C\, W_\eta}{4} N^{\frac{\rho_\eta}{\rho}(1-\delta) + \frac{\rho_{\varepsilon'}}{\rho}(\delta + \delta')}.
	\end{align*}
	But by Lemma~\ref{lemma:rho_eps}, for any $ \delta'> 0 $ and $ \varepsilon' < 1-\frac{1}{R_0} $ (which ensures that $ \rho_{\varepsilon'} > 0 $), we can choose $ \eta $ and $ \delta $ small enough that
	\begin{align*}
	\frac{\rho_\eta}{\rho}(1-\delta) + \frac{\rho_{\varepsilon'}}{\rho}(\delta + \delta') > 1.
	\end{align*}
	For such a choice of $ \eta $ and $ \delta $,
	\begin{align*}
	A^N_-\left( \frac{1+\delta'}{\rho} \log(N) \right) > N
	\end{align*}
	for all $ N $ large enough, almost surely on the event $ \lbrace W_\eta > 0 \rbrace $.
	By \eqref{coupling_A_minus}, this implies
	\begin{align*}
	\P \left( \left\lbrace \limsup_{N \to \infty} \frac{T^N_\varepsilon}{\log(N)} > \frac{1+\delta'}{\rho} \right\rbrace \cap \lbrace W_0 > 0 \rbrace \right) \leq \P(W_0 > 0) - \P(W_\eta > 0),
	\end{align*}
	for all $ \eta > 0 $ small enough.
	Letting $ \eta \to 0 $ and using Lemma~\ref{lemma:continuity_W_eps}, we thus obtain
	\begin{align*}
	\limsup_{N \to \infty} \frac{T^N_\varepsilon}{\log(N)} \leq \frac{1+\delta'}{\rho},
	\end{align*}
	almost surely on $ \lbrace W_0 > 0 \rbrace $, for any $ \delta' > 0 $.
	Combining this with \eqref{lower_bound_T_eps} and \eqref{T_esp_ext} yields the result.
\end{proof}

Let us now prove Theorem~\ref{thm:early_phase_deterministic}. 

\begin{proof}[Proof of Theorem~\ref{thm:early_phase_deterministic}]
	Plugging \eqref{det_solution_exponential} into \eqref{eqlin0}, and replacing $ \overline{\lambda}^0 $ and $ F_0 $ by $ \overline{\lambda}_\rho $ and $ F_\rho $, we obtain
	\begin{align*}
		I(0) \overline{\lambda}^0(t) + \int_{0}^{t} \overline{\lambda}(t-s) \mathfrak I(s) ds &= \int_{0}^{\infty} \overline{\lambda}(t+s) \rho e^{-\rho s} ds + \int_{0}^{t} \overline{\lambda}(t-s) \rho e^{\rho s} ds \,,\\
		I(0) F^c_0(t) + \int_{0}^{t} F^c(t-s) \mathfrak I(s) ds &= \int_{0}^{\infty} F^c(t+s) \rho e^{-\rho s} ds + \int_{0}^{t} F^c(t-s) \rho e^{\rho s} ds\,.
	\end{align*}
	Changing variables in each integral and then summing them together, we obtain
	\begin{align*}
		\int_{0}^{\infty} \overline{\lambda}(t+s) \rho e^{-\rho s} ds + \int_{0}^{t} \overline{\lambda}(t-s) \rho e^{\rho s} ds &= \int_{t}^{\infty} \overline{\lambda}(s) \rho e^{\rho(t-s)} ds + \int_{0}^{t} \overline{\lambda}(s) \rho e^{\rho (t-s)} ds \\
		&= \rho e^{\rho t},
	\end{align*}
	where we have used \eqref{def_rho} in the last line.
	The same calculation with $ F^c $ instead of $ \overline{\lambda} $ yields
	\begin{align*}
		\int_{0}^{\infty} F^c(t+s) \rho e^{-\rho s} ds + \int_{0}^{t} F^c(t-s) \rho e^{\rho s} ds = \int_{0}^{\infty} F^c(s) \rho e^{\rho (t-s)} ds = \bm{i} e^{\rho t},
	\end{align*}
	using the definition of $ \bm{i} $ in \eqref{def_i_r}.
	In the case $ \rho < 0$, these calculations are unchanged, and we simply multiply each line by $ -1 $.
	Finally, the equation for $ R(t) $ follows from the fact that
	\begin{align*}
		I(t) + R(t) &= I(0) + R(0) + \int_{0}^{t} \mathfrak I(s) ds \\
		&= R(0) + I(0) + \int_{0}^{t} |\rho| e^{\rho s} ds.
	\end{align*}
	Subtracting $ I(t) = |\bm{i}| e^{\rho t} $, we obtain
	\begin{align*}
		R(t) = R(0) + \text{sign}(\rho)(1-\bm{i}) (e^{\rho t}-1).
	\end{align*}
	Since $ \bm{r} = 1-\bm{i} $, this concludes the proof (we choose $ R(0) = \bm{r} $ in the case $ \rho > 0 $).
\end{proof}

\section{Proof of the FLLN}\label{sec:proofFLLN}

In this section, for a sequence $\{X^N,N\ge1\}$ of random elements of $D$, and $X$ a random element of $D$, $X^N\Rightarrow X$ in $D$ means that $X^N$ converges weakly (i.e., in law) towards $X$ in $D$, that is, for 
any $\Phi\in C_b(D;\RR)$, $\E[\Phi(X^N)]\to\E[\Phi(X)]$ as $N\to\infty$.
\subsection{Convergence of $(\bar{S}^N, \bar{\mathfrak{I}}^N)$.}
For the process $A^N(t)$, we have the decomposition
\begin{align} \label{eqn-An-decomp}
A^N(t)= M_A^N(t) + \int_0^t\Upsilon^N(s)ds,
\end{align}
where
$$
M_A^N(t) = \int_0^t\int_0^\infty{\bf1}_{u\le \Upsilon^N(s^-)}\overline{Q}(ds,du), 
$$
with $\overline{Q}(ds,du) = Q(ds,du)- dsdu$ being the compensated PRM.
It is clear that the process $\{M_A^N(t): t\ge 0\}$ is a square-integrable martingale (see, e.g., \cite[Chapter VI]{ccinlar2011probability}) with respect to the filtration 
$\{\sF^N_t: t\ge 0\}$ defined by 
$$
\sF^N_t :=\sigma\bigg\{E^N(0), I^N(0), \{\lambda^0_j(\cdot)\}_{j\ge1},\{\lambda^{0,I}_k(\cdot)\}_{k\ge1}, \{\lambda_i(\cdot)\}_{i\ge1}, \int_0^{t'}\int_0^\infty{\bf1}_{u\le \Upsilon^N(s^-)}Q(ds,du): t' \le t \bigg\}.
$$
It has a finite quadratic variation
$$
\langle M_A^N\rangle(t) =  \int_0^t\Upsilon^N(s)ds, \quad t \ge 0. 
$$
Under Assumption \ref{AS-lambda}, we have
\begin{equation} \label{eqn-int-Phi-bound}
0 \le N^{-1} \int_s^{t} \Upsilon^N(u) du \le \lambda^*(t-s),  \quad \text{w.p.\,1} \qforq 0 \le s \le t.
\end{equation}
Thus, this implies that,  in probability as $N\to\infty$,
$$
\langle \overline{M}_A^N\rangle(t) = N^{-2} \int_0^t\Upsilon^N(s)ds \to 0  \qinq D,
$$
and by Doob's inequality, 
\begin{equation} \label{eqn-barMn-conv}
\overline{M}^N_A(t) \to 0   
\end{equation}
in mean square, locally uniformly in $t$, hence in probability in $D$.
As a consequence, we obtain the following lemma. 
\begin{lemma} \label{lem-barS-tight}
Under Assumptions \ref{AS-lambda}, \ref{AS-FLLN} and \ref{indep}, 
the sequence $\{(\bar{A}^N, \bar{S}^N)\}_{N\ge1}$ is tight in $D^2$.  The limit of any converging subsequence of $\{\bar{A}^N\}$, denoted by $\bar{A}$, satisfies
\begin{equation}
\bar{A}= \lim_{N\to\infty} \bar{A}^N = \lim_{N\to\infty}  \int_0^{\cdot} \bar\Upsilon^N(u) du, 
\end{equation}
and 
\begin{equation}
0 \le \bar{A}(t) - \bar{A}(s) \le \lambda^*(t-s),  \quad \text{w.p.\,1} \qforq 0 \le s \le t.
\end{equation}
Given the limit $\bar{A}$ of a converging subsequence of $\{\bar{A}^N\}$, along the same subsequence, $\bar{S}^N\Rightarrow \bar{S} := \bar{S}(0) - \bar{A} = 1- \bar{I}(0) - \bar{A}$ in $D$ as $N\to \infty$. 
\end{lemma}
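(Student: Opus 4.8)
The plan is to exploit the additive decomposition \eqref{eqn-An-decomp}, read in its normalized form $\bar A^N = \overline{M}^N_A + \bar D^N$ with $\bar D^N(t) := N^{-1}\int_0^t \Upsilon^N(s)\,ds = \int_0^t \bar\Upsilon^N(u)\,du$, and to treat the martingale part and the drift part separately. The martingale part is already under control: \eqref{eqn-barMn-conv} gives $\overline{M}^N_A \to 0$ in probability in $D$. Hence the whole problem reduces to the tightness and the identification of subsequential limits of the drift sequence $\{\bar D^N\}$.

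For the tightness of $\{\bar D^N\}$ I would invoke the deterministic bound \eqref{eqn-int-Phi-bound}: since $\Upsilon^N \ge 0$, every realization $\bar D^N$ is nonnegative, nondecreasing, vanishes at $0$, and satisfies $0 \le \bar D^N(t) - \bar D^N(s) \le \lambda^*(t-s)$ for $0\le s\le t$. Thus, almost surely and for every $N$, the path $\bar D^N$ lies in the set of nonnegative, nondecreasing, $\lambda^*$-Lipschitz functions on $[0,T]$ vanishing at the origin; by Arzel\`a--Ascoli this set is compact in $C([0,T])$ and hence compact in $D$ for the $J_1$ topology. As the paths lie almost surely in this fixed compact set, $\{\bar D^N\}$ is tight, and therefore so is $\{\bar A^N\}$, being the sum of a tight sequence and one converging to $0$. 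Tightness of the pair $\{(\bar A^N, \bar S^N)\}$ in $D^2$ then follows because $\bar S^N = \bar S^N(0) - \bar A^N = 1 - \bar I^N(0) - \bar A^N$ is an affine image of $(\bar I^N(0), \bar A^N)$ and $\bar I^N(0)$ converges by Assumption~\ref{AS-FLLN}.

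To identify the limit, consider a subsequence along which $\bar A^{N_k} \Rightarrow \bar A$. Since $\overline{M}^{N_k}_A \to 0$, the limit coincides with that of the drift part, giving $\bar A = \lim_k \int_0^\cdot \bar\Upsilon^{N_k}(u)\,du$, which is the stated representation. The Lipschitz bound passes to the limit, so $0 \le \bar A(t)-\bar A(s) \le \lambda^*(t-s)$; in particular $\bar A$ is almost surely continuous, so the $J_1$ convergence is in fact locally uniform in $t$. Finally, because $\bar A$ is continuous and $\bar I^N(0) \to \bar I(0)$ in probability to a deterministic constant, a Slutsky/continuous-mapping argument yields $\bar S^{N_k} = 1 - \bar I^{N_k}(0) - \bar A^{N_k} \Rightarrow 1 - \bar I(0) - \bar A = \bar S$ along the same subsequence.

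The one step requiring genuine input is the tightness of $\{\bar A^N\}$, but this is essentially handed to us by \eqref{eqn-int-Phi-bound}, which converts it into a standard Arzel\`a--Ascoli compactness statement; the remaining steps are routine bookkeeping, the only subtlety being to record that the subsequential limit $\bar A$ is continuous, so that both the sum $\overline{M}^N_A + \bar D^N$ and the affine map $x \mapsto 1 - \bar I(0) - x$ are continuous at the relevant limit points in the $J_1$ topology.
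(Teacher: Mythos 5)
Your proof is correct and follows essentially the same route as the paper, which states the lemma as an immediate consequence of the decomposition \eqref{eqn-An-decomp}, the Lipschitz bound \eqref{eqn-int-Phi-bound} and the martingale convergence \eqref{eqn-barMn-conv}; you have simply written out the Arzel\`a--Ascoli/compact-containment and Slutsky steps that the paper leaves implicit.
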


Let 
\begin{align*}
 \bar{\mathfrak{I}}^N_{0,1}(t) := N^{-1} \sum_{k=1}^{I^N(0)} \lambda^{0,I}_k(t), \quad \bar{\mathfrak{I}}^N_{0,2}(t) := N^{-1} \sum_{j=1}^{E^N(0)} \lambda^0_j(t),  \quad t \ge 0. 
\end{align*}
\begin{lemma}\label{le:J0}
Under Assumptions  \ref{AS-lambda} and \ref{AS-FLLN}, as $N\to\infty$,
\begin{align} \label{eqn-barfrakI0-conv}
\big(\bar{\mathfrak{I}}^N_{0,1}, \bar{\mathfrak{I}}^N_{0,2} \big)
\to \big( \bar{\mathfrak{I}}_{0,1}, \bar{\mathfrak{I}}_{0,2} \big) \qinq D^2 \ \text{in probability,}
\end{align}
 where 
\begin{align*}
 \bar{\mathfrak{I}}_{0,1}(t):= \bar{I}(0) \bar\lambda^{0,I}(t), \quad \bar{\mathfrak{I}}_{0,2}(t):= \bar{E}(0) \bar\lambda^0(t),  \quad t\ge 0. 
\end{align*}
\end{lemma}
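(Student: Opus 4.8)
\emph{Strategy.} The two limits in \eqref{eqn-barfrakI0-conv} are obtained by the same argument, so I focus on $\bar{\mathfrak{I}}^N_{0,1}$; the claim for $\bar{\mathfrak{I}}^N_{0,2}$ follows after replacing $(I^N(0),\lambda^{0,I}_k,\bar\lambda^{0,I},\bar I(0))$ by $(E^N(0),\lambda^0_j,\bar\lambda^0,\bar E(0))$. The plan is to separate the randomness coming from the random number of initial individuals from that coming from the i.i.d.\ infectivity functions. On $\{I^N(0)\ge 1\}$ I write
$$\bar{\mathfrak{I}}^N_{0,1}=\bar I^N(0)\,\Lambda_{I^N(0)},\qquad \Lambda_n:=\frac1n\sum_{k=1}^n\lambda^{0,I}_k,$$
which reduces the lemma to (a) a functional law of large numbers for the empirical average $\Lambda_n$ taken over a \emph{deterministic} number $n$ of terms, and (b) inserting the random index $I^N(0)$ together with the scalar factor $\bar I^N(0)$.

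\emph{Functional SLLN for $\Lambda_n$.} The core step is to show that, for every $T>0$,
$$\Lambda_n\longrightarrow \bar\lambda^{0,I}\qquad\text{in }D([0,T];\R)\text{, almost surely as } n\to\infty.$$
The cleanest route is to prove the stronger statement that $\sup_{t\le T}|\Lambda_n(t)-\bar\lambda^{0,I}(t)|\to 0$ almost surely, since uniform convergence implies convergence in the Skorohod $J_1$ topology. This is a Glivenko--Cantelli phenomenon for the function class $\{\,\omega\mapsto\lambda^{0,I}(\omega;t):t\in[0,T]\,\}$, which has the integrable envelope $\lambda^*$. Finite-dimensional a.s.\ convergence is immediate from the multivariate strong law, because for fixed times the vectors $(\lambda^{0,I}_k(t_1),\dots,\lambda^{0,I}_k(t_m))$ are i.i.d.\ and bounded by $\lambda^*$. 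To upgrade this to the uniform statement I would use Assumption~\ref{AS-lambda} in an essential way: each $\lambda^{0,I}_k$ is a sum of at most $k$ continuous pieces, with a common modulus of continuity governed by $\varphi_T$, glued along its i.i.d.\ jump times; consequently there are only finitely many atoms of the jump-time law (producing the finitely many genuine jumps of $\bar\lambda^{0,I}$), and between them $t\mapsto\lambda^{0,I}(\cdot;t)$ is $L^1$--continuous. This yields finite bracketing numbers, hence a.s.\ uniform convergence. Establishing this bracketing/oscillation estimate is the main obstacle; it is precisely what fails without the piecewise-continuous structure, in which case only the weaker $L^p_{loc}$ convergence survives.

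\emph{Random index and assembly.} By Assumption~\ref{AS-FLLN}, $\bar I^N(0)\to\bar I(0)$ in probability. If $\bar I(0)>0$, then $I^N(0)\to\infty$ in probability; from an arbitrary subsequence I extract a further one along which $\bar I^N(0)\to\bar I(0)$ almost surely and the functional SLLN above holds on a full-probability event, so that $\Lambda_{I^N(0)}\to\bar\lambda^{0,I}$ in $D$ along that subsequence (evaluating the a.s.-convergent deterministic family $\Lambda_n$ at the a.s.-divergent random index $I^N(0)$). Multiplying by $\bar I^N(0)\to\bar I(0)$ and using that scalar multiplication is continuous in $(D,J_1)$ gives $\bar{\mathfrak{I}}^N_{0,1}\to\bar I(0)\bar\lambda^{0,I}=\bar{\mathfrak{I}}_{0,1}$ along the subsequence; since every subsequence admits such a further subsequence, this is convergence in probability in $D$. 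If $\bar I(0)=0$, the crude bound $0\le\bar{\mathfrak{I}}^N_{0,1}(t)\le\lambda^*\,\bar I^N(0)$ already forces $\bar{\mathfrak{I}}^N_{0,1}\to 0$ uniformly on compacts in probability. The identical argument handles $\bar{\mathfrak{I}}^N_{0,2}$, and joint convergence in $D^2$ follows because the two empirical averages are built from disjoint i.i.d.\ families of infectivity functions while $(\bar E^N(0),\bar I^N(0))\to(\bar E(0),\bar I(0))$ jointly.
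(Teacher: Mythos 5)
Your argument is correct in outline, but it takes a genuinely different route from the paper on both of its two steps, and it is worth comparing them. For the random initial size, the paper does not write $\bar{\mathfrak{I}}^N_{0,1}=\bar I^N(0)\,\Lambda_{I^N(0)}$ and compose an SLLN with a random index; instead it introduces the auxiliary sum $\widetilde{\mathfrak{I}}^N_{0,1}(t):=N^{-1}\sum_{k=1}^{N\bar I(0)}\lambda^{0,I}_k(t)$ over a \emph{deterministic} number of terms, proves convergence for that, and then bounds the discrepancy by $\sup_{t\le T}|\widetilde{\mathfrak{I}}^N_{0,1}(t)-\bar{\mathfrak{I}}^N_{0,1}(t)|\le\lambda^*|\bar I^N(0)-\bar I(0)|\to 0$ in probability. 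This one-line estimate replaces your subsequence extraction, the composition of an a.s.\ limit with the divergent random index $I^N(0)$, and the separate case $\bar I(0)=0$; your version works, but the paper's truncation is shorter and needs no case distinction. For the functional law of large numbers itself, the paper simply invokes a known SLLN for i.i.d.\ random elements of $D$ with integrable sup-norm (Theorem 1 of Rao (1963), or Corollary 7.10 of Ledoux--Talagrand), whereas you propose to re-derive it via a bracketing/Glivenko--Cantelli argument exploiting the piecewise-continuity structure of Assumption~\ref{AS-lambda}. Your sketch of that step is plausible (partition $[0,T]$ so that each open cell carries small mass of the jump-time law, control the oscillation inside cells by $\varphi_T$ plus the event that a jump time falls in the cell), and it is the main technical content of your write-up; the price is that you leave the bracketing estimate as a sketch and that you attribute to Assumption~\ref{AS-lambda} a role it does not play in the paper's proof of this particular lemma, which uses only the uniform bound $\lambda^{0,I}\le\lambda^*$ together with the cited general $D$-valued SLLN (the piecewise structure is really needed later, for $\bar{\mathfrak{I}}^N_1$, where the summands are evaluated at the random times $t-\tau^N_i$). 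In short: your proof is a self-contained but longer variant; the paper's is a two-line reduction to an off-the-shelf theorem.
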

\begin{proof}
Define the processes 
\begin{align}\label{eqn-tilde-frakI-0}
 \widetilde{\mathfrak{I}}^N_{0,1}(t) := N^{-1} \sum_{k=1}^{N \bar{I}(0)} \lambda^{0,I}_k(t), \quad \widetilde{\mathfrak{I}}^N_{0,2}(t) := N^{-1} \sum_{j=1}^{N \bar{E}(0)} \lambda^0_j(t),  \quad t \ge 0. 
\end{align}
By the i.i.d. assumptions for the sequences  $\{\lambda^0_j(t)\}$ and 
 $\{\lambda^{0,I}_k(t)\}$, and their independence,  and by the LLN for random elements in $D$ (see Theorem 1 in \cite{rao1963law} or Corollary 7.10 in \cite{ledoux2013probability}), 
 we directly obtain that, as $N\to\infty$,
 \begin{align*}
\big(\widetilde{\mathfrak{I}}^N_{0,1}, \widetilde{\mathfrak{I}}^N_{0,2} \big)
\to \big( \bar{\mathfrak{I}}_{0,1}, \bar{\mathfrak{I}}_{0,2} \big) \qinq D^2 \ \text{ in probability.} 
\end{align*}

It then suffices to show that, as $N \to \infty$,
 \begin{align} \label{eqn-barfrak-0-diff-conv}
\big(\widetilde{\mathfrak{I}}^N_{0,1}-\bar{\mathfrak{I}}^N_{0,1}, \widetilde{\mathfrak{I}}^N_{0,2} - \bar{\mathfrak{I}}^N_{0,2} \big)
\to 0 \qinq D^2 \ \text{  in probability.} 
\end{align}
We have 
\begin{align} \label{eqn-barfrak-I-0-diff}
\widetilde{\mathfrak{I}}^N_{0,1}(t)-\bar{\mathfrak{I}}^N_{0,1}(t) = \text{sign}(\bar{I}(0) - \bar{I}^N(0)) N^{-1} \sum_{k=N (\bar{I}^N(0) \wedge \bar{I}(0) )}^{N (\bar{I}^N(0) \vee \bar{I}(0) )} \lambda^{0,I}_k(t),
\end{align}
and thus 
\begin{align*}
\sup_{0\le t\le T} \big|\widetilde{\mathfrak{I}}^N_{0,1}(t)-\bar{\mathfrak{I}}^N_{0,1}(t) \big| \le  \lambda^{\ast}  \big|\bar{I}^N(0) -\bar{I}(0)\big|. 
\end{align*}
By the convergence $\bar{I}^N(0) -\bar{I}(0) \to 0$ in probability under Assumption \ref{AS-FLLN}, we obtain that $\widetilde{\mathfrak{I}}^N_{0,1}-\bar{\mathfrak{I}}^N_{0,1}\to 0$ in $D$ in probability. A similar argument yields the convergence $\widetilde{\mathfrak{I}}^N_{0,2}-\bar{\mathfrak{I}}^N_{0,2}\to 0$ in $D$ in  probability. This completes the proof. 
\end{proof}

Let 
\begin{align*}
\bar{\mathfrak{I}}^N_1(t) := N^{-1}\sum_{i=1}^{A^N(t)}\lambda_i (t-\tau^N_i), \quad t\ge 0. 
\end{align*}

Before we prove the convergence of $\bar{\mathfrak{I}}^N_1$ in $D$, let us first establish three technical results which will be useful in the next proof.
 The first of those results was implicitly used in  \cite{PP-2020}.
\begin{lemma}\label{Lem-20}
Let $\{X^N\}_{N\ge1}$ be a sequence of random elements in $D$. If the two conditions
\begin{enumerate}
\item for all $\ep>0$, $0\le t\le T$, $\P\big(|X^N(t)|>\ep\big)\to0$, as $N\to\infty$, and
\item for all $\ep>0$, $\limsup_{N}\sup_{0\le t\le T}\frac{1}{\delta}\P\big(\sup_{0\le u\le \delta}|X^N(t+u)-X^N(t)|>\ep\big)\to0$, as $\delta\to0$
\end{enumerate}
 are satisfied  for all $T>0$, then $X^N(t)\to0$ in probability locally uniformly in $t$.
 \end{lemma}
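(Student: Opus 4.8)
The plan is to run a standard grid (chaining) argument and to observe that the $1/\delta$ normalisation built into hypothesis (ii) is precisely what is needed to offset the $\sim T/\delta$ mesh intervals. Fix $T>0$ and $\epsilon>0$. For each $\delta>0$ I would discretise $[0,T]$ along $t_k=k\delta$, $k=0,\dots,m$ with $m=\lceil T/\delta\rceil$ (so $m\le T/\delta+1$), and use the elementary bound, valid for any function,
\[
\sup_{0\le t\le T}|X^N(t)|\ \le\ \max_{0\le k\le m}|X^N(t_k)|\ +\ \max_{0\le k\le m-1}\ \sup_{0\le u\le\delta}|X^N(t_k+u)-X^N(t_k)|,
\]
which follows by writing, for $t\in[t_k,t_{k+1}]$, $|X^N(t)|\le|X^N(t_k)|+|X^N(t)-X^N(t_k)|$ and taking suprema (the right-continuity of $X^N$ guarantees that all these suprema are measurable). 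A union bound then gives
\[
\P\Big(\sup_{0\le t\le T}|X^N(t)|>2\epsilon\Big)\ \le\ \sum_{k=0}^{m}\P\big(|X^N(t_k)|>\epsilon\big)\ +\ \sum_{k=0}^{m-1}\P\Big(\sup_{0\le u\le\delta}|X^N(t_k+u)-X^N(t_k)|>\epsilon\Big).
\]

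Next I would send $N\to\infty$ with $\delta$ held fixed. The first sum has only $m+1$ terms, each tending to $0$ by hypothesis (i), so its $\limsup_N$ vanishes. Each summand of the second sum is at most $\sup_{0\le t\le T}\P(\sup_{0\le u\le\delta}|X^N(t+u)-X^N(t)|>\epsilon)$, and there are at most $m\le T/\delta+1$ of them; writing $g(\delta):=\limsup_{N}\sup_{0\le t\le T}\tfrac1\delta\,\P(\sup_{0\le u\le\delta}|X^N(t+u)-X^N(t)|>\epsilon)$, I obtain
\[
\limsup_{N\to\infty}\P\Big(\sup_{0\le t\le T}|X^N(t)|>2\epsilon\Big)\ \le\ m\,\delta\,g(\delta)\ \le\ (T+\delta)\,g(\delta).
\]
Since the left-hand side does not involve $\delta$, I would finally let $\delta\downarrow0$: hypothesis (ii) is exactly the statement $g(\delta)\to0$, so the right-hand side tends to $0$ and the $\limsup_N$ equals $0$. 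As $\epsilon>0$ and $T>0$ are arbitrary, this is the asserted convergence of $X^N$ to $0$ in probability, locally uniformly in $t$.

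The only genuinely delicate point, and the thing that requires care, is the order of the two limits together with the bookkeeping on the number of intervals. Condition (i) controls only finitely many fixed times, which is harmless for fixed $\delta$ but useless as $\delta\to0$ because the number of grid points blows up; condition (ii) must therefore absorb all $\sim T/\delta$ oscillation increments at once. This succeeds precisely because condition (ii) forces each oscillation probability to be $o(\delta)$, uniformly in the starting time, so that (number of intervals)$\,\times\,$(oscillation probability) stays bounded by $(T+\delta)\,g(\delta)$. Taking $\limsup_N$ \emph{before} letting $\delta\downarrow0$ is thus essential, and this is exactly why the hypothesis is stated with the factor $1/\delta$ placed inside the $\limsup_N$ rather than as a plain pointwise-in-$\delta$ bound.
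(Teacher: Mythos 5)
Your proof is correct and follows essentially the same route as the paper: discretise $[0,T]$ on a mesh of width $\delta$, bound the supremum by the grid values plus the oscillations, apply a union bound, take $\limsup_N$ at fixed $\delta$ using (i) for the finitely many grid points, and then let $\delta\downarrow 0$ using the $1/\delta$ normalisation in (ii) to absorb the $\sim T/\delta$ oscillation terms. No changes needed.
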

 \begin{proof}
 We partition the interval $[0,T]$ into subintervals of length $\delta$, that is, we define
 $t_i=i\delta\wedge T$,  $i=0,1,\ldots,\lfloor T/\delta\rfloor$, and obtain
 \begin{align*}
 \sup_{t\in[0,T]} |X^N(t)| \le \sup_{i=0,\ldots,\lfloor T/\delta\rfloor} |X^N(t_{i})| + 
 \sup_{i=0,\ldots,\lfloor T/\delta\rfloor}\sup_{u\in [0,\delta]} |X^N(t_{i}+u) - X^N(t_{i})|\,.
 \end{align*}
 We immediately obtain the following inequality
 \begin{align*}
 \P\left(\sup_{0\le t\le T}|X^N(t)|>\eps\right)
 & \le\sum_{i=0}^{\lfloor T/\Delta\rfloor}\P(|X^N(t_i)|>\eps/2) \\
 & \qquad+\left(\frac{T}{\delta}+1\right)\sup_{0\le t\le T}\P\left(\sup_{0\le u\le \delta}|X^N(t+u)-X^N(t)|>\ep/2\right)\,.
 \end{align*}
  From condition (i), $\limsup_N$ of the first term on the right is zero for any $\delta>0$, while by condition (ii), $\limsup_N$ of the second term tends to zero as $\delta\to0$. The result follows.
 \end{proof}
 
 In the next statement, $D_\uparrow(\R_+)$ (resp. $C_\uparrow(\R_+)$) 
 denotes the set of real-valued nondecreasing
 function on $\R_+$, which belong to $D(\R_+)$ (resp. $C(\R_+)$).
\begin{lemma}\label{le:Portmanteau}
Let $f\in D(\R_+)$ and $\{g_N\}_{N\ge1}$ be a sequence of elements of $D_\uparrow(\R_+)$ which is such that
$g_N\to g$ locally uniformly as $N\to\infty$, where $g\in C_\uparrow(\R_+)$. Then, for any $t>0$, as $N\to\infty$,
\[ \int_{[0,t]} f(s) g_N(ds)\to \int_{[0,t]} f(s)g(ds)\,. \]
\end{lemma}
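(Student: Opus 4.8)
The plan is to reduce the statement to the case of step functions, for which the Stieltjes integral against $g_N$ is a finite sum of increments of $g_N$, and then to control the approximation error uniformly in $N$. First I would record the two consequences of the hypotheses that do all the work. Since $g_N\to g$ locally uniformly with $g\in C_\uparrow(\R_+)$, we have $\sup_{0\le s\le t}|g_N(s)-g(s)|\to 0$; passing to left limits in this bound and using the continuity of $g$ gives $g_N(s^-)\to g(s^-)=g(s)$ for every $s\in[0,t]$, as well as $g_N(t)-g_N(t^-)\to 0$. In particular the limiting Lebesgue--Stieltjes measure $g(ds)$ is atomless, and the total masses $M_N:=g_N([0,t])$ converge, so that $M:=\sup_N M_N<\infty$.

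Next I would exploit the c\`adl\`ag structure of $f$. For fixed $t>0$ and $\eps>0$, choose a partition $0=t_0<t_1<\cdots<t_m=t$ such that the oscillation of $f$ on each $[t_{i-1},t_i)$ is at most $\eps$ (such a partition exists for any $f\in D(\R_+)$), and set
\[ f_\eps:=\sum_{i=1}^m f(t_{i-1}){\bf 1}_{[t_{i-1},t_i)}+f(t){\bf 1}_{\{t\}}, \]
so that $f$ is bounded on $[0,t]$ and $\sup_{0\le s\le t}|f(s)-f_\eps(s)|\le\eps$. For the step function $f_\eps$ one has
\[ \int_{[0,t]}f_\eps(s)\,g_N(ds)=\sum_{i=1}^m f(t_{i-1})\bigl(g_N(t_i^-)-g_N(t_{i-1}^-)\bigr)+f(t)\bigl(g_N(t)-g_N(t^-)\bigr), \]
and by the convergence of the left limits together with $g_N(t)-g_N(t^-)\to 0$, the right-hand side converges as $N\to\infty$ to $\sum_{i=1}^m f(t_{i-1})\bigl(g(t_i)-g(t_{i-1})\bigr)=\int_{[0,t]}f_\eps(s)\,g(ds)$ (the endpoint conventions being immaterial because $g$ is atomless).

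It then remains to estimate the errors incurred by replacing $f$ with $f_\eps$. Since $|f-f_\eps|\le\eps$ on $[0,t]$,
\[ \Bigl|\int_{[0,t]}(f-f_\eps)\,dg_N\Bigr|\le \eps\,g_N([0,t])\le \eps M, \]
and likewise $\bigl|\int_{[0,t]}(f-f_\eps)\,dg\bigr|\le \eps M$. Combining these two bounds with the convergence of $\int f_\eps\,dg_N$ through the triangle inequality yields
\[ \limsup_{N\to\infty}\Bigl|\int_{[0,t]}f\,dg_N-\int_{[0,t]}f\,dg\Bigr|\le 2\eps M, \]
and letting $\eps\downarrow 0$ finishes the proof.

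The one point requiring care --- and the only real obstacle --- is that $f$ is merely c\`adl\`ag, so one cannot simply invoke weak convergence of the measures $g_N(ds)$ against a continuous test function. The resolution is precisely the continuity of the limit $g$: it makes $g(ds)$ atomless, so the (at most countable) jumps of $f$ carry no limiting mass, and, concretely, it lets the oscillation-partition approximation of $f$ be controlled \emph{uniformly} in $N$ through the uniform bound $M$ on the total masses. Everything else is the routine bookkeeping of Lebesgue--Stieltjes integrals.
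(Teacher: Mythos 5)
Your proof is correct, but it takes a more elementary route than the paper, which disposes of the lemma in two sentences: the locally uniform convergence $g_N\to g$ with $g$ continuous gives weak convergence of the measures $g_N(ds)$ to $g(ds)$ on $[0,t]$, and since $f$ is bounded with an at most countable discontinuity set, which is $g(ds)$-null because $g(ds)$ is atomless, the conclusion follows from (a minor extension of) the Portmanteau/mapping theorem, Theorem 2.1 in \cite{billingsley1999convergence}. You instead re-prove that statement by hand: you build the oscillation partition available for any c\`adl\`ag $f$, reduce to a step function $f_\eps$ whose Stieltjes integral against $g_N$ is a finite sum of increments, pass to the limit term by term using $g_N(s^-)\to g(s^-)=g(s)$, and absorb the approximation error uniformly in $N$ via the uniform bound on the total masses $g_N([0,t])$. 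The key observation is the same in both arguments --- the continuity of $g$ neutralises the discontinuities of $f$ --- but your version is self-contained and makes explicit exactly where each hypothesis ($g_N\to g$ uniformly, $g$ continuous, $g_N$ nondecreasing, $f$ c\`adl\`ag) enters, at the cost of some bookkeeping about endpoint conventions for the Lebesgue--Stieltjes measures (measure of $\lbrace 0\rbrace$ and of $\lbrace t\rbrace$), which you handle correctly. Either approach is acceptable; the paper's is shorter because it leans on a standard reference.
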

\begin{proof}
The assumption implies that the sequence of measures $g_N(ds)$ converges weakly, as $N\to\infty$,
towards the measure $g(ds)$. Since, moreover, $f$ is bounded and the set of discontinuities of $f$ is
of $g(ds)$ measure $0$,  the convergence is essentially a minor improvement of the Portmanteau theorem, see Theorem 2.1 in \cite{billingsley1999convergence}. 
\end{proof}

\begin{lemma}\label{le:convD-C}
Let $\{X^N,\ N\ge1\}$ be a sequence of random elements in $D$, which is such that for all $k\ge1$, $0\le t_1<t_2<\cdots<t_k$, as $N\to\infty$, $(X^N(t_1),\ldots,X^N(t_k))\Rightarrow (X(t_1),\ldots,X(t_k))$, and moreover the sequence $\{X^N\}$ satisfies condition (ii) of Lemma  \ref{Lem-20}. Then $X^N\Rightarrow X$ in $D$, and moreover $X\in C$ a.s. If, in addition, for all $t\ge0$, $X^N(t)\to X(t)$ in probability, then $X^N(t)\to X(t)$ in probability locally uniformly in $t$.
\end{lemma}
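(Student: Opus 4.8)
The plan is to fix an interval $[0,T]$ (convergence in $D(\R_+;\R)$ then follows by letting $T\to\infty$ along a sequence, since the limit will turn out to be continuous) and to organize the argument into three parts: \emph{(a)} tightness of $\{X^N\}$ in $D([0,T];\R)$; \emph{(b)} identification of every subsequential limit with $X$ together with the a.s.\ continuity of its paths; and \emph{(c)} the final upgrade to locally uniform convergence in probability. Throughout I write $w_T(x,\delta):=\sup\{|x(t)-x(s)|:0\le s\le t\le T,\ t-s\le\delta\}$ for the uniform modulus of continuity.

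The engine of the proof is the passage from the pointwise-in-$t$ increment bound of condition (ii) of Lemma~\ref{Lem-20} to a bound on $w_T(X^N,\delta)$. I would partition $[0,T]$ by $t_i=i\delta$, $0\le i\le m$ with $m\le T/\delta+1$, and use the elementary deterministic inequality $w_T(x,\delta)\le 3\max_{0\le i\le m}\sup_{0\le u\le\delta}|x(t_i+u)-x(t_i)|$ (any pair $s\le t$ with $t-s\le\delta$ lies in two consecutive grid cells). A union bound then gives
\begin{align*}
\P\big(w_T(X^N,\delta)>\ep\big)\le (m+1)\sup_{0\le t\le T}\P\Big(\sup_{0\le u\le\delta}|X^N(t+u)-X^N(t)|>\ep/3\Big).
\end{align*}
Since $m+1\le T/\delta+2$, the right-hand side equals $(T+2\delta)$ times $\tfrac1\delta\sup_t\P(\cdots)$, which is exactly the quantity condition (ii) forces to $0$ (first $\limsup_N$, then $\delta\to0$). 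Hence $\lim_{\delta\to0}\limsup_N\P(w_T(X^N,\delta)>\ep)=0$ for every $\ep>0$. This is the crucial step, and it is where I expect the main obstacle to lie: the $1/\delta$ normalization built into condition (ii) is precisely what compensates for the $O(1/\delta)$ cells in the union bound, so that one controls the \emph{global} modulus rather than only individual increments.

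Given this modulus control, tightness follows from the standard criterion (Theorem~13.2 in \cite{billingsley1999convergence}): the containment condition holds because $\{X^N(0)\}$ is tight (as $X^N(0)\Rightarrow X(0)$) and the modulus bounds $\sup_{[0,T]}|X^N|$, while the $J_1$-modulus is controlled through $w'(x,\delta)\le w_T(x,2\delta)$. By Prohorov's theorem $\{X^N\}$ is relatively compact; let $Y$ be any subsequential weak limit. Since $\sup_{0\le t\le T}|X^N(t)-X^N(t^-)|\le w_T(X^N,\delta)$ for every $\delta$, the maximal jump of $X^N$ tends to $0$ in probability, so the sequence is $C$-tight and $Y\in C$ a.s. As all points are then a.s.\ continuity points of $Y$, the evaluation maps are a.s.\ continuous, so the finite-dimensional hypothesis yields $(Y(t_1),\dots,Y(t_k))\deq(X(t_1),\dots,X(t_k))$ for all $t_1<\cdots<t_k$; these determine the law of the continuous process $Y$, whence $Y\deq X$. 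Uniqueness of the limit gives $X^N\Rightarrow X$ in $D$, with $X\in C$ a.s.

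For the last assertion I would \emph{not} appeal to the Skorohod representation: weak convergence only constrains the marginal law of $X^N$ and says nothing about the joint behaviour of $X^N$ and this specific limiting $X$, so it cannot by itself produce a convergence-in-probability statement. It is precisely the extra hypothesis $X^N(t)\to X(t)$ in probability that ties $X^N$ to this $X$. Instead I would re-run the proof of Lemma~\ref{Lem-20} with $X$ subtracted: for $t\in[t_i,t_{i+1}]$,
\begin{align*}
|X^N(t)-X(t)|\le |X^N(t_i)-X(t_i)|+w_T(X^N,\delta)+w_T(X,\delta),
\end{align*}
hence
\begin{align*}
\P\Big(\sup_{[0,T]}|X^N-X|>3\ep\Big)\le\sum_{i=0}^m\P\big(|X^N(t_i)-X(t_i)|>\ep\big)+\P\big(w_T(X^N,\delta)>\ep\big)+\P\big(w_T(X,\delta)>\ep\big).
\end{align*}
For fixed $\delta$ the finite sum vanishes as $N\to\infty$ by the pointwise-in-probability hypothesis; taking $\limsup_N$ and then $\delta\to0$ annihilates the second term by the modulus bound of the second paragraph and the third term because $X\in C$ forces $w_T(X,\delta)\to0$ a.s. Thus $\sup_{[0,T]}|X^N-X|\to0$ in probability for every $T$, which is the claimed locally uniform convergence in probability.
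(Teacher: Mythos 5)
Your proposal is correct and follows essentially the same route as the paper: the modulus bound $\P(\omega_{X^N}(T,\delta)>3\ep)\le(\tfrac{T}{\delta}+1)\sup_t\P(\sup_{u\le\delta}|X^N(t+u)-X^N(t)|>\ep)$, tightness via Theorem 13.2 of \cite{billingsley1999convergence} and $\omega'\le\omega(2\delta)$, identification through the finite-dimensional distributions with $X\in C$, and a grid argument for the locally uniform convergence in probability. Your last step merely writes out explicitly what the paper compresses into ``apply Lemma \ref{Lem-20} to $X^N-X$'', with the (sensible) refinement that the term $\P(w_T(X,\delta)>\ep)$ is kept as a single probability rather than multiplied by $T/\delta$.
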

\begin{proof}
Define the modulus of continuity on $[0,T]$ of a function $x$ as 
\[\omega_x(T,\delta)=\sup_{0\le s<t\le T,\ t-s\le\delta}|x(t)-x(s)|\,.\]
It is clear (see the proof of Theorem 7.4 in \cite{billingsley1999convergence}) that 
\[\P(\omega_{X^N}(T,\delta)>3\ep)\le\sup_{0\le t\le T}\left(\frac{T}{\delta}+1\right)\P\bigg(\sup_{0\le u\le \delta}|X^N(t+u)-X^N(t)|>\ep\bigg)\]
Since the ``$D$--modulus of continuity'' $\omega'_x(T,\delta)$ satisfies  $\omega'_x(T,\delta)\le\omega_x(T,2\delta)$ (see (12.7) in \cite{billingsley1999convergence}), we conclude from Theorem 13.2 and its Corollary in \cite{billingsley1999convergence} that $\{X^N\}$ is tight in $D$. Since all finite dimensional distributions of $X^N$ converge to those of $X$, all converging subsequences of the sequence $\{X^N\}$ converge to $X$, and the whole sequence converges to $X$. Moreover, it follows from our assumptions that for any $T>0$, $\omega_X(T,\delta)\to0$, as $\delta\to0$, hence $X\in C$ a.s. Concerning the convergence in probability, we note that under the additional assumption, $Y^N(t):=X^N(t)-X(t)$ satisfies the conditions of Lemma \ref{Lem-20}, hence the result.
\end{proof}

\begin{lemma} \label{lem-barfrakI}
Under Assumptions \ref{AS-lambda} and \ref{indep}, if $\bar{A}$ is the limit of a converging subsequence of $\{\bar{A}^N\}$, then along the same subsequence, 
\begin{align} \label{eqn-barfrakI-conv}
\bar{\mathfrak{I}}^N_1 
\Rightarrow \bar{\mathfrak{I}}_1 \qinq D \qasq N \to \infty,
\end{align} 
where 
\begin{align*}
\bar{\mathfrak{I}}_1(t):= \int_0^t \bar\lambda (t-s) d \bar{A}(s), \quad t\ge 0. 
\end{align*}

\end{lemma}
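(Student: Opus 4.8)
The plan is to realize $\bar{\mathfrak{I}}^N_1$ as an integral against a Poisson random measure and to isolate its predictable part, which will carry the limit. Enlarging the driving noise to a PRM $Q$ on $\R_+^2\times D$ with intensity $ds\,du\,P(d\lambda)$, where $P$ is the law of $\lambda(\cdot)$ (this does not alter the law of the model, exactly as in Lemma~\ref{lemma:coupling}), one may write
\begin{align*}
\bar{\mathfrak{I}}^N_1(t)=N^{-1}\int_0^t\!\int_0^\infty\!\int_D\lambda(t-s){\bf1}_{u\le\Upsilon^N(s^-)}Q(ds,du,d\lambda)=C^N(t)+M^N(t),
\end{align*}
where $M^N$ is the integral against the compensated measure $\overline{Q}$ and, since $\int_D\lambda(t-s)P(d\lambda)=\bar\lambda(t-s)$, the compensator is $C^N(t)=\int_0^t\bar\lambda(t-s)\bar\Upsilon^N(s)\,ds$.

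I would first establish convergence of the one-dimensional (hence all finite-dimensional) distributions. For the fluctuation term, fix $t$ and note that $r\mapsto N^{-1}\int_0^r\!\int_0^\infty\!\int_D\lambda(t-s){\bf1}_{u\le\Upsilon^N(s^-)}\overline{Q}(ds,du,d\lambda)$ is a square-integrable martingale with predictable quadratic variation $N^{-2}\int_0^r\overline{\lambda^2}(t-s)\Upsilon^N(s)\,ds$, where $\overline{\lambda^2}(w):=\E[\lambda(w)^2]\le(\lambda^*)^2$. By the pathwise bound \eqref{eqn-int-Phi-bound}, its value at $r=t$ has expectation at most $N^{-1}(\lambda^*)^3t$, so $M^N(t)\to0$ in $L^2$, hence in probability, for each $t$. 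For the compensator, set $g_N(r):=\int_0^r\bar\Upsilon^N(s)\,ds=\bar A^N(r)-\overline{M}^N_A(r)$; by Lemma~\ref{lem-barS-tight} and \eqref{eqn-barMn-conv}, $g_N\to\bar A$ locally uniformly in probability along the chosen subsequence, with $\bar A\in C_\uparrow$ Lipschitz. Since $d\bar A$ has no atoms, Lemma~\ref{le:Portmanteau} (whose proof only needs the discontinuity set of the integrand to be $d\bar A$-null, a property satisfied by the time-reversed, countably-discontinuous map $s\mapsto\bar\lambda(t-s)$) gives $C^N(t)=\int_{[0,t]}\bar\lambda(t-s)\,g_N(ds)\to\int_{[0,t]}\bar\lambda(t-s)\,d\bar A(s)=\bar{\mathfrak{I}}_1(t)$ in probability. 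Therefore $\bar{\mathfrak{I}}^N_1(t)\to\bar{\mathfrak{I}}_1(t)$ in probability for each $t$, so the finite-dimensional distributions converge to those of $\bar{\mathfrak{I}}_1$.

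It then remains to verify condition (ii) of Lemma~\ref{Lem-20}, after which Lemma~\ref{le:convD-C} upgrades the above to $\bar{\mathfrak{I}}^N_1\Rightarrow\bar{\mathfrak{I}}_1$ in $D$ and simultaneously yields $\bar{\mathfrak{I}}_1\in C$. I would treat $C^N$ and $M^N$ separately. Writing the increment of $C^N$ as $\int_t^{t+v}\bar\lambda(t+v-s)\bar\Upsilon^N(s)\,ds+\int_0^t[\bar\lambda(t+v-s)-\bar\lambda(t-s)]\bar\Upsilon^N(s)\,ds$ and using $\bar\lambda,\bar\Upsilon^N\le\lambda^*$ gives the \emph{deterministic} bound $\sup_{0\le v\le\delta}|C^N(t+v)-C^N(t)|\le(\lambda^*)^2\delta+\lambda^*\sup_{0\le v\le\delta}\int_0^T|\bar\lambda(r+v)-\bar\lambda(r)|\,dr$, uniformly in $t\le T$ and $N$; as $\bar\lambda$ is bounded and c\`adl\`ag this tends to $0$ with $\delta$ by $L^1$-continuity of translation, so the corresponding probability in condition (ii) vanishes identically once $\delta$ is small. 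For $M^N$ it suffices to show that, for each fixed $\delta$, $\sup_{t\le T}\E\big[\sup_{0\le v\le\delta}|M^N(t+v)-M^N(t)|^2\big]\to0$ as $N\to\infty$; Markov's inequality then makes $\limsup_N\sup_{t\le T}\frac1\delta\P(\cdots>\ep)$ equal to $0$ for every $\delta$, which is even stronger than condition (ii) requires.

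The main obstacle is this last oscillation bound for $M^N$, precisely because $M^N$ is \emph{not} a martingale in $t$: the integrand $\lambda(t-s)$ depends on the evaluation time, which is exactly the feature that prevents the integration-by-parts used in \cite{PP-2020}. I would split $M^N(t+v)-M^N(t)$ into a stochastic-integral increment over the fresh window $(t,t+v]$ and a term carrying the moving argument, namely $N^{-1}\int_0^t\!\int_0^\infty\!\int_D[\lambda(t+v-s)-\lambda(t-s)]{\bf1}_{u\le\Upsilon^N(s^-)}\overline{Q}(ds,du,d\lambda)$. The delicate point is the latter: by Assumption~\ref{AS-lambda}, on each continuous piece the bracket is at most $\varphi_T(v)$, whereas crossing one of the at most $k$ jumps of $\lambda$ contributes up to $2\lambda^*$. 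To see that few individuals cross a jump in a short window I would condition on the mark $\lambda$ and use that, for the $j$-th jump time $\xi^j$ with law $G_{\xi^j}$, one has $G_{\xi^j}(t+v-s)-G_{\xi^j}(t-s)=\int{\bf1}_{t-w<s\le t+v-w}\,G_{\xi^j}(dw)$, so the expected number of crossings is governed by increments of $A^N$ over shifted windows of length $v$, which are of order $Nv$. Combining these estimates within the PRM second-moment calculus, and using that the purely stochastic contributions inherit the prefactor $N^{-1}$ from the $N^{-2}$ in the quadratic variation, should deliver the required vanishing of the second moment and close the proof.
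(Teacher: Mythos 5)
Your overall architecture is sound and the route is a genuine variant of the paper's. The paper writes $\bar{\mathfrak{I}}^N_1=\breve{\mathfrak{I}}^N_1+V^N$ with $\breve{\mathfrak{I}}^N_1(t)=\int_0^t\bar\lambda(t-s)\,d\bar A^N(s)$, i.e.\ it first replaces each $\lambda_i$ by its mean while keeping the full random measure $d\bar A^N$, and then kills the centered sum $V^N$ using the conditional orthogonality $\E[\chi^N_i(t)\chi^N_j(t)\mid\tau^N_i,\tau^N_j]=0$. You instead split off the predictable compensator $C^N(t)=\int_0^t\bar\lambda(t-s)\bar\Upsilon^N(s)\,ds$, so that your remainder $M^N$ equals the paper's $V^N$ plus $\int_0^t\bar\lambda(t-s)\,d\bar M^N_A(s)$. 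This buys two things: the oscillation control of $C^N$ becomes purely deterministic (your bound $(\lambda^\ast)^2\delta+\lambda^\ast\sup_{v\le\delta}\int_0^T|\bar\lambda(r+v)-\bar\lambda(r)|\,dr$ is correct and uniform in $N$ and $t$, whereas the paper must separately estimate $\int_0^t\Lambda_\delta(t-s)\,d\bar M^N_A(s)$), and all second moments for $M^N$ come from the PRM isometry on the enlarged space $\R_+^2\times D$ rather than from the orthogonality of the $\chi^N_i$. The f.d.d.\ step (Lemma~\ref{le:Portmanteau} applied to $g_N(ds)\to d\bar A$, with the correct observation that the discontinuity set of $s\mapsto\bar\lambda(t-s)$ is $d\bar A$--null), the use of Lemmas~\ref{Lem-20} and \ref{le:convD-C}, and your jump-crossing estimate via marked PRMs with mean measure $ds\,du\,G_{\xi^j}(dw)$ are exactly the paper's ingredients (the latter is its estimate \eqref{eqn-Vn-diff-p2-1}), just reassembled.

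The one step that fails as stated is your announced sufficient condition for $M^N$: it is \emph{not} true that $\sup_{t\le T}\E\big[\sup_{v\le\delta}|M^N(t+v)-M^N(t)|^2\big]\to0$ as $N\to\infty$ for fixed $\delta$, and your own estimates cannot produce it. When you bound $\sup_{v\le\delta}$ of a compensated integral by the sup of the $Q$--integral plus the sup of its compensator, the compensator pieces do not inherit any factor $N^{-1}$: the fresh-window term contributes $(\lambda^\ast)^2\delta$ deterministically, and the jump-crossing part of the moving-argument term contributes $\lambda^\ast\sum_j\int_0^t\big[G_{\xi^j}(t+\delta-s)-G_{\xi^j}(t-s)\big]\bar\Upsilon^N(s)\,ds$, which is of order $\delta$ uniformly in $N$ (and genuinely so, e.g.\ when $\xi^j$ has a density bounded below). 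What your estimates actually yield is $\E\big[\sup_{v\le\delta}|M^N(t+v)-M^N(t)|^2\big]\le C\big(\delta^2+N^{-1}\big)$ on the event where the $\varphi_{T+\delta}(\delta)$--terms are already below $\varepsilon$ (which holds deterministically for $\delta$ small, since $\bar A^N(T)\le1$). That weaker bound is still exactly what condition (ii) of Lemma~\ref{Lem-20} needs: dividing by $\delta\varepsilon^2$ and taking $\limsup_N$ leaves $O(\delta)$, which vanishes as $\delta\to0$. So the proof closes, but only after replacing your ``$\to0$ as $N\to\infty$ for each fixed $\delta$'' target by the correct $O(\delta^2)+o_N(1)$ bookkeeping — the same split between terms that vanish in $N$ and terms that are $o(\delta)$ after division by $\delta$ that organizes the paper's proof of \eqref{eqn-Vn-diff-3} and \eqref{eqn-Vn-diff-1}. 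The final paragraph of your argument is also only a sketch (``should deliver''), but the estimates it names are the right ones.
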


\begin{proof}
Let 
\begin{align} \label{eqn-brevefrakI}
\breve{\mathfrak{I}}^N_1(t) := 
N^{-1}\sum_{i=1}^{A^N(t)}\bar\lambda (t-\tau^N_i) = \int_0^t \bar\lambda (t-s) d \bar{A}^N(s), \quad t \ge 0. 
\end{align}
The proof will be split in two steps.
\bigskip

{\sc Step 1. Convergence of $\breve{\mathfrak{I}}^N_1$}

Under Assumption \ref{AS-lambda},  applying  Lemmas \ref{lem-barS-tight} and \ref{le:Portmanteau} and the continuous mapping theorem, we obtain that, as $N\to\infty$, all finite dimensional distributions of 
$\breve{\mathfrak{I}}^N_1$ converge to those of $\bar{\mathfrak{I}}_1$.
It remains to establish condition (ii) from Lemma \ref{Lem-20} in order to deduce from Lemma \ref{le:convD-C} that
\begin{align} \label{eqn-brevefrakI-conv}
\breve{\mathfrak{I}}^N_1 
\Rightarrow \bar{\mathfrak{I}}_1 \qinq D \qasq N \to \infty.
\end{align} 
That is, we need to show that
\begin{equation}\label{eq:cvchech}
\lim_{\delta \to 0} \limsup_{N\to \infty} \frac{1}{\delta} \sup_{t \in [0,T]} \P \left(\sup_{u \in [0,\delta]}\big| \breve{\mathfrak{I}}^N_1(t+u) - \breve{\mathfrak{I}}^N_1(t) \big|> \ep \right) =0.
\end{equation}
We have for $t, u \ge 0$,
\begin{align*}
\big| \breve{\mathfrak{I}}^N_1(t+u) - \breve{\mathfrak{I}}^N_1(t) \big| 
&\le  \left| N^{-1}\sum_{i=1}^{A^N(t)}\big( \bar\lambda(t+u-\tau^N_i) - \bar\lambda(t-\tau^N_i) \big)    \right| \non\\
& \quad +  N^{-1}  \sum_{i=A^N(t)+1}^{A^N(t+u)} \bar\lambda(t+u-\tau^N_i)   \\
& =:\Delta^{N,1}_{t,u}+\Delta^{N,2}_{t,u}. 
\end{align*}
 We first note that by \eqref{eqn-int-Phi-bound},
 \begin{align*}
\sup_{0 \le u \le \delta} \Delta^{N,2}_{t,u}
&  \le \lambda^* \big( \bar{A}^N(t+\delta) - \bar{A}^N(t)\big) \\
& \le   (\lambda^*)^2 \delta + \lambda^* \big(\bar{M}^N_A(t+\delta)-\bar{M}^N_A(t)\big), 
 \end{align*}
so that by \eqref{eqn-barMn-conv},   for any $T>0$, $\ep>0$, provided $\delta<\eps/(4(\lambda^\ast)^2)$,
\begin{align*} 
\P \bigg(\sup_{0 \le u \le \delta}  \Delta^{N,2}_{t,u} > \ep/2 \bigg)& \le  \P\left(\left|\bar{M}^N_A(t+\delta)-\bar{M}^N_A(t)\right|>\eps/4\lambda^\ast\right)\\
&\to0,\ \text{ as }N\to\infty,
\end{align*}
and consequently,
\begin{equation}\label{eqn-Vn-diff-3}
\limsup_{N\to \infty} \frac{1}{\delta} \sup_{t \in [0,T]} \P \bigg(\sup_{u \in [0,\delta]}\big| \Delta^{N,2}_{t,u} \big|> \ep/2 \bigg) =0.
\end{equation} 
 We now consider the first term $\Delta^{N,1}_{t,u}$.  
 Let
 \[\Lambda_\delta(t):=\sup_{u\le\delta}|\bar\lambda(t+u)-\bar\lambda(t)|\,.\]
We have
\[ \sup_{u\le\delta}\Delta^{N,1}_{t,u}\le\int_0^t\Lambda_\delta(t-s)d\bar A^N(s)\,,\]
and
\begin{align*}
\P\bigg(\sup_{u\le\delta}|\Delta^{N,1}_{t,u}|>\frac{\ep}{2}\bigg)&\le
\P\left(\int_0^t\Lambda_\delta(t-s)d\bar A^N(s)>\frac{\ep}{2}\right)\\
&\le
\P\left(\left|\int_0^t\Lambda_\delta(t-s)d\bar M_A^N(s)\right|>\frac{\ep}{4}\right)+\P\left(\int_0^t\Lambda_\delta(t-s)\bar\Upsilon^N(s)ds>\frac{\ep}{4}\right)\,.
\end{align*}
It is not hard to show that for any $\delta>0$,
\[ \limsup_{N\to +\infty}\frac{1}{\delta}\sup_{t\in[0, T]}\P\left(\left|\int_0^t\Lambda_\delta(t-s)d\bar M_A^N(s)\right|>\frac{\ep}{4}\right)=0\,.\]
Next we note that for any $t\in[0,T]$,
\begin{align*}
\int_0^t\Lambda_\delta(t-s)\bar\Upsilon^N(s)ds&\le\lambda^\ast\int_0^t\Lambda_\delta(t-s)ds\\
&\le\lambda^\ast\int_0^T\Lambda_\delta(s)ds\,.
\end{align*}
Since $\bar\lambda$ is right continuous and bounded by $\lambda^*$, this last expression tends to $0$ as $\delta\to0$. Consequently, for $\delta>0$ small enough, 
\[ \sup_N\sup_{t\in[0,T]}\P\left(\int_0^t \Lambda_\delta(t-s)\bar\Upsilon^N(s)ds>\frac{\ep}{4}\right)=0\,.\]
 It follows that \eqref{eqn-Vn-diff-3} holds true with $\Delta^{N,2}_{t,u}$ replaced by $\Delta^{N,1}_{t,u}$.
We have completed the proof of \eqref{eq:cvchech}, hence of  \eqref{eqn-brevefrakI-conv}. 
\bigskip

{\sc Step 2. $\mathfrak{I}^N_1-\breve{\mathfrak{I}}^N_1\to0$}

Now it remains to show that, as $N\to\infty$, 
\begin{equation} \label{eqn-Vn-conv}
V^N:= \bar{\mathfrak{I}}^N_1 - \breve{\mathfrak{I}}^N_1 \to 0 \qinq D \ \text{ in probability}. 
\end{equation}
We have 
\begin{align*}
V^N(t) = N^{-1} \sum_{i=1}^{A^N(t)} \chi^N_i(t), \quad \chi^N_i(t):= \lambda_i (t-\tau^N_i) - \bar\lambda (t-\tau^N_i).
\end{align*}
 $\chi_i^N(t)$ clearly satisfies $\E\big[\chi^N_i(t)\big]=0$ and $\E\big[\chi^N_i(t)\chi^N_j(t)|\tau^N_i,\tau^N_j]=0$. Thus, 
\begin{align*}
\E\big[V^N(t)^2\big] = N^{-2} \E\Bigg[ \sum_{i=1}^{A^N(t)} \nu(t-\tau^N_i)\Bigg]   = N^{-1}  \E \bigg[ \int_0^t \nu(t-s) d \bar{A}^N(s) \bigg],
\end{align*}
where $\nu(t) := E[ (\lambda_i (t) - \bar\lambda (t))^2]$ and $\nu(t)< (\lambda^*)^2$ under Assumption \ref{AS-lambda}. 
We easily obtain that for each $t\ge0$,
$$
V^N(t)\to 0\  \text{ in probability, } \qasq N \to \infty\, .
$$

It remains to establish condition (ii) of Lemma \ref{Lem-20}, i.e., that for any $T>0$, $\ep>0$, 
\begin{align} \label{eqn-Vn-diff-tight}
\lim_{\delta \to 0} \limsup_{N\to \infty} \frac{1}{\delta} \sup_{t \in [0,T]} \P \left(\sup_{u \in [0,\delta]}\big| V^N(t+u) - V^N(t) \big|> \ep \right) =0.
\end{align}
We have for $t, u \ge 0$,
\begin{align*}
\big| V^N(t+u) - V^N(t) \big| 
&\le  \left| N^{-1}\sum_{i=1}^{A^N(t)}\big( \lambda_i(t+u-\tau^N_i) - \lambda_i(t-\tau^N_i) \big)    \right| \non\\
&\quad +   \left| N^{-1}\sum_{i=1}^{A^N(t)}\big(  \bar\lambda(t+u-\tau^N_i)  - \bar\lambda(t-\tau^N_i) \big)    \right| \non \\
& \quad + \left|  N^{-1}  \sum_{i=A^N(t)+1}^{A^N(t+u)}\big( \lambda_i(t+u-\tau^N_i) - \bar\lambda(t+u-\tau^N_i) \big)   \right|. 
\end{align*}
The second term has already been treated in {\sc Step 1}, and the treatment of the third term is the same as that of the second term in the analogous inequality in {\sc Step 1} in \eqref{eqn-Vn-diff-3}. It remains to treat the first term, which we denote by $\Phi^{N,1}_{t,u}$.
By Assumption~\ref{AS-lambda},
\begin{align*}
\Phi^{N,1}_{t,u}&\le N^{-1}\sum_{i=1}^{A^N(t)}\sum_{j=1}^k|\lambda^j_i(t+u-\tau^N_i)-\lambda^j_i(t-\tau^N_i)|{\bf1}_{\xi_i^{j-1}\le t-\tau^N_i<t+u-\tau^N_i<\xi_i^j}\\&\qquad
+\lambda^\ast N^{-1}\sum_{i=1}^{A^N(t)}\sum_{j=1}^{k}{\bf1}_{ t-\tau^N_i\le\xi_i^j<t+u-\tau^N_i}\\
&\le \varphi_{T+\delta}(u)\bar{A}^N(t)+ \lambda^\ast\sum_{j=1}^{k}N^{-1}\sum_{i=1}^{A^N(t)}{\bf1}_{ t-\tau^N_i\le\xi_i^j<t+u-\tau^N_i}\,. 
\end{align*}
The right hand side being nondecreasing in $u$,  we deduce that
\begin{align*}
\sup_{0\le u\le\delta}\Phi^{N,1}_{t,u}\le \varphi_{T+\delta}(\delta)\bar A^N(t)+\lambda^\ast\sum_{j=1}^{k}N^{-1}\sum_{i=1}^{A^N(t)}{\bf1}_{ t-\tau^N_i\le\xi_i^j<t+\delta-\tau^N_i}\,.
\end{align*}
The first term on the right is the same as the one which appeared in the upper bound of $\Delta^{N,1}_{t,u}$ in 
{\sc Step 1}. We need only consider the second term.
We have
\begin{align} \label{eqn-Vn-diff-p2-1}
& \P \Bigg(\lambda^\ast\sum_{j=1}^{k}N^{-1}\sum_{i=1}^{A^N(t)}{\bf1}_{ t-\tau^N_i\le\xi_i^j<t+\delta-\tau^N_i}> \ep \Bigg)\non\\
&  \le  \frac{1}{\ep^2} \E \Bigg[ \Bigg( \lambda^\ast\sum_{j=1}^{k}N^{-1}\sum_{i=1}^{A^N(t)}{\bf1}_{ t-\tau^N_i\le\xi^j_i<t+\delta-\tau^N_i}\Bigg)^2 \Bigg] \non \\
& \le \frac{2}{\ep^2} \E \Bigg[ \Bigg( \lambda^\ast\sum_{j=1}^{k}N^{-1} \int_0^t  \int_0^\infty \int_{t-s}^{t+\delta-s} \bone_{u \le \Upsilon^N(s^-)} \overline{Q}_j(ds, du, d \xi )
\Bigg)^2 \Bigg] \non  \\
& \quad + \frac{2}{\ep^2} \E \Bigg[ \Bigg( \lambda^\ast\sum_{j=1}^{k} N^{-1}\int_0^t  \big(F_j(t+\delta-s) - F_j(t-s)\big) \Upsilon^N(s)ds \Bigg)^2 \Bigg],
\end{align}
where  $Q_j(ds,du,d\xi)$ is a PRM on $\RR_+\times \RR_+\times \RR_+$ with mean measure $ds du F_j(d\xi)$, and $\overline{Q}_j(ds,du,d\xi)$ is the corresponding compensated PRM. 
Observe that 
\begin{align*} 
\E \left[ \left( N^{-1}\!\! \int_0^t \!\!  \int_0^\infty\!\! \int_{t-s}^{t+\delta-s}\!\!\! \bone_{u \le \Upsilon^N(s^-)} \overline{Q}_j(ds, du, d \xi ) \right)^2 \right]  
& = N^{-2} \E \left[\int_0^t  \big(F_j(t+\delta-s) - F_j(t-s)\big)  \Upsilon^N(s) ds \right] \non\\
& \le N^{-1}\lambda^* \int_0^t  \big(F_j(t+\delta-s) - F_j(t-s)\big) ds, 
\end{align*}
which tends to $0$ as $N\to\infty$, for any $\delta>0$. Moreover, 
\begin{align*} 
  \E \left[ \left(N^{-1}\!\!\int_0^t \!\! \big(F_j(t+\delta-s) - F_j(t-s)\big) \Upsilon^N(s)ds \right)^2 \right] 
  &\le \left(\lambda^*\!\! \int_0^t \!\! \big(F_j(t+\delta-s) - F_j(t-s)\big) ds \right)^2\\
  &\le\left(\lambda^\ast\left(\int_t^{t+\delta}F_j(u)du-\int_0^\delta F_j(u)du\right)\right)^2\\
  &\le(\lambda^\ast\delta)^2\,.
\end{align*}
   We deduce that for any $\ep>0$,
\begin{equation}\label{eqn-Vn-diff-1}
\limsup_{N\to \infty} \frac{1}{\delta} \sup_{t \in [0,T]} \P \bigg(\sup_{u \in [0,\delta]}\big| \Phi^{N,1}_{t,u} \big|> \ep \bigg) \to0, \qasq \delta\to0.
\end{equation}
We have proved \eqref{eqn-Vn-diff-tight}. 
This completes the proof of the lemma. 
\end{proof}

From the proof of Lemma \ref{lem-barfrakI}, clearly $(\bar{A}^N,\breve{\mathfrak{I}}^N_1)\Rightarrow
(\bar{A},\bar{\mathfrak{I}}_1)$ along a subsequence. It also follows from Lemma \ref{le:J0} and the proof of Lemma \ref{lem-barfrakI} that $\bar{\mathfrak{I}}^N-\breve{\mathfrak{I}}^N_1\to\bar{\mathfrak{I}}_{0,1}+\bar{\mathfrak{I}}_{0,2}$ in probability in $D$, as $N\to\infty$.
Hence $(\bar{A}^N,{\mathfrak{I}}^N)\Rightarrow(\bar{A},\bar{\mathfrak{I}})$ along the same subsequence as above, where $\bar{\mathfrak{I}}=\bar{\mathfrak{I}}_{0,1}+\bar{\mathfrak{I}}_{0,2}+\bar{\mathfrak{I}}_1$. It follows that, along that subsequence,
\begin{align} \label{eqn-barPhi-conv}
\int_0^{\cdot} \bar{\Upsilon}^N(s) ds = \int_0^{\cdot} \bar{S}^N(s)\bar{\mathfrak{I}}^N(s)  ds  \Rightarrow  \int_0^{\cdot} \bar{S}(s)\bar{\mathfrak{I}}(s)  ds \qinq D, 
\end{align}
and also
\begin{align} \label{eqn-barAn-conv-A}
\bar{A}^N \Rightarrow \bar{A} =  \int_0^{\cdot} \bar{S}(s)\bar{\mathfrak{I}}(s)  ds \qinq D.
\end{align}
Therefore, the limits $\big(\bar{S},\bar{\mathfrak{I}}\big)$ satisfy the integral equations \eqref{eqn-barS} and \eqref{eqn-barfrakI} in Theorem \ref{thm-FLLN}.  Finally, the existence and uniqueness of a deterministic solution to the integral equations follows from applying Gronwall's inequality in a straightforward way, and the whole sequence converges in probability. This completes the proof of the convergence of $\big(\bar{S}^N,\bar{\mathfrak{I}}^N\big) \to\big(\bar{S},\bar{\mathfrak{I}}\big)$ in $D^2$ in probability.  

\subsection{Convergence of $(\bar{E}^N,\bar{I}^N,  \bar{R}^N)$ }
The proof for the convergence of $(\bar{E}^N, \bar{I}^N, \bar{R}^N)$  will be similar to the previous step.

For the initially exposed and infectious individuals, let
\begin{align*}
&\bar{E}^N_0(t) := N^{-1}\sum_{j=1}^{E^N(0)}{\bf1}_{\zeta^0_j>t}\,, \quad  \bar{I}^N_{0,1}(t) := N^{-1} \sum_{k=1}^{I^N(0)}{\bf1}_{\eta^{0,I}_k>t}\,,  \quad  \bar{I}^N_{0,2}(t) := N^{-1}\sum_{j=1}^{E^N(0)}{\bf1}_{\zeta^0_j + \eta^0_j>t}\,,
\\
&\bar{R}^N_{0,1}(t) := N^{-1} \sum_{k=1}^{I^N(0)}{\bf1}_{\eta^{0,I}_k\le t}\,, \quad \bar{R}^N_{0,2}(t) :=N^{-1}\sum_{j=1}^{E^N(0)}{\bf1}_{\zeta^0_j + \eta^0_j\le t} \,. 
\end{align*}
By the FLLN for empirical processes, we obtain the following lemma. 
\begin{lemma} 
Under Assumption  \ref{AS-FLLN}, as $ N \to \infty$,
\begin{align} \label{eqn-barEIR0-conv}
\big(\bar{E}^N_0, \bar{I}^N_{0,1}, \bar{I}^N_{0,2}, \bar{R}^N_{0,1}, \bar{R}^N_{0,2}\big) \to 
\big(\bar{E}_0, \bar{I}_{0,1}, \bar{I}_{0,2}, \bar{R}_{0,1}, \bar{R}_{0,2}\big)
\qinq D^5 \ \text{in probability,}
\end{align}
where 
\begin{align*}
& \bar{E}_0(t) = \bar{E}(0) G_0^c(t), \quad \bar{I}_{0,1}(t) = \bar{I}(0) F_{0,I}^c(t), \quad \bar{I}_{0,2}(t) = \bar{E}(0)  \Psi_0(t) ,   \\
& \bar{R}_{0,1}(t) = I(0) F_{0,I}(t), \quad \bar{R}_{0,2}(t) = \bar{E}(0) \Phi_0(t).  
\end{align*}
\end{lemma}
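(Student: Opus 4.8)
The plan is to treat each of the five processes as an empirical average over an i.i.d. sequence and to apply a functional law of large numbers, exactly as in the proof of Lemma~\ref{le:J0}, the only wrinkle being the random number of summands $E^N(0)$ and $I^N(0)$. Note first that $\bar{E}^N_0$, $\bar{I}^N_{0,2}$ and $\bar{R}^N_{0,2}$ are averages of the bounded c\`adl\`ag functionals $\mathbf 1_{\zeta^0_j>\cdot}$, $\mathbf 1_{\zeta^0_j\le\cdot<\zeta^0_j+\eta^0_j}$ and $\mathbf 1_{\zeta^0_j+\eta^0_j\le\cdot}$ of the i.i.d. pairs $(\zeta^0_j,\eta^0_j)_{j\ge1}$, whose expectations are $G_0^c$, $\Psi_0$ and $\Phi_0$ respectively (using $\Psi_0=G_0-\Phi_0$); similarly $\bar{I}^N_{0,1}$ and $\bar{R}^N_{0,1}$ are averages of $\mathbf 1_{\eta^{0,I}_k>\cdot}$ and $\mathbf 1_{\eta^{0,I}_k\le\cdot}$ over the i.i.d. sequence $(\eta^{0,I}_k)_{k\ge1}$, with expectations $F_{0,I}^c$ and $F_{0,I}$.

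First I would introduce the deterministic-count versions obtained by replacing the upper limits $E^N(0)$ and $I^N(0)$ by $\lfloor N\bar{E}(0)\rfloor$ and $\lfloor N\bar{I}(0)\rfloor$, for instance $\widetilde{E}^N_0(t):=N^{-1}\sum_{j=1}^{\lfloor N\bar{E}(0)\rfloor}\mathbf 1_{\zeta^0_j>t}$, and likewise for the four others. For these, the LLN for i.i.d. random elements of $D$ (Theorem~1 in \cite{rao1963law} or Corollary~7.10 in \cite{ledoux2013probability}, already invoked in Lemma~\ref{le:J0}) gives, locally uniformly in $t$ and in probability, the convergence of $(\widetilde{E}^N_0,\widetilde{I}^N_{0,1},\widetilde{I}^N_{0,2},\widetilde{R}^N_{0,1},\widetilde{R}^N_{0,2})$ to $(\bar{E}(0)G_0^c,\bar{I}(0)F_{0,I}^c,\bar{E}(0)\Psi_0,\bar{I}(0)F_{0,I},\bar{E}(0)\Phi_0)$. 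Since all five limits are deterministic, coordinatewise convergence in probability is automatically joint, so this holds in $D^5$.

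It then remains to control the difference between the random-count and deterministic-count versions. Since every summand lies in $[0,1]$, I would bound, uniformly in $t$, $|\bar{E}^N_0(t)-\widetilde{E}^N_0(t)|\le N^{-1}|E^N(0)-\lfloor N\bar{E}(0)\rfloor|\le|\bar{E}^N(0)-\bar{E}(0)|+N^{-1}$, which tends to $0$ in probability by Assumption~\ref{AS-FLLN}; the identical bound (with $\bar{I}^N(0)-\bar{I}(0)$ for the two processes built from $(\eta^{0,I}_k)$) handles the other four. Combining the two steps through the triangle inequality yields \eqref{eqn-barEIR0-conv}.

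I do not expect a genuine obstacle here: the result is a functional LLN for empirical processes, and the only real point is the decoupling of the random index from the i.i.d. summands, which is precisely the device used in Lemma~\ref{le:J0}. The functional convergence itself is clean because each functional decomposes into monotone pieces (empirical complementary c.d.f.'s, and, for $\bar{I}^N_{0,2}$, the difference $\mathbf 1_{\zeta^0_j\le\cdot}-\mathbf 1_{\zeta^0_j+\eta^0_j\le\cdot}$), so a Glivenko--Cantelli argument delivers uniform convergence on $\R_+$, which in particular implies convergence in the Skorohod $J_1$ topology.
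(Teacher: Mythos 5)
Your proposal is correct and follows essentially the same route as the paper: introduce deterministic-count versions with $\lfloor N\bar E(0)\rfloor$ and $\lfloor N\bar I(0)\rfloor$ summands, apply the functional LLN for i.i.d.\ random elements of $D$ (as in Lemma~\ref{le:J0}), and then control the discrepancy coming from the random number of summands via Assumption~\ref{AS-FLLN}. Your uniform bound $\sup_t|\bar E^N_0(t)-\widetilde E^N_0(t)|\le N^{-1}|E^N(0)-\lfloor N\bar E(0)\rfloor|$ is in fact slightly cleaner than the paper's conditional-expectation estimate, but the argument is the same in substance.
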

\begin{proof}
Recall the definition of $\big(\widetilde{\mathfrak{I}}^N_{0,1}, \widetilde{\mathfrak{I}}^N_{0,2})$ in \eqref{eqn-tilde-frakI-0}. Similarly,  define 
$\big( \widetilde{E}^N_0, \widetilde{I}^N_{0,1}, \widetilde{I}^N_{0,2}, \widetilde{R}^N_{0,1}, \widetilde{R}^N_{0,2}\big)$ by replacing $E^N(0)$ and $I^N(0)$ with $ N \bar{E}(0)$ and $N \bar{I}(0)$, respectively, in the definitions of
$\big(\bar{E}^N_0, \bar{I}^N_{0,1}, \bar{I}^N_{0,2}, \bar{R}^N_{0,1},\\ \bar{R}^N_{0,2}\big)$. 
By the i.i.d. assumption of $\{\lambda^{0,I}_k\}_{k\ge 1}$ and the definition of $\eta^{0,I}_k$ from $\lambda^{0,I}_k$ in \eqref{eqn-zeta-lambda-0I},   we obtain that, as $N\to\infty$, 
$$\big(\widetilde{\mathfrak{I}}^N_{0,1}, \widetilde{I}^N_{0,1}, \widetilde{R}^N_{0,1}\big) \to \big(\bar{\mathfrak{I}}_{0,1}, \bar{I}_{0,1}, \bar{R}_{0,1}\big) \qinq D^3 \ \text{ in probability.} 
$$

  Similarly, by the i.i.d. assumption of $\{\lambda^{0}_j\}_{j\ge 1}$ and the definition of $(\zeta^{0}_j, \eta^0_j)$ from $\lambda^{0}_j$ in \eqref{eqn-zeta-lambda-0},   we obtain that, as $N\to\infty$,
\[\big(\widetilde{E}^N_{0}, \widetilde{I}^N_{0,2}, \widetilde{R}^N_{0,2}\big) \to \big(\bar{E}_{0}, \bar{I}_{0,2}, \bar{R}_{0,2}\big) \qinq D^3 \ \text{ in probability.} 
\]
Then it remains to show that, as $N\to \infty$, 
\[\big(\widetilde{E}^N_0 -\bar{E}^N_0, \widetilde{I}^N_{0,1}- \bar{I}^N_{0,1}, \widetilde{I}^N_{0,2}- \bar{I}^N_{0,2}, \widetilde{R}^N_{0,1} - \bar{R}^N_{0,1}, \widetilde{R}^N_{0,2}-\bar{R}^N_{0,2}\big) \to 
0 
\qinq D^5\ \text{ in probability.}\]
Similarly as in the proof of Lemma \ref{le:J0}, we have
\begin{align*}
\widetilde{I}^N_{0,2}(t) - \bar{I}^N_{0,2}(t) = \text{sign}(\bar{E}(0) - \bar{E}^N(0)) N^{-1} \sum_{j=N (\bar{E}^N(0) \wedge \bar{E}(0)) }^{N (\bar{E}^N(0) \vee \bar{E}(0)) }\bone_{\zeta_j^0+\eta_j^0>t},
\end{align*}
and 
\begin{align*}
\E\Bigg[ N^{-1} \sum_{j=N (\bar{E}^N(0) \wedge \bar{E}(0)) }^{N (\bar{E}^N(0) \vee \bar{E}(0)) }\bone_{\zeta_j^0+\eta_j^0>t} \, \Bigg| \, \sF^N_0 \Bigg]\le  \Psi_0(t) |\bar{E}(0) - \bar{E}^N(0)|\to 0\qasq N \to \infty.
\end{align*}
The other convergences follow by a similar argument.  
This completes the proof. \end{proof}

For the newly infected individuals, let 
\begin{align*}
& \bar{E}^N_1(t) := N^{-1} \sum_{i=1}^{A^N(t)}{\bf1}_{\tau^N_i+\zeta_i>t}\,, \quad \bar{I}^N_1(t) := N^{-1} \sum_{i=1}^{A^N(t)}{\bf1}_{\tau^N_i+\zeta_i\le t<\tau^N_i+\zeta_i+\eta_i}\,,  \\
& \bar{R}^N_1(t) := N^{-1} \sum_{i=1}^{A^N(t)}{\bf1}_{\tau^N_i+\zeta_i+\eta_i\le t}\,.  
\end{align*}

\begin{lemma}\label{le:final}
Under Assumptions \ref{AS-lambda} , \ref{AS-FLLN} and \ref{indep}, as $N \to \infty$,
\begin{align} \label{eqn-barEIR-conv}
\big(\bar{E}^N_1, \bar{I}^N_1, \bar{R}^N_1 \big)
\to \big(\bar{E}_1,\bar{I}_1, \bar{R}_1 \big) \qinq D^3 \ \text{ in probability,}
\end{align}
where 
\begin{align*}
 \bar{E}_1(t) &:=\int_0^tG^c(t-s)\bar{S}(s)\bar{\mathfrak{I}}(s)ds\,,\quad 
 \bar{I}_1(t) :=  \int_0^t \Psi(t-s)\bar{S}(s)\bar{\mathfrak{I}}(s)ds\,, \\
 \bar{R}_1(t) &:=  \int_0^t \Phi(t-s)\bar{S}(s)\bar{\mathfrak{I}}(s)ds\,. 
\end{align*}
\end{lemma}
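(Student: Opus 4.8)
The plan is to treat the three processes separately and, for each, to follow the two-step strategy of Lemma~\ref{lem-barfrakI}, using that by the previous subsection $\bar{A}^N\to\bar{A}=\int_0^\cdot\bar{S}(s)\bar{\mathfrak{I}}(s)ds$ in $D$ in probability, with $\bar{A}\in C_\uparrow$ and absolutely continuous. The key observation is that each summand is the indicator of an event governed by the pair $(\zeta_i,\eta_i)$ (a measurable function of the mark $\lambda_i$, independent of the infection time $\tau^N_i$), whose conditional expectation given $\tau^N_i$ is a deterministic function of $t-\tau^N_i$: namely $G^c(t-\tau^N_i)$ for $\bar{E}^N_1$, $\Psi(t-\tau^N_i)$ for $\bar{I}^N_1$, and $\Phi(t-\tau^N_i)$ for $\bar{R}^N_1$. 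Joint convergence in $D^3$ in probability then follows automatically from the three marginal convergences, since all limits are deterministic.

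\emph{Step 1.} For each process I would replace the random indicators by these conditional expectations, setting
\[
\breve{E}^N_1(t):=N^{-1}\sum_{i=1}^{A^N(t)}G^c(t-\tau^N_i)=\int_0^tG^c(t-s)\,d\bar{A}^N(s),
\]
and analogously $\breve{I}^N_1$, $\breve{R}^N_1$ with $\Psi$, $\Phi$, and prove $\breve{E}^N_1\Rightarrow\bar{E}_1$, $\breve{I}^N_1\Rightarrow\bar{I}_1$, $\breve{R}^N_1\Rightarrow\bar{R}_1$. Finite--dimensional convergence follows exactly as in Lemma~\ref{lem-barfrakI} from Lemma~\ref{le:Portmanteau} (applicable since $\bar{A}\in C_\uparrow$ and the fixed integrands $G^c,\Psi,\Phi$ are bounded and càdlàg with at most countably many discontinuities, which are $\bar{A}(ds)$--null) together with the continuous mapping theorem. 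Condition (ii) of Lemma~\ref{Lem-20} is verified as in Step~1 of Lemma~\ref{lem-barfrakI}: the mass added on $(t,t+\delta]$ is bounded by $\lambda^*(\bar{A}^N(t+\delta)-\bar{A}^N(t))$ and controlled through \eqref{eqn-barMn-conv}, while the change of the integrand over $[0,t]$ is bounded by $\int_0^t\Lambda_\delta(t-s)\,d\bar{A}^N(s)$ with $\Lambda_\delta$ now the $\delta$--oscillation of $G^c$ (resp.\ $\Psi$, $\Phi$); although these functions may jump, $\int_0^T\Lambda_\delta(s)ds\to0$ as $\delta\to0$ by dominated convergence, since $\Lambda_\delta\le1$ and $\Lambda_\delta(s)\to0$ at every continuity point. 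Lemma~\ref{le:convD-C} then yields the convergence in $D$.

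\emph{Step 2.} It remains to show that $V^N_E:=\bar{E}^N_1-\breve{E}^N_1$, $V^N_I:=\bar{I}^N_1-\breve{I}^N_1$, $V^N_R:=\bar{R}^N_1-\breve{R}^N_1$ converge to $0$ in $D$ in probability. Writing $V^N_E(t)=N^{-1}\sum_{i=1}^{A^N(t)}\chi^N_i(t)$ with $\chi^N_i(t)=\mathbf{1}_{\tau^N_i+\zeta_i>t}-G^c(t-\tau^N_i)$, the summands are conditionally centred and uncorrelated exactly as in Step~2 of Lemma~\ref{lem-barfrakI} (by the same PRM--with--marks structure), so with $\nu_E(t):=\mathrm{Var}(\mathbf{1}_{\zeta>t})=G^c(t)G(t)\le\tfrac14$ one gets $\E[V^N_E(t)^2]=N^{-1}\E[\int_0^t\nu_E(t-s)d\bar{A}^N(s)]\le\tfrac14 N^{-1}\lambda^*t\to0$, hence pointwise convergence in probability (the same holds for $V^N_I,V^N_R$). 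For the oscillation bound (ii) the increment of $\breve{E}^N_1$ is handled in Step~1, so it suffices to control the increment of $\bar{E}^N_1$; splitting into the contribution of the $A^N(t+\delta)-A^N(t)$ newly infected individuals (bounded as before) and the change of the indicators of individuals already present at $t$, the latter is dominated by the boundary--crossing count $N^{-1}\sum_{i=1}^{A^N(t)}\mathbf{1}_{t-\tau^N_i<\zeta_i\le t+\delta-\tau^N_i}$. For $\bar{I}^N_1$ one obtains two such counts, recording the crossings of $\zeta_i$ and of $\zeta_i+\eta_i$, and for $\bar{R}^N_1$ a single one recording the crossing of $\zeta_i+\eta_i$.

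The main obstacle, precisely as in Step~2 of Lemma~\ref{lem-barfrakI}, is the uniform--in--$t$ control of these boundary--crossing counts. I would represent them via the PRM carrying the marks $(\zeta,\eta)$, split off the compensator, and estimate the two pieces by a second--moment argument: the compensated (martingale) part has second moment of order $N^{-1}\lambda^*\int_0^t(G(t+\delta-s)-G(t-s))ds\to0$ for fixed $\delta$, while the compensator part is bounded by $(\lambda^*\int_0^t(G(t+\delta-s)-G(t-s))ds)^2\le(\lambda^*\delta)^2$, using the telescoping estimate $\int_0^t(G(t+\delta-s)-G(t-s))ds=\int_t^{t+\delta}G(u)du-\int_0^\delta G(u)du\le\delta$, and analogously with $\Phi$ in place of $G$. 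This gives \eqref{eqn-Vn-diff-tight} for each of $V^N_E,V^N_I,V^N_R$, so by Lemma~\ref{le:convD-C} each converges to $0$ in $D$ in probability. Combining Steps 1 and 2 then yields $\bar{E}^N_1\to\bar{E}_1$, $\bar{I}^N_1\to\bar{I}_1$, $\bar{R}^N_1\to\bar{R}_1$ in $D$ in probability, and hence the joint convergence \eqref{eqn-barEIR-conv}.
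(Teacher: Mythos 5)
Your proposal is correct and follows essentially the same route as the paper: the same two-step scheme as Lemma~\ref{lem-barfrakI} (replace the indicators by their conditional expectations $G^c(t-\tau^N_i)$, $\Psi(t-\tau^N_i)$, $\Phi(t-\tau^N_i)$, prove convergence of the resulting Stieltjes integrals against $d\bar A^N$ via Lemmas~\ref{le:Portmanteau}, \ref{Lem-20} and \ref{le:convD-C}, then kill the fluctuation term by a conditional second-moment computation and a PRM boundary-crossing estimate with the telescoping bound $\int_t^{t+\delta}G-\int_0^\delta G\le\delta$). The only organizational difference is that the paper sidesteps a direct treatment of $\bar I^N_1$ by invoking the identity $\bar A^N=\bar E^N_1+\bar I^N_1+\bar R^N_1$ together with the already-established convergence of $\bar A^N$, whereas you handle the non-monotone kernel $\Psi$ head-on via two boundary-crossing counts (for $\zeta_i$ and $\zeta_i+\eta_i$) and a dominated-convergence argument for $\int_0^T\Lambda_\delta(s)\,ds\to0$ that correctly needs only right-continuity and boundedness of the kernel; both variants are valid.
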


\begin{proof}
We first note that we have the two identities $\bar A^N(t)=\bar{E}^N_1(t) +\bar{I}^N_1(t) +\bar{R}^N_1(t) $ and $\bar A(t)=\bar{E}_1(t) +\bar{I}_1(t) +\bar{R}_1(t) $, which  reflects the two facts:
\begin{align*}
1&=\bone_{\zeta_i\le t-\tau^N_i<\zeta_i+\eta_i}+\bone_{\zeta_i>t-\tau^N_i}+\bone_{\zeta_i+\eta_i\le t-\tau^N_i},\\
1&=\Psi(t-s)+G^c(t-s)+\Phi(t-s)\,.
\end{align*}
Consequently, since we already know that $\bar A^N(t)\to\bar A(t)$ in probability locally uniformly in $t$, we only need to establish the two convergences $\bar{E}^N_1\to\bar{E}_1$ and $\bar{R}^N_1\to\bar{R}_1$, from which the convergence $\bar{I}^N_1\to\bar{I}_1$ will follow as a corollary.

We shall apply the same argument as in Lemma \ref{lem-barfrakI}, but now we know that $\bar{A}^N\to \bar{A}$ in probability. Define
\begin{align*}
\breve{E}^N_1(t) &:= N^{-1} \sum_{i=1}^{A^N(t)} G^c(t- \tau^N_i) = \int_0^t G^c(t-s) d \bar{A}^N(s)\,, \\
 \breve{R}^N_1(t) & := N^{-1} \sum_{i=1}^{A^N(t)}\Phi(t- \tau^N_i) = \int_0^t \Phi(t-s) d \bar{A}^N(s)\,. 
\end{align*}
 Let us establish that $\bar{E}^N_1\to\bar{E}_1$. We shall then discuss why the same arguments work in the case of $\bar{R}^N_1$.
 
 {\sc Step 1}
It follows from Lemma \ref{le:Portmanteau} that for all $t>0$, $\breve{E}^N_1(t)\to\bar{E}_1(t)$ in probability. In order to 
establish that the convergence is in fact locally uniform in $t$, according to Lemma \ref{le:convD-C}, it remains to prove that condition (ii) in Lemma \ref{Lem-20} is satisfied, namely that
\begin{equation}\label{eq:cvchechE}
\lim_{\delta \to 0} \limsup_{N\to \infty} \frac{1}{\delta} \sup_{t \in [0,T]} \P \left(\sup_{u \in [0,\delta]}\big| \breve{E}^N_1(t+u) - \breve{E}^N_1(t) \big|> \ep \right) =0.
\end{equation}
We have
\begin{align*}
\breve{E}^N_1(t+u) - \breve{E}^N_1(t)&=\int_0^t[G^c(t+u-s)-G^c(t-s)]d\bar A^N(s)+\int_t^{t+u}G^c(t+u-s)d\bar A^N(s)\,, \\
\sup_{0<u\le\delta}|\breve{E}^N_1(t+u) - \breve{E}^N_1(t)|&\le
\int_0^t[G^c(t-s)-G^c(t+\delta-s)]d\bar A^N(s)+ \bar A^N(t+\delta)-\bar A^N(t)\,.
\end{align*}
The second term in the right hand side satisfies
\[   \bar A^N(t+\delta)-\bar A^N(t)\le\lambda^\ast\delta+ \bar M_A^N(t+\delta)-\bar M_A^N(t),\]
and since $\bar M_A^N$ tends to $0$ locally uniformly in $t$, 
\[ \limsup_N\sup_{t\in[0,T]}\frac{1}{\delta}\P\big(\bar A^N(t+\delta)-\bar A^N(t)>\ep\big)=0,\]
as soon as $\delta<\ep/\lambda^\ast$. Moreover
\begin{align*}
\P\left(\int_0^t[G^c(t-s)-G^c(t+\delta-s)]d\bar A^N(s)>\ep\right)&\le
\P\left(\left|\int_0^t[G^c(t-s)-G^c(t+\delta-s)]d\bar M_A^N(s)\right|>\ep/2\right)\\
&\quad+\P\left(\int_0^t[G^c(t-s)-G^c(t+\delta-s)]\bar\Upsilon^N(s)ds>\ep/2\right)\,.
\end{align*}
It is not hard to show that for any $\delta>0$,
\[ \limsup_N\frac{1}{\delta}\sup_{t\in[0, T]}\P\left(\left|\int_0^t[G^c(t-s)-G^c(t+\delta-s)]d\bar M_A^N(s)\right|>\ep/2\right)=0\,.\]
Next we note that for any $t\in[0,T]$,
\begin{align*}
\int_0^t[G^c(t-s)-G^c(t+\delta-s)]\bar\Upsilon^N(s)ds&\le\lambda^\ast\int_0^t[G^c(s)-G^c(s+\delta)]ds\\
&\le\lambda^\ast\int_0^T[G^c(s)-G^c(s+\delta)]ds\,.
\end{align*}
Since $G^c$ is right continuous and bounded by $1$, this last expression tends to $0$ as $\delta\to0$. Consequently, for $\delta>0$ small enough, 
\[ \sup_N\sup_{t\in[0,T]}\P\left(\int_0^t[G^c(t-s)-G^c(t+\delta-s)]\bar\Upsilon^N(s)ds>\ep/2\right)=0\,.\]
Thus, \eqref{eq:cvchechE} has been established, hence $\breve{E}^N_1(t)\to\bar{E}_1(t)$ in probability locally uniformly in $t$.
It remains to consider $\bar E^N_1-\breve E^N_1$, which we do in the next step.

{\sc Step 2}
Consider
\[ W^N(t):=\bar E^N_1(t)-\breve E^N_1(t)=\frac{1}{N}\sum_{i=1}^{A^N(t)}
\big(\bone_{\zeta_i>t-\tau^N_i}-G^c(t-\tau^N_i)\big)\,.\]
It is not hard to see that if $i\not=j$,
\[\E\Big[\big(\bone_{\zeta_i>t-\tau^N_i}-G^c(t-\tau^N_i)\big)\big(\bone_{\zeta_j>t-\tau^N_j}-G^c(t-\tau^N_j)\big)
\Big|\tau^N_i,\tau^N_j\Big]=0\,.\] 
Consequently, 
\begin{align*}
\E\big[\left(W^N(t)\right)^2\big]&=\frac{1}{N^2}\E \Bigg[\sum_{i=1}^{A^N(t)}G^c(t-\tau^N_i)(1-G^c(t-\tau^N_i)) \Bigg] \\
&=\frac{1}{N}\E\bigg[\int_0^tG^c(t-s)(1-G^c(t-s))d\bar A^N(s) \bigg] \\
&\to 0,\quad\text{as }N\to\infty\,.
\end{align*}
It remains to show that condition (ii) of Lemma \ref{Lem-20} holds, namely that
\begin{align} \label{eqn-Wn-diff-tight}
\lim_{\delta \to 0} \limsup_{N\to \infty} \frac{1}{\delta} \sup_{t \in [0,T]} \P \left(\sup_{u \in [0,\delta]}\big| W^N(t+u) - W^N(t) \big|> \ep \right) =0.
\end{align}
We have
\begin{align*}
|W^N(t+u) - W^N(t)|&\le\frac{1}{N}\sum_{i=1}^{A^N(t)}\big(\bone_{\zeta_i>t-\tau^N_i}-\bone_{\zeta_i>t+u-\tau^N_i}\big)+\frac{1}{N}\sum_{i=1}^{A^N(t)}\left(G^c(t-\tau^N_i)-G^c(t+u-\tau^N_i)\right)\\
&\quad+\left|\frac{1}{N}\sum_{i=A^N(t)+1}^{A^N(t+u)}\big(\bone_{\zeta_i>t+u-\tau^N_i}-G^c(t+u-\tau^N_i)\big)\right|\,.
\end{align*}
The second term has already been treated in {\sc Step 1}, as well as $\bar A^N(t+\delta)-\bar A^N(t)$, which bounds the third term. It remains to treat the first term. Let 
\begin{align*}
\Delta^N_1(t,u):&=\frac{1}{N}\sum_{i=1}^{A^N(t)}\bone_{t-\tau^N_i<\zeta_i\le t+u-\tau^N_i},\\
\sup_{u\le \delta}\Delta^N_1(t,u)&=\frac{1}{N}\sum_{i=1}^{A^N(t)}\bone_{t-\tau^N_i<\zeta_i\le t+\delta-\tau^N_i}\,,\\
\P\bigg(\sup_{u\le \delta}\Delta^N_1(t,u)>\ep\bigg)&
\le\frac{1}{\ep^2}\E\Bigg[\Bigg(\frac{1}{N}\sum_{i=1}^{A^N(t)}\bone_{t-\tau^N_i<\zeta_i\le t+\delta-\tau^N_i}\Bigg)^2\Bigg]\,.
\end{align*}
Let $P(ds,du,d\zeta)$ be a PRM on $\RR_+\times\RR_+\times\RR_+$ with mean measure $dsduG(d\zeta)$, and $\bar{P}$ the associated compensated measure. We have
\begin{align*}
\E\Bigg[\Bigg(\frac{1}{N}\sum_{i=1}^{A^N(t)}\bone_{t-\tau^N_i<\zeta_i\le t+\delta-\tau^N_i}\Bigg)^2\Bigg]
&=
\E\left[\left(\frac{1}{N}\int_0^t\int_0^\infty\int_{t-s}^{t+\delta-s}\bone_{u\le\Upsilon^N(s^-)}P(ds,du,d\zeta)\right)^2\right]
\\
&\le
2\E\left[\left(\frac{1}{N}\int_0^t\int_0^\infty\int_{t-s}^{t+\delta-s}\bone_{u\le\Upsilon^N(s^-)}\bar{P}(ds,du,d\zeta)\right)^2\right]\\
&\quad+2\E\left[\left(\frac{1}{N}\int_0^t(G^c(t-s)-G^c(t+\delta-s))\Upsilon^N(s)ds\right)^2\right]\,.
\end{align*}
The first term is of ordre $N^{-1}$, and tends to $0$ as $N\to\infty$. The second term is bounded by 
$2(\lambda^\ast)^2$ times
\begin{align*}
 \left(\int_0^t(G(t+\delta-s)-G(t-s))ds\right)^2&\le\left(\int_t^{t+\delta}G(u)du-\int_0^\delta G(u)du\right)^2\\
 &\le\delta^2\,.
 \end{align*}
Consequently
\[\limsup_N\frac{1}{\delta}\sup_{t\le T}\P\left(\sup_{u\le \delta}\Delta^N_1(t,u)>\ep\right)\to0,\ \text{ as }\delta\to0\,.\]

{\sc Step 3. The case of $\bar R^N_1$.}
Essentially the same argument will work in the case of $\bar R^N_1$ ($G^c$ was decreasing, $\Phi$ is increasing).
The details are left to the reader.
\end{proof}
\begin{remark}
A proof of Lemma \ref{le:final} can be found in \cite{PP-2020}. There the authors use the fact that the integral of $G^c(t-s)$ (resp. $\Phi(t-s)$) can be integrated by parts, since $G^c$ (resp. $\Phi$) is decreasing (resp. increasing), thus simplifying step 1 of the proof. However, the present version of step 1, which follows the same argument as Lemma \ref{lem-barfrakI}, allows to shorten step 2. 
\end{remark}

\paragraph{\bf Acknowledgement}
The authors want to thank two anonymous Referees, whose criticisms and suggestions on a first version of this work have led to significant improvements, in particular to the addition of the analysis of the stochastic model during the early phase, namely Theorem \ref{thm:early_phase}.
G. Pang was supported in part by  the US National Science Foundation grant DMS-1715875 and Army Research Office grant W911NF-17-1-0019. 

\bibliographystyle{plain}

\bibliography{Epidemic-Age,bib_raphael}

\end{document}